\theoremstyle{definition}
\newcommand{\rbb}{\mathbb{R}}
\newcommand{\C}{\mathcal{C}}
\newcommand{\la}{\langle}
\newcommand{\ra}{\rangle}
\newcommand{\xbar}{\overline{x}}
\newcommand{\vbar}{\overline{v}}
\newcommand{\xibar}{\overline{\xi}}
\newcommand{\xtil}{\widetilde{x}}
\newcommand{\xhat}{\widehat{x}}
\newcommand{\vhat}{\widehat{v}}
\newcommand{\Qtilde}{\widetilde{Q}}
\newcommand{\Qhat}{\widehat{Q}}
\newcommand{\xitilde}{\tilde{\xi}}
\newcommand{\xihat}{\widehat{\xi}}
\newcommand{\sigmatilde}{\widetilde{\sigma}}
\newcommand{\Kp}{{_{K,p_1}}}
\newcommand{\Kptwo}{{_{K,p_2}}}
\newcommand{\mi}{\wedge}
\renewcommand{\d}{\text{d}}
\newcommand{\f}{\varphi}
\newcommand{\E}{\mathbb{E}}
\renewcommand{\P}{\mathbb{P}}
\newcommand{\close}{\!\!\!}
\newcommand{\ddt}{\tfrac{\d}{\d t}}
\newcommand{\dds}{\tfrac{\d}{\d s}}
\theoremstyle{definition}
\theoremstyle{plain}
\newtheorem{theorem}{Theorem}[section]
\newtheorem{lemma}[theorem]{Lemma}
\newtheorem{assumption}[theorem]{Assumption}
\newtheorem{proposition}[theorem]{Proposition}
\theoremstyle{definition}
\newtheorem{definition}[theorem]{Definition}
\newtheorem{remark}[theorem]{Remark}
\numberwithin{equation}{section}
\crefname{algorithm}{Algorithm}{Algorithms}
\crefname{assumption}{Assumption}{Assumptions}
\crefname{lemma}{Lemma}{Lemmas}
\crefname{theorem}{Theorem}{Theorems}
\crefname{remark}{Remark}{Remarks}
\crefname{corollary}{Corollary}{Corollaries}
\crefname{figure}{Fig.}{Figures}
\crefname{section}{Section}{Sections}
\crefname{proposition}{Proposition}{Propositions}
\crefname{definition}{Definition}{Definitions}
\author{Hung D.~Nguyen$^1$ and Anand U. Oza$^2$}
\address{$^1$ Department of Mathematics, University of Tennessee, Knoxville, Tennessee, USA}
\address{$^2$ Department of Mathematical Sciences, New Jersey Institute of Technology, New Jersey, USA}
\begin{document}

\title{The invariant measure of a walking droplet in hydrodynamic pilot--wave theory}
\maketitle

\begin{abstract}
We study the long time statistics of a walker in a hydrodynamic pilot-wave system, which is a stochastic Langevin dynamics with an external potential and memory kernel. While prior experiments and numerical simulations have indicated that the system may reach a statistically steady state, its long-time behavior has not been studied rigorously. For a broad class of external potentials and pilot-wave forces, we construct the solutions as a dynamics evolving on suitable path spaces. Then, under the assumption that the pilot-wave force is dominated by the potential, we demonstrate that the walker possesses a unique statistical steady state. We conclude by presenting an example of such an invariant measure, as obtained from a numerical simulation of a walker in a harmonic potential. 
\end{abstract}

\section{Introduction}\label{Sec:Intro}

In 2005, Yves Couder and Emmanuel Fort discovered that an oil droplet may self-propel (or ``walk") while bouncing on the surface of a vertically vibrating bath of the same fluid~\cite{Couder2005a,Couder2006}. The so-called ``walker" is comprised of the droplet and its guiding or ``pilot" wave. The pilot wave is created by the droplet's impacts on the bath surface, and in turn the droplet receives a propulsive force from the pilot wave during its impact. The coupling between the droplet and its associated wave field leads to behavior reminiscent of that observed in the microscopic quantum realm. Specifically, experiments have demonstrated analogs of tunneling~\cite{Eddi2009b,Tadrist2019}, the quantum corral~\cite{Harris2013a,Saenz2018,Cristea2018}, the quantum mirage~\cite{Saenz2018}, Landau levels~\cite{Fort2010,Harris2014a}, Friedel oscillations~\cite{Saenz2019a}, Zeeman splitting~\cite{Eddi2012}, and the quantum harmonic oscillator~\cite{Perrard2014,Perrard2014a,Perrard2018}. Recent review articles~\cite{Bush2015a,Bush2018Chaos,BushOzaROPP} have discussed the potential and limitations of the walker system as a hydrodynamic analog of quantum systems. 

A number of theoretical models for this hydrodynamic pilot-wave system have been developed, with varying degrees of complexity~\cite{Turton2018,BushOzaROPP}. This paper will be concerned with the so-called ``stroboscopic model"~\cite{Oza2013}, which describes the horizontal dynamics of a droplet with position and velocity $(x(t),v(t))\in\rbb^2$ in the presence of an external potential $U$. In its dimensionless form, the trajectory equation reads
\begin{align}
\mathrm{d}x(t)&=v\,\mathrm{d}t,\nonumber \\
\kappa\,\mathrm{d}v(t)&=-v\,\mathrm{d}t-U^{\prime}(x(t))\,\mathrm{d}t+\alpha\int_{-\infty}^t\close H(x(t)-x(s))K(t-s)\,\mathrm{d}s\,\mathrm{d}t+\sigma\,\mathrm{d}W(t), \label{StrobEq}
\end{align}
where $H(x)=\mathrm{J}_1(x)$ is the Bessel function of the first kind of order one, $K(t)=\mathrm{e}^{-t}$, $\kappa>0$ the dimensionless droplet mass, $\sigma\ge 0$ the noise strength and $W$ a standard Brownian motion. Equation~\eqref{StrobEq} posits that the droplet moves in response to four forces: a drag force proportional to its velocity $v$, an external force $-U^{\prime}$, a pilot-wave force proportional to $\alpha$, and a stochastic force proportional to $\sigma$. The pilot-wave force is proportional to $-h^{\prime}$, the slope of the interface at the droplet's position. The interface height $h$ is a sum of the standing waves generated by the droplet during each impact. These waves are subthreshold Faraday waves~\cite{Faraday1831} and oscillate at the same frequency that the drop bounces, as is typically the case in experiments~\cite{Protiere2005,Molacek2013b,Wind2013}. In the high-frequency limit relevant to the experiments, in which the bouncing period is small relative to the timescale of the walker's horizontal motion, the aforementioned sum may be replaced by the integral shown in Eq.~\eqref{StrobEq}. The kernel is comprised of the functions $H$ and $K$, whose functional forms were originally derived by Mol\'{a}\v{c}ek \& Bush~\cite{Molacek2013b}. More sophisticated models for $H$ that incorporate the observed far-field decay of the wave field have since been developed~\cite{Couchman2019,Thomson2019,Thomson2020}. We assume for the sake of simplicity that the noise strength $\sigma$ is independent of both the droplet position $x$ and velocity $v$.

The stroboscopic model~\eqref{StrobEq} without stochastic forcing ($\sigma=0$) has been used to model free walkers~\cite{Oza2013,Durey2020}; walkers in a rotating frame~\cite{Oza2014a,Oza2014b}; walkers in linear~\cite{Valani2022}, quadratic~\cite{Labousse2014a,Labousse2016a,Perrard2018,Budanur2019}, quartic~\cite{Montes2021} and Bessel~\cite{Tambasco2018a} potentials; and pairs~\cite{Valani2018}, rings~\cite{Couchman2020,Thomson2019,Thomson2020,Thomson2021}, chains~\cite{Barnes2020} and lattices~\cite{Couchman2022} of droplets.  The walker's ``path memory"~\cite{Fort2010} is a key feature of Eq.~\eqref{StrobEq}: the pilot-wave force on the walker at a given time is influenced by the walker's entire past, with the near past having a larger influence than the far owing to the exponential decay of $K$.

A recent review article~\cite{Rahman2020b} has discussed the mechanisms by which a walker's dynamics may become chaotic, and studies have characterized the long-time statistical properties of a walker in the chaotic regime. Specifically, an experimental study~\cite{Harris2014a} of a walker in a rotating frame showed that its trajectory is characterized by chaotic jumps between unstable circular orbits. The probability distribution of the walker's radius of curvature thus converges to a peaked multimodal form in the long-time limit, a finding that was corroborated by a numerical study using the stroboscopic model~\cite{Oza2014b}. Experimental~\cite{Perrard2014} and numerical~\cite{Kurianski2017,Durey2017} studies of a walker in a harmonic potential similarly revealed that, in the long-time limit, the trajectories exhibit a quantization in their radius and angular momentum. Studies  of a walker in circular~\cite{Harris2013a,Gilet2016,Cristea2018,Rahman2018} and elliptical~\cite{Saenz2018} ``corral" geometries have shown that the long-time statistical behavior of the walker's position is related to the eigenmodes of the domain. Prior studies have also established a link between the walker's position probability density and the time-averaged pilot-wave field~\cite{Durey2018,Tambasco2018a}, and have shown that persistent oscillations in the walker's speed lead to multimodal probability distributions with distinct peaks~\cite{Saenz2019a,Durey2020}. While chaotic dynamics is the mechanism that generates coherent multimodal statistics in all of the aforementioned studies, there has not yet been an investigation into the role of stochastic forcing ($\sigma\neq 0$ in Eq.~\eqref{StrobEq}) on the walker dynamics. Moreover, it has been an open question as to whether Eq.~\eqref{StrobEq} admits an invariant measure. Therefore, the main goal of this paper is to rigorously study the long time behavior of~\eqref{StrobEq}. More specifically, we prove that~\eqref{StrobEq} admits a unique stationary distribution, assuming a general set of conditions on the functions $U$, $H$ and $K$.

We note that without the memory term ($\alpha=0$), Eq.~\eqref{StrobEq} is reduced to the Langevin equation
\begin{align}
\mathrm{d}x(t)&=v\,\mathrm{d}t,\nonumber \\
\kappa\,\mathrm{d}v(t)&=-v\,\mathrm{d}t-U^{\prime}(x(t))\,\mathrm{d}t+\sigma\,\mathrm{d}W(t), \label{eqn:Langevin}
\end{align}
whose asymptotic behavior is well-understood. In particular,~\eqref{eqn:Langevin} naturally possesses a Markovian structure on $\rbb^2$, which is amenable to analysis. Furthermore, for a broad class of potentials $U$, it can be shown that \eqref{eqn:Langevin} admits a unique invariant probability measure and that the system is exponentially attracted toward equilibrium \cite{herzog2017ergodicity,mattingly2012geometric,
mattingly2002ergodicity,pavliotis2014stochastic}. On the other hand, due to the presence of past information, the dynamics $(x(t),v(t))$ of~\eqref{StrobEq} itself is not really a Markov process. To circumvent this difficulty, we will draw upon the framework of \cite{bakhtin2005stationary,herzog2023gibbsian,
ito1964stationary,mattingly2002ergodicity}, which dealt with the same issue, to construct the dynamics of~\eqref{StrobEq} on suitable path spaces. More specifically, given an initial trajectory $(x_0,v_0)\in C((-\infty,0];\rbb^2)$, we first evolve~\eqref{StrobEq} on the time interval $[0,t]$ to obtain a path $\big(x(\cdot),v(\cdot)\big)$ on $(-\infty,t]$. Then, letting $\theta_t:C((-\infty,t];\rbb^2)\to C((-\infty,0];\rbb^2)$ be the shift map defined as
\begin{align*}
\big(\theta_t f\big)(r)=\theta_t f(r)=f(t+r),\quad r\le 0,
\end{align*} 
we observe that $\theta_t\big(x(\cdot),v(\cdot)\big)$ again lives in $C((-\infty,0];\rbb^2)$. Consequently, this induces a Markov semi-flow on $C((-\infty,0];\rbb^2)$, which allows for the use of asymptotic analysis to investigate statistically steady states. In Section \ref{sec:main-result}, we will discuss the construction of the solution to~\eqref{StrobEq} in more detail.

Historically, stochastic differential equations (SDEs) with infinite past were studied as early as in the seminal work of It\^o and Nisio, 1964 \cite{ito1964stationary}. Motivated by the approach developed in~\cite{ito1964stationary}, stochastic dissipative PDEs such as the Navier-Stokes equation and Ginzburg-Landau equation were considered in the context of memory~\cite{weinan2002gibbsian,weinan2001gibbsian}. Making use of a strategy similar to that in \cite{ito1964stationary}, the existence and uniqueness of stationary solutions of these specific equations were established. Later on, a more general method to study invariant structures of SDEs with memory was developed in \cite{bakhtin2005stationary}. For the analysis of equilibrium of other stochastic dynamics in infinite-dimensional spaces, we refer the reader to \cite{bonaccorsi2012asymptotic,bonaccorsi2004large,bonaccorsi2006infinite,
clement1996some,clement1997white,clement1998white,nguyen2022ergodicity}. Related to~\eqref{StrobEq} in finite-dimensional settings, the generalized Langevin equation (GLE) was introduced in \cite{mori1965continued} and popularized in \cite{zwanzig2001nonequilibrium}. In particular, statistically steady states of the GLE were explored in many papers \cite{glatt2020generalized,herzog2023gibbsian,ottobre2011asymptotic}. The significant differences from~\eqref{StrobEq} are that the GLE assumes the so called fluctuation-dissipation relationship \cite{mori1965continued,zwanzig2001nonequilibrium}, and that it involves an integral convolution with the velocity instead of the displacement.

Turning back to~\eqref{StrobEq}, our first main result is the existence of an invariant probability measure $\mu$, cf. Theorem~\ref{thm:existence}. The approach that we employ is drawn from the argument in \cite{ito1964stationary} via the classical Krylov-Bogoliubov Theorem. More specifically, under the assumptions that the kernel $K$ has exponential decay and that the potential $U$ dominates the pilot-wave force $H$, cf. Remark \ref{remark:U}, we are able to establish suitable moment bounds of the solution. Then, by a compactness argument, the solution is shown to converge to at least one steady state. It is worth mentioning that the existence proof relies heavily on the exponential decay of the memory kernel~\cite{Eddi2011a,Molacek2013b}. Furthermore, as a consequence of the energy estimates obtained in the proof, a typical stationary path must have moderate growth. This property is used to prove the second main result of the paper concerning the uniqueness of $\mu$, cf. Theorem~\ref{thm:unique}. The proof of uniqueness employs a coupling argument asserting that, starting from two distinct initial paths, the solutions always converge to the same point, thereby establishing that $\mu$ is unique \cite{bakhtin2005stationary,glatt2017unique,
hairer2011asymptotic,mattingly2002exponential}. While the main ingredient for the existence proof is the well-known Krylov-Bogoliubov Theorem, the uniqueness proof makes use of Girsanov's Theorem, which ensures that any coupling can be
accomplished under suitable changes of variables. 

We note that, while the existence and uniqueness of an invariant measure in a SDE with memory were established in~\cite{bakhtin2005stationary}, the existence of a Lyapunov function was assumed. In this work we effectively construct a Lyapunov function and thereby obtain ergodicity results. For the sake of simplicity, we restrict our attention to one-dimensional dynamics, $x(t)\in\mathbb{R}$ and $v(t)\in\mathbb{R}$, but expect that analogous results should hold in higher dimensions. Finally, we note that in this work, we adopt the It\^o approach, which was previously employed in \cite{bakhtin2005stationary,weinan2002gibbsian,
weinan2001gibbsian,herzog2023gibbsian}, so as to allow for the convenience of both using It\^o's formula and performing moment estimates on Martingale processes.

The rest of the paper is organized as follows. In Section~\ref{sec:main-result}, we introduce the relevant function spaces as well as the assumptions. We also state the main results of the paper, including Theorem~\ref{thm:existence} and Theorem \ref{thm:unique}, giving the existence and uniqueness of an invariant probability measure. In Section~\ref{sec:moment-bound}, we collect useful moment bounds on the solutions. In Section~\ref{sec:existence-uniqueness}, we address the asymptotic behavior of~\eqref{StrobEq} and use the energy estimates collected in Section~\ref{sec:moment-bound} to prove the main results. In the Appendix, following the classical theory of SDEs, we explicitly construct the solutions of~\eqref{StrobEq}.

\section{Assumptions and main results} \label{sec:main-result}
We start by discussing the well-posedness of~\eqref{StrobEq}. Since the parameters $\kappa,\,\alpha,\,\sigma$ do not affect the analysis, we set $\kappa=\alpha=\sigma=1$ for the sake of simplicity and reduce~\eqref{StrobEq} to
\begin{align}
\d x(t)&=v(t)\d t,\notag\\
\d v(t)& = -v(t)\d t-U'(x(t))\d t-\int_{-\infty}^t\close H(x(t)-x(s))K(t-s)\d s\d t+\d W(t).\label{eqn:droplet}
\end{align}
To construct a phase space for~\eqref{eqn:droplet}, we denote by $\C(-\infty,0]$ the set of past trajectories in $\rbb^2$, i.e.,
\begin{align*}
\C(-\infty,0]=C((-\infty,0];\rbb^2).
\end{align*}
The topology in $\C(-\infty,0]$ is induced by the Prokhorov metric \cite{bakhtin2005stationary,ito1964stationary}
\begin{equation*}
d(f_1,f_2)=\sum_{n\ge 1}2^{-n}\frac{\|f_1-f_2\|_n}{1+\|f_1-f_2\|_n},
\end{equation*}
where
\begin{align*}
\|f\|_n=\max_{s\in[-n,0]}\|f(s)\|
\end{align*}
and $\|\cdot\|$ denotes the Euclidean norm in $\rbb^2$. More generally, for $-\infty\le t_1<t_2\le \infty$, we denote by $\C(t_1,t_2)$, the set of trajectories in $(t_1,t_2)$ given by
\begin{align*}
\C(t_1,t_2)=C((t_1,t_2);\rbb^2).
\end{align*}
In particular, the set of future paths is given by
\begin{align*}
\C[0,\infty)=C([0,\infty);\rbb^2).
\end{align*}
Given a path $\xi=(x,v)\in\C(-\infty,\infty)$, we denote by $\pi_x$ and $\pi_v$ respectively the projections onto the marginal path spaces, namely,
\begin{align*}
\pi_x\xi(\cdot)=x(\cdot)\quad\text{and}\quad \pi_v\xi(\cdot)=v(\cdot).
\end{align*}
Moreover, given a set $A\subset \rbb$, the projection of $\xi$ onto $C(A;\rbb^2)$ is given by
\begin{align*}
\pi_A\xi(s)=\xi(s),\quad s\in A.
\end{align*}
As mentioned in Section~\ref{Sec:Intro}, we will also make use of $\theta_t$, the shift map on the spaces of trajectories, defined as
\begin{align*}
\theta_t\xi(s)=\xi(t+s), \quad s\in\rbb.
\end{align*}
Throughout, we will fix a stochastic basis $\mathcal{S}=\left(\Omega,\mathcal{F},\P,\{\mathcal{F}_t\}_{t\geq 0},W\right)$ satisfying the usual conditions~ \cite{karatzas2012brownian}, i.e., the set $\Omega$ is endowed with a probability measure $\P$ and a filtration of sigma-algebras $\{\mathcal{F}_t:t\in\rbb\}$ generated by $W$.

 Having introduced the needed spaces, we are now in a position to define a strong solution of~\eqref{eqn:droplet} \cite{bakhtin2005stationary,ito1964stationary,herzog2023gibbsian}.
\begin{definition} \label{def:solution}
Given an initial condition $\xi_0\in \C(-\infty,0]$, a process $(x,v)=\xi_{(-\infty,T]}\in \C(-\infty,T]$ is called a solution of~\eqref{eqn:droplet} if the following holds:

1. For all $s\le 0$, $(x(s),v(s))=\xi_0(s)$; 

2. The process $(x(t),v(t))$ is adapted to the filtration $\{\mathcal{F}_t\}$; and

3. $\P$-almost surely (a.s.), for all $0\le t_1\le t_2\le T$
\begin{align*}
x(t_2)-x(t_1)&=\int_{t_1}^{t_2}v(r)\d r,\\
v(t_2)-v(t_1)&=\int_{t_1}^{t_2}\close -v(r)-U'(x(r))-\int_{-\infty}^r\close H(x(r)-x(s))K(r-s)\d s\d r+ W(t_2)-W(t_1).
\end{align*}
\end{definition}

Next, we introduce the main assumptions that will be employed throughout the analysis. Concerning the memory kernel $K$, we impose the following assumption, which is characteristic of some prior theoretical models of walking droplets~\cite{Eddi2011a,Molacek2013b}:

\begin{assumption} \label{cond:K}
$K:[0,\infty)\to\rbb^+$ satisfies
\begin{align*}
K'(t)\le -\delta K(t),\quad t\ge 0,
\end{align*}
for some $\delta>0$.
\end{assumption}

\begin{remark}  We note that using Gronwall's inequality, Assumption~\ref{cond:K} implies that the kernel decays exponentially fast, i.e.,
\begin{align*}
K(t)\le K(0)e^{-\delta t},\quad t\ge 0.
\end{align*}
However, the differential inequality in Assumption \ref{cond:K} is slightly more general than the above exponential decay, and will be employed later in Section \ref{sec:moment-bound}, see e.g., estimate \eqref{ineq:|eta|_K}. 
\end{remark}

With regards to the pilot-wave force $H$, we assume $H$ has polynomial growth as follows: 

\begin{assumption}\label{cond:H} $H\in C^1(\rbb)$ satisfies

\begin{align} \label{cond:H:1}
\max\{|H'(x)|,|H(x)|\}\le a_H(|x|^{p_1}+1),\quad x\in\rbb,
\end{align}
for some constants $a_H>0$ and $p_1\ge 0$.
\end{assumption}

\begin{remark} \label{remark:H}

1. One particular example of the pilot-wave nonlinear term $H$ is $\mathrm{J}_1$, the Bessel function of the first kind of order one, which has been used in theoretical models of walking droplets~\cite{Molacek2013b}. In this case, condition~\eqref{cond:H:1} is satisfied with $p_1=0$ since $\mathrm{J}_1$ and $\mathrm{J}_1'$ are actually bounded. In general, the functions $H$ and $H'$ as in Assumption~\ref{cond:H} need not be.

2. The bound on $H'$ as in~\eqref{cond:H:1} is only employed to establish the well-posedness of~\eqref{eqn:droplet}.
\end{remark}

In order to establish moment bounds on the solutions, the potential $U$ is required to dominate the pilot-wave force $H$. We thus make the following assumption about $U$.

\begin{assumption} \label{cond:U} The potential $U\in C^1(\rbb;[1,\infty))$ satisfies
\begin{equation} \label{cond:U:3}
|U'(x)| \le a_0(U(x)^{n_0}+1),\quad x\in\rbb,
\end{equation}
for positive constants $a_0,n_0$. Furthermore, 
\begin{align} \label{cond:U:2}
xU'(x)\ge a_1 U(x)-a_2,\quad x\in\rbb,
\end{align}
and
\begin{align} \label{cond:U:1}
U(x)\ge a_3|x|^{2\max\{1,p_1+\varepsilon_1\}},\quad x\in\rbb,
\end{align} 
for some positive constants $\varepsilon_1$ and $a_i$, $i=1,2,3$,  where $p_1$ is as in Assumption~\ref{cond:H}.
\end{assumption}

\begin{remark}\label{remark:U}
We note that conditions~\eqref{cond:U:3}-\eqref{cond:U:2} are standard and can be found in the literature~\cite{glatt2020generalized,
mattingly2012geometric,mattingly2002ergodicity}. In view of condition~\eqref{cond:H:1} on the growth rate of $H$, condition~\eqref{cond:U:1} implies that the potential $U(x)$ dominates both $x^2$ and $H(x)^2$, i.e.,
\begin{align*}
c(U(x)+1)\ge \max\{x^2,H(x)^2\},\quad x\in\rbb,
\end{align*}
for some positive constant $c$ independent of $x$. In particular, when $H=\mathrm{J}_1$, cf. Remark~\ref{remark:H}, the potential $U$ can be chosen as a quadratic function, e.g., $U(x)=x^2+1$, a situation that has been considered in experiments~\cite{Perrard2014,Perrard2014a}.
\end{remark}

In order to prove the existence of a solution, it is necessary to ensure the convergence of the memory integral in Definition~\ref{def:solution}. For this purpose, we introduce the space $\C_{K,q}(-\infty,0]$, $q\in\rbb$, a subset of $\C(-\infty,0]$, given by
\begin{equation} \label{form:C_K}
\C_{K,q}(-\infty,0] =\Big\{\xi\in \C(-\infty,0]: \|\pi_x\xi\|_{_{K,q}}\overset{\text{def}}{=}\int_{-\infty}^0\close |\pi_x\xi(s)|^{q}K(-s)\d s <\infty\Big\}.
\end{equation}

Provided the initial condition $\xi_0$ has moderate growth rate, we are able to obtain a unique solution of~\eqref{eqn:droplet}. This is summarized precisely in the following proposition.

\begin{proposition}\label{prop:well-posed}
Suppose that Assumptions~\ref{cond:K},~\ref{cond:H} and~\ref{cond:U} hold. Then, for all initial conditions $\xi_0=(x_0,v_0)\in \C_{K,p_1}(-\infty,0]$ where $p_1$ is as in~\eqref{cond:H:1}, there exists a unique solution of~\eqref{eqn:droplet} in the sense of Definition~\ref{def:solution}.
\end{proposition}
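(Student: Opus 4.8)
The plan is to construct the solution by a standard Picard iteration on successive finite time intervals, with the main novelty being control of the memory integral via the weighted space $\C_{K,p_1}(-\infty,0]$. First I would reduce the problem to a fixed-point problem on $C([0,T];\rbb^2)$ for $T$ small. Given the initial history $\xi_0=(x_0,v_0)\in\C_{K,p_1}(-\infty,0]$, for a candidate future path $(x,v)\in C([0,T];\rbb^2)$ define the memory forcing
\begin{align*}
G(t)=\int_{-\infty}^t H(x(t)-x(s))K(t-s)\,\d s,\qquad t\in[0,T],
\end{align*}
where $x(s)=x_0(s)$ for $s\le 0$. The key point is that this integral is well-defined and continuous in $t$: splitting at $s=0$, the piece over $[0,t]$ is a finite integral of a continuous function, while for the piece over $(-\infty,0]$ one uses $|H(x(t)-x(s))|\le a_H(|x(t)-x(s)|^{p_1}+1)\le c(|x(t)|^{p_1}+|x_0(s)|^{p_1}+1)$ together with the definition of $\|\pi_x\xi_0\|_{_{K,p_1}}<\infty$ and $\int_{-\infty}^0 K(-s)\,\d s<\infty$ (finite by Assumption~\ref{cond:K}); this gives an absolutely convergent bound, and continuity in $t$ follows from dominated convergence (using continuity of $H$ and the uniform-in-$t$ domination $K(t-s)\le K(0)e^{-\delta(-s)}$ on the tail for $t$ in a compact set). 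Thus the map $(x,v)\mapsto G$ sends $C([0,T];\rbb^2)$ into $C([0,T];\rbb)$.

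Next I would set up the integral formulation. A solution on $[0,T]$ is a fixed point of the map $\Phi$ that sends $(x,v)$ to $(\tilde x,\tilde v)$ with
\begin{align*}
\tilde x(t)&=x_0(0)+\int_0^t v(r)\,\d r,\\
\tilde v(t)&=v_0(0)+\int_0^t\bigl(-v(r)-U'(x(r))-G(r)\bigr)\,\d r+W(t).
\end{align*}
Because $U'$ has at most polynomial growth (Assumption~\ref{cond:U}, \eqref{cond:U:3}) and $H,H'$ have polynomial growth (Assumption~\ref{cond:H}), the drift is locally Lipschitz in $(x,v)$; the only term requiring care is $G$, whose dependence on $x$ is through $H(x(t)-x(s))$, and using the bound on $H'$ from \eqref{cond:H:1} one checks that $G$ is locally Lipschitz as a function of $x\in C([0,T];\rbb^2)$ with a constant depending on $\sup_{[0,T]}|x|$ and on $\|\pi_x\xi_0\|_{_{K,p_1}}$. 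A standard contraction-mapping argument on a small ball of $C([0,T];\rbb^2)$, for $T$ sufficiently small, then yields a unique local solution; adaptedness to $\{\mathcal{F}_t\}$ is preserved throughout the iteration since each iterate is a continuous functional of $W|_{[0,t]}$ and the (deterministic) history.

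Finally I would globalize. To rule out blow-up in finite time and extend the local solution to all of $[0,T]$ for arbitrary $T$, I would derive an a priori bound: test the $v$-equation against $v$ (or rather apply It\^o's formula to $U(x)+\tfrac12 v^2$), using \eqref{cond:U:2} and Remark~\ref{remark:U} (so that $c(U(x)+1)\ge \max\{x^2,H(x)^2\}$) to absorb the memory term $v\,G$ via Young's inequality and the exponential decay of $K$, controlling $\int_0^t|G(r)|^2\,\d r$ in terms of $\sup_{r\le t}U(x(r))$ plus the finite quantity $\int_{-\infty}^0|x_0(s)|^{2p_1}K(-s)\,\d s$. This gives a Lyapunov-type estimate preventing explosion, so the local solution extends; concatenating the pieces produces a solution on $(-\infty,T]$ in the sense of Definition~\ref{def:solution}, and uniqueness on each interval patches to global uniqueness. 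I would refer to the Appendix (the paper defers the explicit construction there) for the routine details. The main obstacle is not the fixed-point argument itself but verifying that the memory functional $G$ is well-defined, continuous, and locally Lipschitz on the path space — i.e., that the weighted-growth condition $\xi_0\in\C_{K,p_1}(-\infty,0]$ is exactly what is needed to close these estimates, and that it is propagated forward by the dynamics (the solution restricted to $(-\infty,t]$, shifted, again lies in a space of the same type) so the construction can indeed be iterated over successive intervals.
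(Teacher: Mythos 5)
Your route is genuinely different from the paper's, and in outline it is sound, but there is one slip worth flagging.

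The paper (in its Appendix) does not run a short-time contraction. Instead it performs a double approximation: it first truncates the memory integral at $-N$ to obtain a finite-memory SDE~\eqref{eqn:droplet:int_(-N)^t}, then applies the smooth cutoff $\theta_R$ to the nonlinearities to get a globally Lipschitz system~\eqref{eqn:droplet:int_(-N)^t:U'_R}, which has a classical global solution $(x_N^R,v_N^R)$. Passing $R\to\infty$ via a Lyapunov estimate (Lemma~\ref{lem:well-posed:truncate}) shows the stopping times $\sigma_N^R\to\infty$, so each finite-memory system is globally well-posed; then a Cauchy estimate in $N$ (Lemma~\ref{lem:well-posed:Cauchy}), driven by the tail integrability of $K$ and $\|x_0\|_{K,p_1}<\infty$, identifies the limit as the unique solution of~\eqref{eqn:droplet}. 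Your proposal instead fixes $\omega$, treats $W|_{[0,T]}$ as an input, and runs a pathwise Banach fixed point on a small ball of $C([0,T];\rbb^2)$ using the local Lipschitz bounds coming from Assumptions~\ref{cond:H}--\ref{cond:U}, then globalizes with the same Lyapunov functional $\Phi=U+\tfrac12v^2$. Both are legitimate: yours is more direct and avoids introducing the family $(x_N^R,v_N^R)$, while the paper's approximation scheme never has to worry about $\omega$-dependent contraction times or explicit local-to-global patching, and it isolates the role of the memory tail cleanly in the Cauchy step.

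One concrete technical point needs repair. In the globalization step you propose to control $\int_0^t|G(r)|^2\,\d r$ in terms of ``the finite quantity $\int_{-\infty}^0|x_0(s)|^{2p_1}K(-s)\,\d s$.'' Under the stated hypothesis $\xi_0\in\C_{K,p_1}(-\infty,0]$ only $\int_{-\infty}^0|x_0(s)|^{p_1}K(-s)\,\d s=\|x_0\|_{K,p_1}$ is assumed finite; $\int_{-\infty}^0|x_0(s)|^{2p_1}K(-s)\,\d s$ need not be. This quantity appears if you bound $G^2$ by pushing the square inside the $\d s$-integral via Jensen's inequality, which is the natural first move but is exactly the wrong one here. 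The paper instead applies the growth bound on $H$ \emph{before} squaring, obtaining $|G(\ell)|\le c\big(1+|x(\ell)|^{p_1}+\|x_0\|_{K,p_1}\big)$ and hence $G^2\le c\big(1+|x(\ell)|^{2p_1}+\|x_0\|_{K,p_1}^2\big)$, which involves only the square $\|x_0\|_{K,p_1}^2$ of the assumed-finite norm. Reordering the estimate this way closes your Lyapunov bound without strengthening the hypothesis on $\xi_0$; as written, however, the step does not follow from what is assumed.

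Two small remarks. First, the local Lipschitz property of $U'$ used in the contraction (and, for that matter, in the paper's Lemma~\ref{lem:well-posed:truncate:Lipschitz}, which asserts~\eqref{eqn:droplet:int_(-N)^t:U'_R} is Lipschitz) is not literally implied by $U\in C^1$ with polynomial growth of $U'$; both proofs implicitly assume $U'$ is locally Lipschitz, and you are no worse off than the paper here. Second, your observation that the shifted history $\theta_t\xi_{(-\infty,t]}$ again lies in $\C_{K,p_1}(-\infty,0]$ (using monotonicity of $K$ on the tail and continuity of the solution on $[0,t]$) is correct and is indeed what makes the interval-by-interval extension consistent.
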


\begin{remark} In order to establish Proposition \ref{prop:well-posed} as well as the main ergodicity results below, we will apply It\^o's formula to equation \eqref{eqn:droplet}. Although \eqref{eqn:droplet} is not a Markovian diffusion in the classical sense, it it still an It\^o process as its coefficients at time $t$ are still adapted to the past of $W$. Following \cite{bakhtin2005stationary,weinan2002gibbsian,
hairer2011asymptotic,herzog2023gibbsian,
mattingly2002exponential}, equation \eqref{eqn:droplet} belongs to the class of so called Gibbsian SDEs, of which the infinitestimal Gibbsian generators are well-defined. This in turn allows for the purpose of performing It\^o's formula. See \cite[Section 3]{herzog2023gibbsian} for a further discussion of this point.
\end{remark}

The well-posedness result in Proposition~\ref{prop:well-posed} guarantees that a solution uniquely exists and does not explode in finite time. In other words, for each $\xi_0\in \C_{K,p_1}(-\infty,0]$, we obtain the well-defined solution map $\xi_{(-\infty,\infty)}$. In Appendix~\ref{sec:well-posed}, we explicitly construct the solutions of~\eqref{eqn:droplet} and thus prove Proposition~\ref{prop:well-posed}. In turn, the solution map $\xi_{(-\infty,\infty)}$ induces a family of kernels on infinite future paths, denoted by $Q_{[0,\infty)}$ and defined as
\begin{equation} \label{form:Q_[0,infty)}
Q_{[0,\infty)}(\xi_0,A)=\P(\pi_{[0,\infty)}\xi_{(-\infty,\infty)}\in A|\xi_{(-\infty,0]}=\xi_0),
\end{equation}
for each Borel set $A\subset \C[0,\infty)=C([0,\infty);\rbb^2)$. On the other hand, the Markov transition probability $P_t$ on $\C_{K,p_1}(-\infty,0]$ associated with the solution $\xi_{(-\infty,t]}$ is given by
\begin{equation}
P_t(\xi_0,B):=\P(\theta_t\xi_{(-\infty,t]}\in B|\xi_{(-\infty,0]}=\xi_0),
\end{equation}
defined for $t\ge 0$ and Borel sets $B\subset \C(-\infty,0]$. 

Next, we turn to the large-time asymptotics of~\eqref{eqn:droplet}. Recall that a probability measure $\mu$ is said to be invariant for $P_t$ if the push-forward measure of $\mu$ under $P_t$ is the same for all $t > 0$, i.e., $P_t\mu\sim\mu$, where
\begin{align*}
P_t\mu(B) = \P(\theta_t\xi_{(-\infty,t]}\in B|\xi_{(-\infty,0]}\sim\mu)=\int_{\C(-\infty,0]}\close\close \P(\theta_t\xi_{(-\infty,t]}\in B|\xi_{(-\infty,0]}=\xi_0)\mu(\d\xi_0).
\end{align*}
To control the growth rate of a typical trajectory in $\mu$, we introduce the following spaces for $\varrho>0$:
\begin{equation} \label{form:C_rho}
\C_\varrho(-\infty,0]=\Big\{ \xi\in C((-\infty,0];\rbb^2):\sup_{s\le 0}\frac{|\pi_x\xi(s)|+|\pi_v\xi(s)|}{1+|s|^\varrho}<\infty \Big\}
\end{equation}
and
\begin{equation} \label{form:C_rho:future}
\C_\varrho[0,\infty)=\Big\{ \xi\in C([0,\infty);\rbb^2):\sup_{t\ge 0}\frac{|\pi_x\xi(t)|+|\pi_v\xi(t)|}{1+|t|^\varrho}<\infty \Big\}.
\end{equation}

Our first main result is the existence of an invariant probability measure $\mu$ whose support is concentrated in $\C_\varrho(-\infty,0]$.

\begin{theorem} \label{thm:existence}
Under the hypotheses of Proposition~\ref{prop:well-posed}, there exists an invariant measure $\mu$ for~\eqref{eqn:droplet}. Furthermore, for all $\varrho>0$,
\begin{equation}\label{ineq:mu(C_rho)=1}
\mu(\C_\varrho(-\infty,0])=1.
\end{equation}
\end{theorem}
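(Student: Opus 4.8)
The plan is to invoke the classical Krylov--Bogoliubov procedure, so the heart of the argument is a uniform-in-time moment bound for the solution started from a fixed initial path. First I would fix a deterministic initial condition, say $\xi_0 \equiv (x_0,v_0)$ constant (or more generally any point of $\C_{K,p_1}(-\infty,0]$ with enough integrability), and run the dynamics to obtain $\xi_{(-\infty,t]}$ for all $t\ge 0$. I would then introduce a Lyapunov-type functional that captures both the mechanical energy $\tfrac12 v^2 + U(x)$ and the memory contribution. Because of the convolution term, the natural auxiliary variable is the ``partial wave force'' $\eta(t) = \int_{-\infty}^t H(x(t)-x(s))K(t-s)\,\d s$, or better its components; using Assumption~\ref{cond:K} ($K'\le -\delta K$) one gets a differential inequality for quantities like $\int_{-\infty}^t |x(s)|^{q} K(t-s)\,\d s$ that decay at rate $\delta$ and are fed by the current position. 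The candidate Lyapunov function is thus something of the form
\begin{equation*}
\Psi(t) = \tfrac12 v(t)^2 + U(x(t)) + \beta \int_{-\infty}^t \big(|x(t)-x(s)|^{2} + 1\big)\, K(t-s)\, \d s
\end{equation*}
for a small constant $\beta>0$, possibly with an extra cross term $\gamma x(t)v(t)$ to extract decay in $x$ from the $xU'(x)$ bound \eqref{cond:U:2}, exactly as in the Langevin case.

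Next I would apply It\^o's formula to $\Psi(\xi_{(-\infty,t]})$ — legitimate here because \eqref{eqn:droplet} is a Gibbsian It\^o process, per the remark after Proposition~\ref{prop:well-posed}. The drift terms produce: $-v^2$ from the friction, $-U'(x)v + U'(x)v = $ cancellation of the potential with $\ddt U(x(t))$, the cross term $-\eta(t) v(t)$ from the pilot-wave force, and from the memory integral a gain term controlled by $|x(t)|^{2}$ (or $|x(t)|^{2p_1}$) minus $\delta$ times the memory functional itself. The crucial point is that, by Remark~\ref{remark:U}, condition \eqref{cond:U:1} forces $U(x)$ to dominate both $x^2$ and $H(x)^2$, so the pilot-wave cross term $-\eta v$ and the memory gain term can both be absorbed: $|\eta(t)| \lesssim \int |x(t)-x(s)|^{p_1}K + \text{const}$, and Young's inequality against the good terms $-\tfrac12 v^2$, $-\tfrac{a_1}{2}U(x)$ (from the cross term $\gamma x v$ combined with \eqref{cond:U:2}), and $-\tfrac{\delta}{2}\times(\text{memory functional})$ leaves a net dissipation. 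The outcome should be a bound of the form $\ddt \E\Psi(t) \le -c\, \E\Psi(t) + C$, hence $\sup_{t\ge 0}\E\Psi(\xi_{(-\infty,t]}) < \infty$; in fact one wants the slightly stronger pathwise/time-averaged estimate $\tfrac1T\int_0^T \E\Psi(\theta_s\xi) \,\d s \le C$ uniformly in $T$, which is what Krylov--Bogoliubov consumes.

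With the moment bound in hand, I would form the time-averaged measures $\mu_T = \tfrac1T\int_0^T P_s(\xi_0,\cdot)\,\d s$ on $\C(-\infty,0]$ and show they are tight. Tightness is where the path-space setting requires care: the uniform bound on $\E\Psi$ controls, for each fixed $n$, $\E\sup_{s\in[-n,0]}(|x(s)|+|v(s)|)$ plus a modulus-of-continuity estimate coming from the SDE (the drift is bounded in terms of $\Psi$ and the martingale part has controlled quadratic variation), so by Arzel\`a--Ascoli on each $C([-n,0];\rbb^2)$ and a diagonal argument the family $\{\mu_T\}$ is tight in the Prokhorov metric on $\C(-\infty,0]$. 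Then Krylov--Bogoliubov yields a subsequential weak limit $\mu$ that is invariant for $P_t$ — one checks the Feller property of $P_t$ along the way, which follows from continuous dependence of solutions on the initial path (again using the moment bounds to control the memory integral). Finally, for \eqref{ineq:mu(C_rho)=1}: the uniform-in-$t$ bound on $\E\sup_{s\in[-n,0]}(|x|+|v|)$ under $\mu_T$, combined with stationarity of $\mu$, gives $\E_\mu[\,|\pi_x\xi(s)|+|\pi_v\xi(s)|\,] \le C$ for every $s\le 0$ with $C$ independent of $s$ (or at worst growing subpolynomially); a Borel--Cantelli / Chebyshev argument over integer times $s=-k$ together with the continuity modulus then shows $\sup_{s\le 0}(|x(s)|+|v(s)|)/(1+|s|^\varrho) < \infty$ $\mu$-a.s.\ for every $\varrho>0$, which is exactly $\mu(\C_\varrho(-\infty,0])=1$.

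The main obstacle I anticipate is constructing the right Lyapunov functional: the pilot-wave force is not a gradient and not dissipative, so the only way to close the estimate is to borrow dissipation from the potential via \eqref{cond:U:1}--\eqref{cond:U:2}, and one must choose the weights $\beta,\gamma$ and the exponents in the memory functional (matching $p_1$ from Assumption~\ref{cond:H}) so that every bad term is strictly dominated. The second delicate point is handling the memory term inside It\^o's formula rigorously — justifying differentiation under the integral sign and the use of $K'\le -\delta K$ to get $\ddt \int_{-\infty}^t(\cdots)K(t-s)\,\d s \le (\text{boundary term at }s=t) - \delta\int_{-\infty}^t(\cdots)K(t-s)\,\d s$ — which is presumably where the generality of Assumption~\ref{cond:K} over plain exponential decay is used, as hinted near estimate \eqref{ineq:|eta|_K}.
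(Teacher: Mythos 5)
Your plan follows the paper's proof almost exactly: construct a Lyapunov functional combining the mechanical energy $\Phi(x,v)=U(x)+\tfrac12 v^2$, a small cross term $\lambda xv$, and a memory functional; run the Krylov--Bogoliubov machinery on the time-averaged laws $Q_T$ in $\Pr(\C(-\infty,0])$; and close the polynomial-growth claim \eqref{ineq:mu(C_rho)=1} by Chebyshev plus Borel--Cantelli over unit time blocks, using a $\sup$-in-time moment bound such as \eqref{ineq:moment-bound:sup_[t,t+1]}. The one substantive technical divergence is your candidate memory functional $\beta\int_{-\infty}^t(|x(t)-x(s)|^2+1)K(t-s)\,\d s$: the paper instead uses $\|\eta(t)\|\Kptwo=\int_{-\infty}^0|x(t+s)|^{p_2}K(-s)\,\d s$ with $2p_1<p_2<2\max\{1,p_1+\varepsilon_1\}$, whose transport structure $\partial_t\eta=\partial_s\eta$ gives the clean dissipation $\ddt\|\eta\|\Kptwo\le K(0)|x(t)|^{p_2}-\delta\|\eta\|\Kptwo$ with no extra $v(t)$-cross term upon differentiation, and whose tunable exponent $p_2$ lets the Young-inequality absorption of the pilot-wave term work for general $p_1$, whereas a fixed quadratic exponent would require $p_1\le 1$.
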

\begin{remark} \label{rem:mu(C_rho)=1} Equation \eqref{ineq:mu(C_rho)=1} asserts that the statistically steady states of the droplet will fluctuate slowly toward infinity, i.e., a typical trajectory of the stationary solutions can only have at most a polynomial growth of order arbitrarily close to zero.

\end{remark}

In order to prove Theorem~\ref{thm:existence}, we will employ the framework developed in \cite{bakhtin2005stationary,ito1954stationary} via the Krylov-Bogoliubov Theorem. Specifically, we first establish a moment bound of the process $\big(x(t),v(t)\big)$ that is uniform with respect to the time $t$, cf. Lemma~\ref{lem:moment-bound}. Then, by a compactness argument, a sequence of time-averaged measures is shown to be tight in $\Pr(\C(-\infty,0])$, thereby establishing the existence of $\mu$. Here, we recall that a sequence of measures $\{\nu_n\}$ is tight if for all $\varepsilon>0$, there exists a compact set $A_\varepsilon$ in $\C(-\infty,0]$ such that $\mu_n(A_\varepsilon^c)<\varepsilon$ for all $n$.

As a consequence of the moment bounds, a typical path must have moderate growth $\mu$--a.s. That is, we will show that $\mu$ must indeed concentrate in $\C_\varrho(-\infty,0]$ 
for all $\varrho>0$. The explicit argument will be carried out in Section~\ref{sec:existence-uniqueness:existence}. 

Our second main result, Theorem~\ref{thm:unique} stated below, ensures the uniqueness of such an invariant measure $\mu$.

\begin{theorem} \label{thm:unique}
Under the hypotheses of Proposition~\ref{prop:well-posed}, there exists at most one invariant measure $\mu$ for~\eqref{eqn:droplet} such that $\mu(\C_\varrho(-\infty,0])=1$, for all $\varrho>0$. 
\end{theorem}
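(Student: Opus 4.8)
The plan is to prove uniqueness via a coupling argument: given two invariant measures $\mu_1,\mu_2$, both supported on $\bigcap_{\varrho>0}\C_\varrho(-\infty,0]$, we show that solutions started from $\mu_1$-- and $\mu_2$--typical initial paths can be coupled so that they converge to each other asymptotically, forcing $\mu_1=\mu_2$. Concretely, I would build a coupling on a product space where one copy $\xi=(x,v)$ solves \eqref{eqn:droplet} driven by a Brownian motion $W$, and the second copy $\xitilde=(\xtil,\vtil)$ solves the same equation but driven by a \emph{shifted} Brownian motion $\Wtilde_t = W_t + \int_0^t g(r)\,\d r$, where the drift correction $g$ is chosen so that the two velocity (hence position) processes are steered together. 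A natural choice is a feedback control of the form $g(r) = -\lambda\big(v(r)-\vtil(r)\big) - \big(x(r)-\xtil(r)\big) + \big[\text{terms from the memory integrals}\big]$, designed so that the difference process $(\bar x,\bar v):=(x-\xtil,\,v-\vtil)$ satisfies a deterministic, dissipative ODE that drives $(\bar x,\bar v)\to 0$. Because of the infinite-memory term, $g(r)$ will also need to absorb the mismatch $\int_{-\infty}^r [H(x(r)-x(s))-H(\xtil(r)-\xtil(s))]K(r-s)\,\d s$; here Assumption~\ref{cond:H} (polynomial growth of $H,H'$) together with the $\C_\varrho$--support of the initial data and the uniform moment bounds from Section~\ref{sec:moment-bound} are what make this correction integrable and of moderate (polynomially bounded) size.

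The key steps, in order, are: (i) Fix $\mu_1,\mu_2$ invariant with full mass on every $\C_\varrho$; by a standard argument it suffices to couple two copies started from \emph{arbitrary but fixed} past paths $\xi_0\in\C_\varrho(-\infty,0]$, $\xitilde_0\in\C_\varrho(-\infty,0]$ (with the law of the coupled pair a measurable function of $(\xi_0,\xitilde_0)$), since one can then integrate against $\mu_1\otimes\mu_2$. (ii) On $[0,\infty)$, run $\xi$ under $W$ and define $\xitilde$ as the solution of the controlled equation; verify via Proposition~\ref{prop:well-posed}--type arguments that this controlled system is well posed and that the control $g$ is adapted. (iii) Show the difference $(\bar x(t),\bar v(t))$ decays, e.g.\ exponentially, as $t\to\infty$ — this is a Grönwall estimate on the energy $|\bar x(t)|^2+|\bar v(t)|^2$ once $g$ is plugged in, where the memory term is controlled using Assumption~\ref{cond:K} (the differential inequality $K'\le-\delta K$ lets one close the estimate on a weighted norm of $\bar x$, exactly as in the moment bounds of Section~\ref{sec:moment-bound}). (iv) Show $\int_0^\infty |g(r)|^2\,\d r<\infty$ $\P$--a.s. (indeed with finite expectation), using the decay from (iii), the polynomial growth of $H,H'$, and the uniform-in-time moment bounds plus the $\C_\varrho$ growth of the stationary paths; this is where $\mu(\C_\varrho)=1$ for all $\varrho>0$ is essential. (v) Apply Girsanov's Theorem: since $\int_0^\infty|g|^2\,\d r<\infty$ a.s., $\Wtilde$ is a Brownian motion under an equivalent measure $\widetilde\P$, so the coupled pair $(\xi,\xitilde)$ lives on a common probability space with the correct marginals, and the event $\{d(\theta_t\xi_{(-\infty,t]},\theta_t\xitilde_{(-\infty,t]})\to 0\}$ has positive probability. (vi) Conclude: by invariance, $\Law(\theta_t\xi_{(-\infty,t]})=\mu_1$ and $\Law(\theta_t\xitilde_{(-\infty,t]})=\mu_2$ for all $t$; since the two paths get arbitrarily close with positive probability along a subsequence, a now-routine argument (testing against bounded Lipschitz functions on $\C(-\infty,0]$ with the Prokhorov metric $d$) gives $\mu_1=\mu_2$.

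The main obstacle I expect is step (iv): proving the control cost is a.s.\ finite. The difficulty is twofold. First, the feedback control must simultaneously cancel the instantaneous mismatch of the pilot-wave forces \emph{and} not itself blow up; since $H$ and $H'$ are only assumed to have polynomial growth (Assumption~\ref{cond:H}), one must bound $|H(x(r)-x(s))-H(\xtil(r)-\xtil(s))|$ by something like $\sup_{\theta}|H'|\cdot|\bar x(r)-\bar x(s)|$ where the $\sup$ runs over an interval whose endpoints grow only like $|r|^{\varrho}+|s|^{\varrho}$ — integrable against $K(r-s)$ only because $K$ decays exponentially (Assumption~\ref{cond:K}) and $\varrho$ can be taken tiny. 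Second, one needs this bound \emph{pathwise} and integrable in $r$ over all of $[0,\infty)$; this forces a careful interplay between the exponential decay of $\bar x$ from step (iii), the exponential decay of $K$, and the at-most-polynomial growth of the individual trajectories, i.e.\ precisely the content of the energy estimates of Section~\ref{sec:moment-bound} and the support statement $\mu(\C_\varrho(-\infty,0])=1$. Handling the coupling of the past segments $\pi_{(-\infty,0]}$ (which are \emph{not} identical, only both of moderate growth) inside the memory integral — rather than assuming a common past, as is sometimes done — is the technical crux that makes this genuinely a uniqueness statement on path space rather than on $\rbb^2$.
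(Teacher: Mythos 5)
Your proposal takes a genuinely different route from the paper, and I think it has a gap that the paper's construction is specifically designed to avoid.

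The paper's argument does \emph{not} couple two arbitrary $\C_\varrho$ pasts by feedback control. Instead it proceeds via Birkhoff's Ergodic Theorem plus two lemmas: (Lemma~\ref{lem:Lebesgue}) the time-$0$ marginal $\pi_0\mu$ of any invariant measure is equivalent to Lebesgue measure on $\rbb^2$, and (Lemma~\ref{lem:Q_infty:equivalence}) if two initial pasts $\xi_0,\xitilde_0\in\C_\varrho(-\infty,0]$ \emph{agree at time $0$}, $\xi_0(0)=\xitilde_0(0)$, then the future path laws $Q_{[0,\infty)}(\xi_0,\cdot)$ and $Q_{[0,\infty)}(\xitilde_0,\cdot)$ are mutually equivalent. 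Lemma~\ref{lem:Lebesgue} is used precisely to select, from the full-measure ergodic sets of $\mu_1$ and $\mu_2$, two pasts with the \emph{same} value at $t=0$. Once the pasts agree at $t=0$, the Girsanov drift in Lemma~\ref{lem:Q_infty:equivalence} only has to absorb the mismatch of the memory integral over the \emph{fixed past} $(-\infty,0]$ — a drift of the form $\int_{-\infty}^0\bigl[H(x(t)-\xitilde_0(s))-H(x(t)-x_0(s))\bigr]K(t-s)\,\d s$, in which the same future trajectory $x(\cdot)$ appears on both sides. The future trajectories are therefore \emph{identical} (as functions of the driving noise) under the change of measure: there is no $U'$-mismatch, no mismatch in the future part of the memory integral, and hence no need for a dissipativity/contraction argument at all. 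Novikov's condition then follows because $K(t+r)\le Ce^{-\delta(t+r)}$ and the initial pasts grow at most like $|s|^\varrho$ with $\varrho$ arbitrarily small (equations~\eqref{cond:H:1} and~\eqref{form:integral-K:Girsanov}).

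Your feedback-control coupling, by contrast, must also cancel the instantaneous mismatch $U'(x(r))-U'(\xtil(r))$, and this is where I see a real gap. Assumption~\ref{cond:U} only requires $U\in C^1$ with $|U'(x)|\le a_0(U(x)^{n_0}+1)$; no local Lipschitz estimate on $U'$, let alone one with polynomially bounded local Lipschitz constant, is assumed. So even if you could establish exponential decay of $\bar x(t)=x(t)-\xtil(t)$, you cannot conclude that $|U'(x(r))-U'(\xtil(r))|$ decays — and hence cannot bound the $L^2$ cost of the control needed for Girsanov. You correctly flag the analogous issue for $H$ (which is handled because $H'$ is assumed to have polynomial growth), but you overlook the $U'$ term entirely, and the hypotheses of the paper simply do not supply the needed regularity of $U$. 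There is also a circularity to untangle: the well-posedness of the controlled system and the exponential decay of $(\bar x,\bar v)$ both depend on the control $g$, while the $L^2$-integrability of $g$ depends on the decay. This can be managed with a localization/bootstrap argument, but your step (vi) also needs care — after the Girsanov change of measure $\xitilde$ has the correct marginal law only under $\widetilde\P$, not $\P$, so ``positive probability of coalescence under $\P$'' plus Lipschitz test functions does not by itself yield $\mu_1=\mu_2$ without an additional equivalence argument (which is precisely what Lemma~\ref{lem:Q_infty:equivalence} provides in the paper's setup). The cleanest fix for your approach would be to first prove the analogue of Lemma~\ref{lem:Lebesgue}, which lets you match the pasts at $t=0$ and dispense with the feedback control and the $U'$-mismatch altogether — i.e., to recover the paper's strategy.
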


\begin{remark} 
It is worth pointing out that Theorem~\ref{thm:unique} itself is merely a uniqueness result and does not guarantee the existence.  Taken together, Theorems~\ref{thm:existence} and~\ref{thm:unique} establish both the existence and uniqueness of an invariant measure.
\end{remark}

As briefly mentioned in Section~\ref{Sec:Intro}, the proof of Theorem~\ref{thm:unique} draws upon the coupling argument employed in \cite{bakhtin2005stationary,
weinan2002gibbsian,weinan2001gibbsian,glatt2017unique,
hairer2011asymptotic,herzog2023gibbsian,mattingly2002exponential,
mattingly2002ergodicity}. By making a valid change of measure on the infinite time horizon via the Girsanov Theorem, it is possible to show that any two solutions must eventually converge to the same limit point. In turn, this yields the uniqueness of $\mu$. The detailed proof of Theorem~\ref{thm:unique} will be provided in Section~\ref{sec:existence-uniqueness:uniqueness}.

\section{Moment bounds of the solutions} \label{sec:moment-bound}

In this section, we establish moment bounds on the solution of~\eqref{eqn:droplet}, which will be employed to prove the main theorems in Section~\ref{sec:existence-uniqueness}. Throughout the rest of the paper, $c$ and $C$ denote generic positive constants. The main parameters that they depend on will appear between parentheses, e.g., $c(T,q)$ is a function of $T$ and $q$. Also, we will denote by $\E$ the expectation with respect to the probability measure $\P$.

We introduce the functional $\Phi:\rbb^2\to[1,\infty)$ defined as
\begin{equation} \label{form:Phi(x,v)}
\Phi(x,v)=U(x)+\tfrac{1}{2}v^2,
\end{equation}
which can be regarded as the sum of the applied potential energy and the walker kinetic energy in~\eqref{eqn:droplet}. In view of condition~\eqref{cond:U:1}, since $U(x)$ dominates $x^2$, there exist positive constants $\lambda,\lambda_1,\in(0,1)$ sufficiently small such that
\begin{align} \label{cond:lambda}
\Phi(x,v)+\lambda xv=U(x)+\tfrac{1}{2}v^2+\lambda xv \ge \lambda_1\Phi(x,v),\quad (x,v)\in\rbb^2.
\end{align}

Next, in order to suitably control the growth rate of a solution $\xi$, the initial condition $\xi_0$ must be from
a subset of $\C\Kp(-\infty,0]$. In view of Assumption~\ref{cond:U}, we first pick $p_2>0$ such that
\begin{equation} \label{cond:p_2}
2\max\{1,p_1+\varepsilon_1\}>p_2>2p_1,
\end{equation}
where $p_1$ and $\varepsilon_1$ are as in~\eqref{cond:H:1} and~\eqref{cond:U:1}, respectively. Following the framework of \cite{bonaccorsi2012asymptotic,nguyen2022ergodicity}, we introduce the memory variable $\eta(t;s)$ given by
\begin{equation} \label{form:eta}
\eta(t;s)=x(t+s),\quad t\ge 0,\, s\le 0,
\end{equation}
with initial condition $\eta(0;s)=x_0(s)\in C((-\infty,0];\rbb)$ such that $\|x_0\|_\Kptwo<\infty$.  Observe that $\eta(t;\cdot)$ satisfies the equation 
\begin{equation} \label{eqn:eta}
\tfrac{\d}{\d t}\eta(t;\cdot)=\tfrac{\d}{\d s}\eta(t;\cdot),\quad \eta(0;s)=x_0(s),\, s\le 0,\quad \eta(t;0)=x(t),\, t>0.
\end{equation}
Denoting $\eta(t):=\eta(t;\cdot)$ and using integration by parts,~\eqref{eqn:eta} yields 
\begin{align}
\tfrac{\d}{\d t}\|\eta(t)\|_\Kptwo&=\int_{-\infty}^0\close \dds|\eta(t;s)|^{p_2} K(-s)\d s \notag\\
&=|\eta(t;0)|^{p_2}K(0)+\int_{-\infty}^0\close |\eta(t;s)|^{p_2} K'(-s)\d s \notag\\
&\le |x(t)|^{p_2}K(0)-\delta\int_{-\infty}^0\close|\eta(t;s)|^{p_2} K(-s)\d s\notag\\
&=|x(t)|^{p_2}K(0)-\delta\|\eta(t)\|_\Kptwo.\label{ineq:|eta|_K}
\end{align}
where the second to last inequality follows from Assumption~\ref{cond:K}. Estimate~\eqref{ineq:|eta|_K} will be useful in the analysis of~\eqref{eqn:droplet}.

We now assert the following energy estimates, provided the system starts from a nice initial condition $\xi_0\in \C\Kptwo(-\infty,0]$.
\begin{lemma} \label{lem:moment-bound}
Suppose $\xi_0=(x_0,v_0)\in \C\Kptwo(-\infty,0]$ where $p_2$ is as in~\eqref{cond:p_2}. Let $\xi(t)=(x(t),v(t))$ be the solution of~\eqref{eqn:droplet} with the initial condition $\xi_0$, and $\eta(t;\cdot)$ be as in~\eqref{form:eta}. Then, the following holds:

1.
\begin{equation} \label{ineq:moment-bound:Phi(x)+v^2}
\E\big[\Phi(\xi(t))+\|\eta(t)\|_\Kptwo\big] \le C_1\,e^{-c_1t}\big[\Phi(x_0(0),v_0(0))+\|x_0\|_\Kptwo\big]+C_1, \quad t\ge 0,
\end{equation}
where $\Phi$ is as in~\eqref{form:Phi(x,v)} and $c_1,C_1$ are some positive constants independent of $t$ and $\xi_0$.

2. Let $\lambda$ be as in~\eqref{cond:lambda}. There exists $\beta_0>0$ such that for all $\beta\in(0,\beta_0)$,
\begin{align} 
&\E \exp\Big\{\beta\big[\Phi(\xi(t))+\lambda x(t)v(t)+\|\eta(t)\|_\Kptwo\big]\Big\} \notag \\
&\le e^{-c_2t} \exp\Big\{\beta\big[\Phi(x_0(0),v_0(0))+\lambda x_0(0)v_0(0)+\|x_0\|_\Kptwo\big]\Big\}+C_2, \quad t\ge 0,\label{ineq:exponential-bound:Phi(x)+v^2}
\end{align}
for some positive constants $c_2=c_2(\beta,\lambda),C_2=C_2(\beta,\lambda)$ independent of $t$ and $\xi_0$.

3. For all $p>0$,
\begin{align}\label{ineq:moment-bound:sup_[t,t+1]}
\E\sup_{r\in[t,t+1]}\Phi(\xi(r))^p\le c_3,\quad t\ge 0,
\end{align} 
where $c_3=c_3(\xi_0,p)$ does not depend on $t$.

4. For all $n,t_1,t_2>0$ such that $|t_2-t_1|<n$,
\begin{equation} \label{ineq:Holder}
\E|x(t_1)-x(t_2)|^4+\E|v(t_1)-v(t_2)|^4\le c_4(t_2-t_1)^2,
\end{equation}
where $c_4=c_4(\xi_0,n)$ does not depend on $t_1,t_2$. 
\end{lemma}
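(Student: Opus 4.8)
The plan is to establish the four estimates in sequence, using the energy functional $\Phi$ and the auxiliary memory norm $\|\eta(t)\|_\Kptwo$ as a combined Lyapunov quantity, and then bootstrap from $L^1$ control to exponential control, then to pathwise suprema, and finally to H\"older-type increment bounds.

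\textbf{Step 1: the $L^1$ bound \eqref{ineq:moment-bound:Phi(x)+v^2}.} First I would apply It\^o's formula to $\Phi(\xi(t)) = U(x(t)) + \tfrac12 v(t)^2$ along solutions of \eqref{eqn:droplet}. The drift contributions are: $U'(x)v$ from the $\d x$ equation, which cancels against the $-U'(x)v$ term coming from $v\,\d v$; the dissipation term $-v^2$; the memory term $-v\int_{-\infty}^t H(x(t)-x(s))K(t-s)\,\d s$; and the It\^o correction $\tfrac12$ from $\d W$. The memory term is the dangerous one: using Assumption~\ref{cond:H} to bound $|H(x(t)-x(s))| \le a_H(|x(t)-x(s)|^{p_1}+1) \le c(|x(t)|^{p_1} + |x(s)|^{p_1} + 1)$ and Young's inequality, I would absorb the $|x(t)|^{p_1}$ piece into $U(x(t))$ via condition~\eqref{cond:U:1} (which gives $U(x) \gtrsim |x|^{2p_1}$, hence $|x|^{p_1} \lesssim U(x)^{1/2}$ and $v|x|^{p_1} \lesssim \varepsilon v^2 + \varepsilon^{-1}U(x)$), and control the $|x(s)|^{p_1}$ part against $\|\eta(t)\|_\Kptwo$ using H\"older in $s$ with weight $K(-s)$ — this is exactly why $p_2 > 2p_1$ was chosen in \eqref{cond:p_2}. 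Then I would add the ODE estimate \eqref{ineq:|eta|_K} for $\tfrac{\d}{\d t}\|\eta(t)\|_\Kptwo$, whose $|x(t)|^{p_2}K(0)$ forcing term is again dominated by $U(x(t))$ because $p_2 < 2\max\{1,p_1+\varepsilon_1\}$. Combining, and using \eqref{cond:lambda} together with $xU'(x)\ge a_1U(x)-a_2$ from \eqref{cond:U:2} to get true coercivity, one obtains a differential inequality $\tfrac{\d}{\d t}\E[\Psi(t)] \le -c\,\E[\Psi(t)] + C$ for $\Psi(t) = \Phi(\xi(t)) + \|\eta(t)\|_\Kptwo$; Gronwall then yields \eqref{ineq:moment-bound:Phi(x)+v^2}. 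To justify the use of It\^o's formula one works with a stopping-time localization and Fatou/monotone convergence, as is standard for Gibbsian SDEs.

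\textbf{Step 2: the exponential bound \eqref{ineq:exponential-bound:Phi(x)+v^2}.} Here I would apply It\^o's formula to $\exp\{\beta G(t)\}$ where $G(t) = \Phi(\xi(t)) + \lambda x(t)v(t) + \|\eta(t)\|_\Kptwo$; the cross term $\lambda xv$ is included precisely so that, via \eqref{cond:lambda}, $G$ is comparable to $\Psi$ while simultaneously $\tfrac{\d}{\d t}(\lambda xv)$ produces a $-\lambda U'(x)x$ contribution that, by \eqref{cond:U:2}, supplies strict negative feedback proportional to $U(x)$ itself (not just to $v^2$). The It\^o formula generates $\beta e^{\beta G}(\text{drift of }G) + \tfrac12\beta^2 e^{\beta G}(\partial_v G)^2$; the quadratic-in-$\beta$ term is of size $\beta^2(v+\lambda x)^2 e^{\beta G}$, which for $\beta$ small enough is dominated by the linear-order dissipation $-c\beta(U(x)+v^2)e^{\beta G}$ because $U$ dominates $x^2$. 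Choosing $\beta_0$ so that this absorption works for all $\beta<\beta_0$, one gets $\tfrac{\d}{\d t}\E[e^{\beta G(t)}] \le -c_2\E[e^{\beta G(t)}] + C_2$, whence \eqref{ineq:exponential-bound:Phi(x)+v^2} by Gronwall. The main obstacle in Steps 1--2 is the bookkeeping of the memory term: making sure every occurrence of past positions $x(s)$ is controlled by $\|\eta(t)\|_\Kptwo$ with the right power and that the induced forcing on the $\eta$-equation closes against $U$; this is the crux of the ``$U$ dominates $H$'' hypothesis.

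\textbf{Steps 3 and 4: pathwise suprema and increments.} For \eqref{ineq:moment-bound:sup_[t,t+1]}, I would return to the It\^o expansion of $\Phi(\xi(r))^p$ on $r\in[t,t+1]$, take the supremum, and handle the martingale part $\int_t^r \Phi^{p-1}\partial_v\Phi\,\d W$ by the Burkholder--Davis--Gundy inequality, bounding its quadratic variation by $\sup_{[t,t+1]}\Phi^{2p-1}$ times the drift terms already controlled in expectation via Steps 1--2 (here one uses $\E\Phi(\xi(t))^p < \infty$ for every fixed $t$, which follows by a separate, cruder polynomial-moment Lyapunov estimate or by noting $e^{\beta\Phi}$-integrability from Step 2 dominates all polynomial moments); a Young-inequality absorption of the BDG term then closes the bound with a constant independent of $t$ but depending on $\xi_0$ and $p$. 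Finally, for \eqref{ineq:Holder} I would write $x(t_2)-x(t_1) = \int_{t_1}^{t_2} v(r)\,\d r$, so $\E|x(t_2)-x(t_1)|^4 \le |t_2-t_1|^3\int_{t_1}^{t_2}\E|v(r)|^4\,\d r \le c|t_2-t_1|^4$ using the uniform fourth-moment bound on $v$ from Step 3; and $v(t_2)-v(t_1) = \int_{t_1}^{t_2}(-v-U'(x)-\int_{-\infty}^r H\,K\,\d s)\,\d r + (W(t_2)-W(t_1))$, where the drift integral is estimated by H\"older (its integrand has all moments, using \eqref{cond:U:3} to bound $|U'(x)|$ by $U(x)^{n_0}$ and Step 2's exponential integrability to kill the power $n_0$, plus the memory term controlled as before) giving an $O(|t_2-t_1|^4)$ contribution, while $\E|W(t_2)-W(t_1)|^4 = 3|t_2-t_1|^2$ gives the leading $O(|t_2-t_1|^2)$ term. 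Since $|t_2-t_1|<n$, the quartic terms are dominated by the quadratic one up to the constant $c_4(\xi_0,n)$, yielding \eqref{ineq:Holder}. I expect Step 4 to be routine once Steps 1--3 are in hand; the genuine difficulty is concentrated in the memory-term estimates of Steps 1 and 2.
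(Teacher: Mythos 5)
Your proposal follows essentially the same approach as the paper's proof in all four parts: It\^o's formula for $\Phi$ combined with the $\eta$-equation \eqref{ineq:|eta|_K}, the $\lambda xv$ correction from \eqref{cond:lambda} and $xU'(x)\ge a_1 U(x)-a_2$ to obtain coercivity, $\varepsilon$-Young and $p_2>2p_1$ to close the memory term, Gr\"onwall for parts 1--2, BDG plus the polynomial moment bounds extracted from the exponential estimate for part 3, and H\"older plus the Brownian increment for part 4. The only noticeable deviation is in part 3: you apply It\^o directly to $\Phi^p$ and then Young-absorb the resulting $\E\sup\Phi^{p-1/2}$ term, whereas the paper first integrates the SDE for $\Phi$ and only then raises the displayed identity to the $p$-th power before taking suprema; both are routine and lead to the same estimate, so this is an immaterial variation rather than a different route.
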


\begin{proof}
1. We apply It\^o's formula to $\Phi(x,v)=U(x)+\frac{1}{2}v^2$ to obtain the identity
\begin{align}\label{eqn:Ito:Phi}
\d \Phi(x(t),v(t)) &=-v(t)^2\d t -v(t)\int_{-\infty}^t\close H(x(t)-x(s))K(t-s)\d s\d t+v(t)\d W(t)+\tfrac{1}{2}\d t.
\end{align}
Also, 
\begin{align*}
\d  \left[x(t)v(t)\right]&=v(t)^2\d t -x(t)v(t)\d t-x(t)U'(x(t))\d t\\
&\qquad-x(t)\int_{-\infty}^t\close H(x(t)-x(s))K(t-s)\d s\d t+x(t)\d W(t).
\end{align*} 
Combining the preceding two identities and recalling $\lambda$ as in~\eqref{cond:lambda}, we obtain
\begin{align}
\d \Big(\Phi(x(t),v(t))+\lambda x(t)v(t)\Big)&=-(1-\lambda)v(t)^2\d t-\lambda x(t)U'(x(t))\d t-\lambda x(t)v(t)\d t+\tfrac{1}{2}\d t\notag\\
&\qquad-(\lambda x(t)+v(t))\int_{-\infty}^t\close H(x(t)-x(s))K(t-s)\d s\d t \notag\\
&\qquad+(\lambda x(t)+v(t))\d W(t).\label{eqn:Ito:Phi(x,v)+1/2xv}
\end{align}
In view of condition~\eqref{cond:U:2}, we readily have the bound
\begin{align*}
x(t)U'(x(t))\ge a_1U(x(t))-a_2.
\end{align*}
With regard to the integral involving $H$ and $K$, we invoke Assumption~\ref{cond:H} to estimate
\begin{align*}
|H(x(t)-x(s))|\le a_H(|x(t)-x(s)|^{p_1}+1)\le c(|x(t)|^{p_1}+|x(s)|^{p_1}+1),
\end{align*}
where $p_1$ is as in~\eqref{cond:H:1} and $c=c(p_1)>0$ does not depend on $x(t),x(s)$. It follows that
\begin{align*}
&\left|\left(\lambda x(t)+v(t)\right)\int_{-\infty}^t\close H(x(t)-x(s))K(t-s)\d s\right|\\&\le c\big(\left|x(t)\right|^{1+p_1}+|v(t)|\,|x(t)|^{p_1}+|x(t)|+|v(t)|\big)\|K\|_{L^1(\rbb^+)}\\
&\qquad\qquad+c\big(|x(t)|+|v(t)|\big)\int_{-\infty}^t\close|x(s)|^{p_1}K(t-s)\d s.
\end{align*}
Using the $\varepsilon$-Young inequality, we observe that
\begin{align*}
|v(t)|\cdot|x(t)|^{p_1}\le \tfrac{1}{4}\varepsilon|v(t)|^2+\varepsilon^{-1}|x(t)|^{2p_1}.
\end{align*}
Recalling the choice of $p_2$ as in~\eqref{cond:p_2}, namely, $p_2>2p_1$, we employ the $\varepsilon$-Young inequality again with the triple $(\frac{1}{2}, \frac{p_1}{p_2}, \frac{1}{2}-\frac{p_1}{p_2})$ to infer that
\begin{align*}
|x(t)|\cdot|x(s)|^{p_1}\le \varepsilon |x(t)|^2+\varepsilon|x(s)|^{p_2}+c, 
\end{align*}
for some positive constant $c=c(\varepsilon)$ that might grow arbitrarily large as $\varepsilon\rightarrow 0$. As a consequence, recalling $\eta(t;\cdot)=x(t+\cdot)$ as in~\eqref{form:eta} and that $K(t)\le K(0)e^{-\delta t}$ from Remark \ref{remark:K:exponential}, we have the following chain of implications: \begin{align*}
&|x(t)|\int_{-\infty}^t\close|x(s)|^{p_1}K(t-s)\d s\\
&\le \varepsilon|x(t)|^2\int_{-\infty}^t\close  K(t-s)\d s+\varepsilon\int_{-\infty}^t\close|x(s)|^{p_2}K(t-s)\d s+c\int_{-\infty}^t\close K(t-s)\d s\\
&\le \varepsilon\frac{K(0)}{\delta}|x(t)|^2 +\varepsilon\int_{-\infty}^0\close |\eta(t;s)|^{p_2} K(-s)\d s+c\\
&=\varepsilon\frac{K(0)}{\delta}|x(t)|^2+\varepsilon\|\eta(t)\|_\Kptwo+c.
\end{align*}
Similarly, 
\begin{align*}
|v(t)|\int_{-\infty}^t\close|x(s)|^{p_1}K(t-s)\d s
&\le  
\varepsilon \frac{K(0)}{\delta} |v(t)|^2+\varepsilon\|\eta(t)\|_\Kptwo+c,
\end{align*}
where, again, $c=c(\varepsilon)$ may be arbitrarily large. We now invoke~\eqref{ineq:|eta|_K} to bound $\|\eta(t)\|_\Kptwo$:
\begin{align*}
\tfrac{\d}{\d t}\|\eta(t)\|_\Kptwo\le |x(t)|^{p_2}K(0)-\delta\|\eta(t)\|_\Kptwo.
\end{align*}
Collecting the above estimates together with~\eqref{eqn:Ito:Phi(x,v)+1/2xv}, we may choose $\varepsilon$ sufficiently small to deduce the bound in expectation 
\begin{align*}
&\ddt \E \Big(\Phi(x(t),v(t))+\lambda x(t)v(t)+\|\eta(t) \|_\Kptwo\Big)\\
&\le -c\E\big[v(t)^2+U(x(t))+\|\eta(t)\|_\Kptwo\big]+C\E\big[|x(t)|^{p_2}+|x(t)|^{1+p_1}+|x(t)|^{2p_1}\big]+\varepsilon\frac{K(0)}{\delta}\E|x(t)|^2+C.
\end{align*}
In view of conditions~\eqref{cond:U:1} and~\eqref{cond:p_2}, we observe that the positive terms involving $x(t)$ on the above right-hand side can be subsumed into $U(x(t))$. We thus obtain
\begin{align}
&\ddt \E \Big(\Phi(x(t),v(t))+\lambda x(t)v(t)+\|\eta(t) \|_\Kptwo\Big)\notag\\
&\le -c\E\big[v(t)^2+U(x(t))+\|\eta(t)\|_\Kptwo\big]+C\notag\\
&\le -c\E\big[\tfrac{1}{2}v(t)^2+U(x(t))+\lambda x(t)v(t)+\|\eta(t)\|_\Kptwo\big]+C\notag\\
&= -c\E \Big(\Phi(x(t),v(t))+\lambda x(t)v(t)+\|\eta(t) \|_\Kptwo\Big)+C,\label{ineq:d.E(Phi+v^2+xv+eta)<-c.E(Phi+v^2+xv+eta)+C}
\end{align}
where, in the second to last implication, we use the fact that $\lambda xv$ is dominated by $\frac{1}{2}v^2+U(x)=\Phi(x,v)$. Gr\"{o}nwall's inequality then implies the estimate
\begin{align}
&\E \Big(\Phi(x(t),v(t))+\lambda x(t)v(t)+\|\eta(t) \|_\Kptwo\Big) \notag\\
&\le e^{-ct}\E \Big(\Phi(x_0(0),v_0(0))+\lambda x_0(0)v_0(0)+\|x_0\|_\Kptwo\Big)+C,\quad t\ge 0.\label{ineq:E(Phi+v^2+xv+eta)<e^(-ct)+C}
\end{align}
In the above, we emphasize again that the positive constants $c$ and $C$ do not depend on the initial condition $(x_0,v_0)$. By subsuming $x(t)v(t)$ into $\Phi(x(t),v(t))$, cf.~\eqref{cond:lambda}, we observe that~\eqref{ineq:E(Phi+v^2+xv+eta)<e^(-ct)+C} produces the bound~\eqref{ineq:moment-bound:Phi(x)+v^2}, as claimed.

\vspace{0.2cm}
2. Let $$g(\xi(t))=\Phi(x(t),v(t))+\lambda x(t)v(t)+\|\eta(t)\|_\Kptwo,$$ 
and let $\beta>0$ be a constant to be chosen later. It\^o's formula yields the identity
\begin{align*}
\d e^{\beta g(\xi(t))}&=\beta e^{\beta g(\xi(t))}\d g(\xi(t))+\tfrac{1}{2}\beta^2e^{\beta g(\xi(t))}\big\la \d g(\xi(t)),\d g(\xi(t))\big\ra\\
&= \beta e^{\beta g(\xi(t))} \big[ \d g(\xi(t))+\tfrac{1}{2}\beta\big\la \d g(\xi(t)),\d g(\xi(t))\big\ra\big].
\end{align*}
In view of~\eqref{eqn:Ito:Phi(x,v)+1/2xv} and recalling that $\lambda\in(0,1)$,
\begin{align*}
\big\la \d g(\xi(t)),\d g(\xi(t))\big\ra = (\lambda x(t)+v(t))^2\d t\le 2x(t)^2\d t+2v(t)^2\d t,
\end{align*}
by the elementary inequality $(a+b)^2\leq 2(a^2+b^2)$. Together with estimate~\eqref{ineq:d.E(Phi+v^2+xv+eta)<-c.E(Phi+v^2+xv+eta)+C}, we deduce that
\begin{align*}
\ddt \E e^{\beta g(\xi(t))}&\le \beta \E e^{\beta g(\xi(t))}\Big( -c\,g(\xi(t))+C+\beta\big[x(t)^2+v(t)^2\big]\Big) .
\end{align*}
We also note that there exists a constant $\beta_0$ sufficiently small such that for all $\beta\in (0,\beta_0)$,
\begin{align*}
-c\,g(\xi(t))+C+\beta\big[x(t)^2+v(t)^2\big]\le -\tilde{c}\,g(\xi(t))+\tilde{C},
\end{align*}
where $\tilde{c}$ and $\tilde{C}$ on the right-hand side may depend on $\beta$. It follows that
\begin{align*}
\ddt \E e^{\beta g(\xi(t))}&\le \E \Big[e^{\beta g(\xi(t))}\big(-\tilde{c}\,g(\xi(t))+\tilde{C}\big)\Big].
\end{align*}
Note that the following inequality holds \cite{cerrai2020convergence}:
\begin{align*}
e^{\beta z}(- \tilde{c} z+\tilde{C})\le -c'\,e^{\beta z}+C', \quad z\ge 0,
\end{align*} 
for suitably chosen $c',\,C'>0$. We therefore deduce the following important estimate:
\begin{align*}
\ddt \E e^{\beta g(\xi(t))} \le -c\,\E e^{\beta g(\xi(t))}+C,
\end{align*}
which clearly produces~\eqref{ineq:exponential-bound:Phi(x)+v^2} by virtue of Gr\"{o}nwall's inequality.

\vspace{0.2cm}
3. We first recall from condition \eqref{cond:lambda} that $xv$ can be subsumed into $\Phi(x,v)=U(x)+\frac{1}{2}v^2$, the exponential estimate~\eqref{ineq:exponential-bound:Phi(x)+v^2} implies that
for all $p>0$ and $\xi_0\in \C\Kptwo(-\infty,0]$,
\begin{align} \label{ineq:moment-bound:U(x)+v^2+eta:p}
&\E\Big[ |U(x(t))|^p+ |v(t)|^p+\|\eta(t)\|^p_\Kptwo\Big]\notag\\
&\le c\,\E \exp\Big\{\beta\big[\Phi(\xi(t))+\lambda x(t)v(t)+\|\eta(t)\|_\Kptwo\big]\Big\} \le c=c(\xi_0,p),\quad t\ge 0.
\end{align}
To establish~\eqref{ineq:moment-bound:sup_[t,t+1]}, we first integrate~\eqref{eqn:Ito:Phi} with respect to time and obtain
\begin{align*}
\Phi(x(r),v(r))&=\Phi(x(t),v(t))+\int_t^r\close -v(\ell)^2-v(\ell)\int_{-\infty}^\ell\close H(x(\ell)-x(s))K(\ell-s)\d s\d \ell\\
&\qquad+\int_t^r v(\ell)\d W(\ell)+\tfrac{1}{2}(r-t). 
\end{align*}
It follows that for $p\ge 2$ and $r\in[t,t+1]$,
\begin{align*}
\Phi(x(r),v(r))^p&\leq c\,\Phi(x(t),v(t))^p+c\Big|\int_t^r v(\ell)\int_{-\infty}^\ell\close H(x(\ell)-x(s))K(\ell-s)\d s\d \ell\Big|^p\\
&\qquad+c\Big|\int_t^r v(\ell)\d W(\ell)\Big|^p+c,
\end{align*}
where $c=c(p)>0$ does not depend on $t$. Taking the supremum of both sides, 
we further obtain
\begin{equation} \label{ineq:moment-bound:sup_[t,t+1]:a}
\begin{aligned}
&\sup_{r\in[t,t+1]}\Phi(x(r),v(r))^p\\
&\le c\,\Phi(x(t),v(t))^p+c\Big|\int_t^{t+1}\close\close |v(\ell)|\int_{-\infty}^\ell\close |H(x(\ell)-x(s))|K(\ell-s)\d s\d \ell\Big|^p\\
&\qquad+c\sup_{r\in [t,t+1]}\Big|\int_t^r v(\ell)\d W(\ell)\Big|^p+c.
\end{aligned}
\end{equation}
To estimate the martingale term on the right-hand side above, we employ Burkholder's inequality:
\begin{align*}
\E\sup_{r\in [t,t+1]}\Big|\int_t^r v(\ell)\d W(\ell)\Big|^p\le c\,\E\Big|\int_t^{t+1}\close v(\ell)^2\d \ell\Big|^{p/2}\le c \int_t^{t+1}\close \E |v(\ell)|^p\d \ell,
\end{align*}
where the last implication follows from H\"{o}lder's inequality. In view of~\eqref{ineq:moment-bound:U(x)+v^2+eta:p}, we readily obtain
\begin{align} \label{ineq:moment-bound:sup_[t,t+1]:b}
\E\sup_{r\in [t,t+1]}\Big|\int_t^r v(\ell)\d W(\ell)\Big|^p \le c.
\end{align}
Similarly, the second term on the right-hand side of~\eqref{ineq:moment-bound:sup_[t,t+1]:a} may be estimated as
\begin{align*}
&\Big|\int_t^{t+1}\close\close |v(\ell)|\int_{-\infty}^\ell\close |H(x(\ell)-x(s))|K(\ell-s)\d s\d \ell\Big|^p\\
&\le \int_t^{t+1}\close\close |v(\ell)|^p\Big(\int_{-\infty}^\ell\close |H(x(\ell)-x(s))|K(\ell-s)\d s\Big)^p\d \ell .
\end{align*}
From~\eqref{cond:H:1} and~\eqref{cond:p_2}, it follows that
\begin{align*}
\Big(\int_{-\infty}^\ell\close |H(x(\ell)-x(s))|K(\ell-s)\d s\Big)^p&\le c\Big(\int_{-\infty}^\ell\close\big( |x(\ell)|^{p_1}+|x(s)|^{p_1}+1\big)K(\ell-s)\d s\Big)^p\\
&\le c\Big(\int_{-\infty}^\ell\close\big( |x(\ell)|^{p_1}+|x(s)|^{p_2}+1\big)K(\ell-s)\d s\Big)^p\\
&=c\big(|x(\ell)|^{p_1}+\|\eta(\ell)\|_{\Kptwo} +1\big)^p\\
&\le c\big( |x(\ell)|^{p\,p_1}+\|\eta(\ell)\|_{\Kptwo}^{p} +1\big).
\end{align*}
As a consequence, 
\begin{align}
&\E\Big|\int_t^{t+1}\close\close |v(\ell)|\int_{-\infty}^\ell\close |H(x(\ell)-x(s))|K(\ell-s)\d s\d \ell\Big|^p \notag\\
&\le c \int_t^{t+1}\close \E |v(\ell)|^p\big( |x(\ell)|^{p\,p_1}+\|\eta(\ell)\|_\Kptwo^{p} +1\big)\d \ell\notag\\
&\le c \int_t^{t+1}\close \E |v(\ell)|^{2p}+ \E|x(\ell)|^{2p\,p_1}+\E\|\eta(\ell)\|_\Kptwo^{2p} \d \ell+c\le c(\xi_0,p), \label{ineq:moment-bound:sup_[t,t+1]:c}
\end{align}
where, in the last estimate above, we made use of~\eqref{ineq:moment-bound:U(x)+v^2+eta:p}. 

We now collect~\eqref{ineq:moment-bound:sup_[t,t+1]:a}-\eqref{ineq:moment-bound:sup_[t,t+1]:c} together with~\eqref{ineq:moment-bound:U(x)+v^2+eta:p} to arrive at
\begin{align*}
\E\sup_{r\in[t,t+1]}\Phi(x(r),v(r))^p \le c\, \E \Phi(x(t),v(t))^p+c\le c(\xi_0,p).
\end{align*}
This establishes~\eqref{ineq:moment-bound:sup_[t,t+1]}.

\vspace{0.2cm}
4. We turn to~\eqref{ineq:Holder}, and let $0\leq t_1\leq t_2$. Recalling Definition~\ref{def:solution}, we employ H\"{o}lder's inequality to estimate 
\begin{align*}
\E |x(t_2)-x(t_1)|^4=\E\Big(\int_{t_1}^{t_2}\close v(r)\d r \Big)^4\le (t_2-t_1)^3\int_{t_1}^{t_2}\E\, v(r)^4\d r\le c (t_2-t_1)^4,
\end{align*}
where, in the last implication, we invoked~\eqref{ineq:moment-bound:U(x)+v^2+eta:p} with a positive constant $c=c(\xi_0)$ independent of times $t_1$ and $t_2$. Similarly, 
\begin{align*}
&\E |v(t_2)-v(t_1)|^4\le c \E \Big(\int_{t_1}^{t_2}\close -v(r)-U(x(r))-\int_{-\infty}^r\close H(x(r)-x(s))K(r-s)\d s\d r\Big)^4+c\E\Big|\int_{t_1}^{t_2}\d W(r)\Big|^4.
\end{align*}
Since $\int_{t_1}^{t_2}\d W(r)\sim \text{N}(0,t_2-t_1)$, we readily obtain
\begin{align*}
\E\Big|\int_{t_1}^{t_2}\d W(r)\Big|^4=c(t_2-t_1)^2,
\end{align*}
so H\"{o}lder's inequality yields
\begin{align*}
&\E \Big(\int_{t_1}^{t_2}\close -v(r)-U'(x(r))-\int_{-\infty}^r\close H(x(r)-x(s))K(r-s)\d s\d r\Big)^4\\
&\le c(t_2-t_1)^3\,\E \left(\int_{t_1}^{t_2}\close v(r)^4+|U'(x(r))|^4+\Big|\int_{-\infty}^r\close H(x(r)-x(s))K(r-s)\d s\Big|^4\d r\right).
\end{align*}
We employ~\eqref{ineq:moment-bound:U(x)+v^2+eta:p} again to see that
\begin{align*}
\E \int_{t_1}^{t_2}\close v(r)^4\d r\le c(\xi_0)\cdot (t_2-t_1).
\end{align*}
To bound $U'$, we invoke~\eqref{cond:U:3} together with estimate~\eqref{ineq:moment-bound:U(x)+v^2+eta:p} to infer that
\begin{align*}
\E \int_{t_1}^{t_2}\close |U'(x(r))|^4\d r&\le c\, \E \int_{t_1}^{t_2}\close |U(x(r))|^{4n_0}+1\d r\le c(\xi_0)\cdot (t_2-t_1).
\end{align*}
For each $r\in[t_1,t_2]$, we employ~\eqref{cond:H:1} and ~\eqref{cond:p_2} to estimate
\begin{align*}
\Big|\int_{-\infty}^r\close H(x(r)-x(s))K(r-s)\d s\Big|^4& \le c\Big|\int_{-\infty}^r\close (|x(r)|^{p_1}+|x(s)|^{p_1}+1)K(r-s)\d s\Big|^4\\
&\le c \Big|\int_{-\infty}^r\close (|x(r)|^{p_1}+|x(s)|^{p_2}+1)K(r-s)\d s\Big|^4\\
&= c \Big|(|x(r)|^{p_1}+1) \|K\|_{L^1(\rbb^+)}+\|\eta(r)\|_\Kptwo\Big|^4\\
&\le c \big(|x(r)|^{4p_1}+\|\eta(r)\|_\Kptwo^4+1\big).
\end{align*}
We thus obtain
\begin{align*}
\int_{t_1}^{t_2}\E \Big|\int_{-\infty}^r\close H(x(r)-x(s))K(r-s)\d s\Big|^4\d r&\le c \int_{t_1}^{t_2} \E|x(r)|^{4p_1}+\E\|\eta(r)\|_\Kptwo^4+1\d r\\
&\le  c(\xi_0)\cdot(t_2-t_1),
\end{align*}
where, in the above right-hand side, we employed~\eqref{ineq:moment-bound:U(x)+v^2+eta:p} by using the fact that $U(x(t))$ dominates $x(t)^2$, cf.~\eqref{cond:U:1}.

We collect the preceding results to arrive at the bound
\begin{align*}
\E |x(t_2)-x(t_1)|^4+\E |v(t_2)-v(t_1)|^4 \le c(\xi_0,n)\cdot (t_2-t_1)^2,\quad |t_1-t_2|<n,
\end{align*}
which produces~\eqref{ineq:Holder}. The proof is thus finished.
\end{proof}

\section{existence and uniqueness of invariant measure} \label{sec:existence-uniqueness}

\subsection{Existence of invariant probability measures} \label{sec:existence-uniqueness:existence}
For each $T\ge0$, we introduce the probability measure $Q_T\in \text{Pr}(\C(-\infty,0])$:
\begin{equation} \label{form:Q_T}
Q_T(A)=\frac{1}{T}\int_0^T \P(\theta_r\xi_{(-\infty,r]}\in A)\d r,
\end{equation}
where $A\subset \C(-\infty,0]$ and $\xi_{(-\infty,r]}$ is the solution of~\eqref{eqn:droplet} in $t\in(-\infty,r]$ with initial condition $\xi_0=0\in \C(-\infty,0]$.

We are now in a position to conclude Theorem~\ref{thm:existence}. See also \cite{bakhtin2005stationary,ito1964stationary}.

\begin{proof}[Proof of Theorem~\ref{thm:existence}]
Let $Q_T$ be the probability measure defined in~\eqref{form:Q_T}. We first prove that $\{Q_T\}_{T\ge 0}$ is tight by verifying the conditions of \cite[Lemma 3.2]{ito1964stationary}. More specifically, it suffices to prove that
\begin{align} \label{ineq:tight:1}
\int_{\C(-\infty,0]}\close |\pi_x\xi(0)|^4+|\pi_v\xi(0)|^4 Q_T(\d\xi) \le c,
\end{align}
and that for all $-n\le -t_1\le -t_2\le 0$,
\begin{align}  \label{ineq:tight:2}
&\int_{\C(-\infty,0]} |\pi_x\xi(-t_1)-\pi_x\xi(-t_2)|^4+|\pi_v\xi(-t_1)-\pi_v\xi(-t_2)|^4 Q_T(\d\xi)\le c(n),
\end{align}
for some positive constants $c$ and $c(n)$ independent of $T$.

With regard to \eqref{ineq:tight:1}, in view of~\eqref{ineq:moment-bound:U(x)+v^2+eta:p}, observe that
\begin{align*} 
\int_{\C(-\infty,0]}\close |\pi_x\xi(0)|^4+|\pi_v\xi(0)|^4 Q_T(\d\xi)=\frac{1}{T}\int_0^T \E [x(r)^4+v(r)^4]\d r \le c.
\end{align*}
In the above, $c$ is a positive constant independent of $T$, and $(x(t),v(t))$ is the solution with initial condition $\xi_0=0\in\C(-\infty,0]$. This immediately implies \eqref{ineq:tight:1}.

Concerning \eqref{ineq:tight:2}, for each $n\ge 0$, since $\xi_0=0$,~\eqref{ineq:Holder} implies that for all $t_1,t_2\in\rbb$ such that $|t_1-t_2|<n$,
\begin{align*}
\E|x(t_1)-x(t_2)|^4+\E|v(t_1)-v(t_2)|^4\le c|t_1-t_2|^{3/2},
\end{align*}
where $c=c(n)>0$. We then deduce that for all $-n\le -t_1\le -t_2\le 0$
\begin{equation*}
\begin{aligned}
&\int_{\C(-\infty,0]} |\pi_x\xi(-t_1)-\pi_x\xi(-t_2)|^4+|\pi_v\xi(-t_1)-\pi_v\xi(-t_2)|^4 Q_T(\d\xi)\\
&=\frac{1}{T}\int_0^T\E|x(r-t_1)-x(r-t_2)|^4+\E|v(r-t_1)-v(r-t_2)|^4\d r \le c(n),
\end{aligned}
\end{equation*}
where the constant $c(n)$ does not depend on $T$. This produces \eqref{ineq:tight:2}.

 Combining~\eqref{ineq:tight:1}-\eqref{ineq:tight:2}, we see that the hypothesis of \cite[Lemma 3.2]{ito1964stationary} is verified, implying that the sequence $\{Q_T\}$ is tight in $\Pr(\C(-\infty,0])$. By the classical Prokhorov Theorem, (up to a subsequence) $\{Q_T\}_{T\ge 0}$ converges in law to a probability measure $Q_\infty=:\mu$. Also, using the same argument as in the proof of \cite[Theorem 2]{ito1964stationary}, $\mu$ is indeed invariant for $P_t$.

We now turn to~\eqref{ineq:mu(C_rho)=1}. By the Borel-Cantelli Lemma, it suffices to prove that, for $\varrho>0$, 
\begin{align} \label{ineq:mu:Borel-Cantelli}
\sum_{n\ge 1}\mu\Big\{\xi\in \C(-\infty,0]: \sup_{s\in[-n-1,-n]}|\pi_x\xi(s)|+|\pi_v\xi(s)|>(1+n)^\varrho  \Big\}<\infty,
\end{align}
which in turn follows from
\begin{align*}
\sum_{n\ge 1}Q_T\Big\{\xi\in \C(-\infty,0]: \sup_{s\in[-n-1,-n]}|\pi_x\xi(s)|+|\pi_v\xi(s)|>(1+n)^\varrho  \Big\}\le C,
\end{align*}
where $C>0$ does not depend on $T$. Using Markov's inequality and~\eqref{ineq:moment-bound:U(x)+v^2+eta:p}, we infer that
\begin{align*}
&Q_T\Big\{ \xi\in \C(-\infty,0]: \sup_{s\in[-n-1,-n]}|\pi_x\xi(s)|+|\pi_v\xi(s)|>(1+n)^\varrho \Big\}\\
&\le \frac{1}{T}\int_0^T \E \sup_{s\in[-n-1,-n]}|x(r+s)|^p+|v(r+s)|^p\d r \cdot \frac{c}{(1+n)^{\varrho p}}
\end{align*} 
for some $p>0$ to be determined. Since the initial condition $\xi_0=0$, we only need to consider $s$ in the above supremum where $s+r\ge 0$. In particular, in light of~\eqref{ineq:moment-bound:sup_[t,t+1]}, it is clear that for all $r\ge 0$
\begin{align*}
\E \sup_{s\in[-n-1,-n]}|x(r+s)|^p+|v(r+s)|^p\le c(p),
\end{align*} 
which does not depend on $r$ and $n$. We thus deduce that
\begin{align*}
Q_T\Big\{ \xi\in \C(-\infty,0]: \sup_{s\in[-n-1,-n]}|\pi_x\xi(s)|+|\pi_v\xi(s)|>(1+n)^\varrho \Big\}
&\le\frac{c(p)}{(1+n)^{\varrho p}}.
\end{align*}
By choosing $p>1/\varrho$, we arrive at the bound
\begin{align*}
\sum_{n\ge 1}Q_T\Big\{\xi\in \C(-\infty,0]: \sup_{s\in[-n-1,-n]}|\pi_x\xi(s)|+|\pi_v\xi(s)|>(1+n)^\varrho  \Big\}\le C(p),
\end{align*}
which produces~\eqref{ineq:mu:Borel-Cantelli} 
and thus completes the proof.

\end{proof}

\subsection{Uniqueness of invariant probability measures} \label{sec:existence-uniqueness:uniqueness}

We now turn to the uniqueness of the invariant probability measure $\mu$ that was constructed in the proof of Theorem~\ref{thm:existence}. The proof of Theorem~\ref{thm:unique} aims to compare any two solutions in the large-time asymptotic limit and show that they eventually must converge to the same point. This can be achieved by using a coupling argument whose main ingredients are the following two auxiliary lemmas.

\begin{lemma} \label{lem:Lebesgue}
Let $\mu$ be an invariant probability measure of~\eqref{eqn:droplet}. Then, the marginal $\pi_0\mu$ at time $0$ is equivalent to the Lebesgue measure on $\rbb^2$.
\end{lemma}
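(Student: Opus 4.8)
The plan is to combine the invariance of $\mu$ with the smoothing provided by the additive noise in the $v$-equation. Since $P_t$ is only defined on $\C\Kp(-\infty,0]$, the measure $\mu$ is carried by this space, and for every $\xi_0\in\C\Kp(-\infty,0]$ the solution of~\eqref{eqn:droplet} exists and is unique by Proposition~\ref{prop:well-posed}. Invariance then gives, for any fixed $t>0$ and any Borel set $A\subset\rbb^2$,
\[
\pi_0\mu(A)=\pi_0(P_t\mu)(A)=\int_{\C\Kp(-\infty,0]}\P\big(\xi(t)\in A\mid\xi_{(-\infty,0]}=\xi_0\big)\,\mu(\d\xi_0),
\]
so it suffices to prove that for every $\xi_0=(x_0,v_0)\in\C\Kp(-\infty,0]$ the conditional law $\Law\big(\xi(t)\mid\xi_0\big)$ on $\rbb^2$ is equivalent to Lebesgue measure; the lemma then follows by integrating the two resulting inclusions against $\mu$.

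To analyze this conditional law, fix $\xi_0$ and take $t=1$, and regard $r\mapsto(x(r),v(r))$ on $[0,1]$ as the solution of the $\{\mathcal F_r\}$-adapted equation $\d x=v\,\d r$, $\d v=b(r)\,\d r+\d W(r)$ with drift $b(r)=-v(r)-U'(x(r))-\int_{-\infty}^r\close H(x(r)-x(s))K(r-s)\,\d s$. By Proposition~\ref{prop:well-posed} the solution has a.s.\ continuous paths and does not explode, and using $\xi_0\in\C\Kp(-\infty,0]$ together with Assumptions~\ref{cond:K} and~\ref{cond:H} (as in the estimates leading to~\eqref{ineq:|eta|_K}) the memory term is a.s.\ bounded on $[0,1]$; hence $b$ is a.s.\ continuous on $[0,1]$ and in particular $\int_0^1 b(r)^2\,\d r<\infty$ a.s. I would then remove the drift by a Girsanov change of measure — localizing along the stopping times $\tau_N=\inf\{r\ge0:|b(r)|>N\}$, which satisfy $\tau_N\ge1$ for $N$ large since $b$ is a.s.\ continuous — to obtain a probability measure $\Qtilde$ on $(\Omega,\mathcal F_1)$ equivalent to $\P$ under which $\Wtilde(r):=W(r)-\int_0^r b(u)\,\d u$ is a standard Brownian motion. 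Under $\Qtilde$ one then has $v(r)=v_0(0)+\Wtilde(r)$ and $x(r)=x_0(0)+v_0(0)r+\int_0^r\Wtilde(u)\,\d u$, so $(x(1),v(1))$ is a Gaussian vector whose covariance is that of $\big(\int_0^1\Wtilde(u)\,\d u,\Wtilde(1)\big)$, namely $\left(\begin{smallmatrix}1/3&1/2\\1/2&1\end{smallmatrix}\right)$, which is nondegenerate. Hence $\Law_{\Qtilde}(x(1),v(1))$ has a smooth, strictly positive density on $\rbb^2$ and is equivalent to Lebesgue measure, and since $\Qtilde\sim\P$ the same is true of $\Law_\P(\xi(1)\mid\xi_0)$.

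It remains to assemble the conclusion from the integral formula above. If $\mathrm{Leb}(A)=0$, then $\Law(\xi(1)\mid\xi_0)(A)=0$ for every $\xi_0\in\C\Kp(-\infty,0]$, so $\pi_0\mu(A)=0$; thus $\pi_0\mu\ll\mathrm{Leb}$. Conversely, if $\mathrm{Leb}(A)>0$, then by strict positivity of the density $\Law(\xi(1)\mid\xi_0)(A)>0$ for every $\xi_0$, whence $\pi_0\mu(A)=\int\Law(\xi(1)\mid\xi_0)(A)\,\mu(\d\xi_0)>0$; thus $\mathrm{Leb}\ll\pi_0\mu$. Together these give $\pi_0\mu\sim\mathrm{Leb}$.

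The only genuinely delicate step is the Girsanov change of measure: the drift $b$ is unbounded, non-Markovian and path-dependent, and only known to be square-integrable almost surely, so Novikov's condition is not directly available. The hard part is therefore to justify — via the localization sketched above, using the non-explosion from Proposition~\ref{prop:well-posed} (and, if needed, the uniform moment bounds of Lemma~\ref{lem:moment-bound}) to pass to the limit $N\to\infty$ — that the resulting $\Qtilde$ is a genuine probability measure equivalent to $\P$ on all of $\mathcal F_1$. The remaining ingredients (the Gaussian covariance computation, the positivity of its density, and the integration against $\mu$) are routine.
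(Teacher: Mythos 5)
Your approach is genuinely different from the paper's and in some ways more self-contained. The paper introduces the truncated system \eqref{eqn:droplet:truncate:Lebesgue} with cut-offs $\theta_n$ on the memory term so that the Girsanov exponent is \emph{deterministically bounded} and Novikov's condition is immediate, compares via Girsanov with the Langevin dynamics $\d v=-v\,\d t-U'(x)\,\d t+\d W$ (keeping the dissipative drift) and cites a known H\"ormander argument for that Markovian system, and only then passes $n\to\infty$ using Lemma~\ref{lem:droplet:truncate:Lebesgue}. You instead strip the \emph{entire} drift $b(r)=-v(r)-U'(x(r))-\int_{-\infty}^r H(x(r)-x(s))K(r-s)\,\d s$ and reduce directly to a degenerate Brownian motion whose time-$1$ law is an explicit nondegenerate Gaussian with covariance $\left(\begin{smallmatrix}1/3&1/2\\1/2&1\end{smallmatrix}\right)$. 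This avoids both the truncation lemma and the appeal to hypoellipticity of the Langevin generator, which is an attractive simplification.

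There is, however, a real gap in the Girsanov step, and it is not just bookkeeping. You propose to justify the change of measure by localizing with $\tau_N=\inf\{r:|b(r)|>N\}$ and invoking the non-explosion of Proposition~\ref{prop:well-posed} and the moment bounds of Lemma~\ref{lem:moment-bound}. But those statements are about $\P$, and $\P$-a.s.\ finiteness of $\int_0^1 b^2\,\d r$ is \emph{not} sufficient for $Z_1=\exp\big(-\int_0^1 b\,\d W-\tfrac{1}{2}\int_0^1 b^2\,\d r\big)$ to be a true martingale. The Bessel(3) process on $[0,1]$ started at $1$ has a.s.\ square-integrable drift $1/X_r$ and never explodes, yet $Z_1=1/X_1$ satisfies $\E_\P[Z_1]=\P^{\mathrm{BM}}(\tau_0>1)<1$, so the would-be $\Qtilde$ is only a sub-probability. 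What must actually be checked is non-explosion under the \emph{target} dynamics: with $\Qtilde_N:=Z_{1\wedge\tau_N}\,\d\P$, one needs $\Qtilde_N(\tau_N\geq 1)\to 1$, since this yields $\E_\P[Z_1]\geq\lim_N\Qtilde_N(\tau_N\geq 1)=1$. That check is easy in your setting---under $\Qtilde_N$ and before $\tau_N$, $(x,v)$ is a bounded Gaussian path on $[0,1]$, so $v$, $U'(x)$ and the memory integral (whose $s\leq 0$ tail converges because $\xi_0\in\C\Kp(-\infty,0]$) are a.s.\ bounded and $\tau_N\to\infty$ under the consistent family of laws---but it is a different computation from the $\P$-side bounds you cite, and without it the equivalence $\Qtilde\sim\P$ on $\mathcal{F}_1$ does not follow. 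Once you replace the appeal to Proposition~\ref{prop:well-posed}/Lemma~\ref{lem:moment-bound} with this dual non-explosion check, the rest of your argument (the covariance computation, strict positivity of the Gaussian density, and the integration against $\mu$) is correct.
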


\begin{lemma} \label{lem:Q_infty:equivalence}
Let $\xi_0$ and $\xitilde_0$ be two initial pasts in $\C_\varrho(-\infty,0]$ such that $\xi_0(0)=\xitilde_0(0)$. Then, the measures $Q_{[0,\infty)}(\xi_0,\cdot)$ and $Q_{[0,\infty)}(\xitilde_0,\cdot)$ are equivalent. 
\end{lemma}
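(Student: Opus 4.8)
The plan is to prove Lemma~\ref{lem:Q_infty:equivalence} via an absolutely-continuous change of measure (Girsanov's theorem) on the infinite time horizon. Fix the two initial pasts $\xi_0,\xitilde_0\in\C_\varrho(-\infty,0]$ with $\xi_0(0)=\xitilde_0(0)$, and let $\xi_{(-\infty,\infty)}=(x,v)$ and $\xitilde_{(-\infty,\infty)}=(\xtil,\vtil)$ be the corresponding solutions of~\eqref{eqn:droplet} driven by the \emph{same} Brownian motion $W$. The key observation is that both solve the same SDE except that the memory integrals differ, since the pasts differ: for $r\ge 0$ write the drift discrepancy
\begin{equation*}
\Delta(r)=\Big(v(r)-\vtil(r)\Big)+\Big(U'(x(r))-U'(\xtil(r))\Big)+\int_{-\infty}^r\close\big[H(x(r)-x(s))-H(\xtil(r)-\xtil(s))\big]K(r-s)\d s.
\end{equation*}
The idea is to introduce a new probability measure $\widetilde\P$ under which $\Wtilde(t):=W(t)+\int_0^t\Delta(r)\d r$ is a Brownian motion; under $\widetilde\P$ the process $(x,v)$ started from $\xi_0$ has the same law as $(\xtil,\vtil)$ started from $\xitilde_0$ started from under $\P$. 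Concretely, one would like to argue that $Q_{[0,\infty)}(\xitilde_0,\cdot)$ and $Q_{[0,\infty)}(\xi_0,\cdot)$ are mutually absolutely continuous with Radon--Nikodym density given by the exponential martingale $\exp\big(-\int_0^\infty\Delta(r)\d W(r)-\tfrac12\int_0^\infty\Delta(r)^2\d r\big)$ (and its reciprocal), provided $\int_0^\infty\Delta(r)^2\d r<\infty$ almost surely.

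The crux, therefore, is to verify that $\int_0^\infty\Delta(r)^2\d r<\infty$ $\P$-a.s., i.e.\ that the two solutions synchronize fast enough. First I would control each of the three terms of $\Delta(r)$. The difference $v(r)-\vtil(r)$ and the position difference $x(r)-\xtil(r)$ satisfy a closed system of ODEs (with random, but a.s.\ finite, coefficients) obtained by subtracting the two copies of~\eqref{eqn:droplet}; the memory term acts as an exponentially-weighted perturbation because of Assumption~\ref{cond:K}, and the past discrepancy $x_0(s)-\xtil_0(s)$ enters only through $\int_{-\infty}^0[\cdots]K(r-s)\d s$, which decays like $e^{-\delta r}$ times a quantity that is finite because $\xi_0,\xitilde_0\in\C_\varrho(-\infty,0]$ and $K$ has exponential decay (so the polynomial growth of the pasts is harmless). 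For the nonlinear terms $U'$ and $H$ I would use the polynomial growth bounds~\eqref{cond:U:3} and~\eqref{cond:H:1} together with the moment bounds of Lemma~\ref{lem:moment-bound}—in particular~\eqref{ineq:moment-bound:sup_[t,t+1]} and~\eqref{ineq:moment-bound:U(x)+v^2+eta:p}, which hold uniformly in time—to get that $\E\Delta(r)^2$ is integrable over $[0,\infty)$, or at least that $\Delta(r)\to 0$ geometrically in an $L^2$ sense; a Borel--Cantelli argument over unit intervals then upgrades this to $\int_0^\infty\Delta(r)^2\d r<\infty$ a.s. The mechanism driving the decay is the drag term $-v\,\d t$ in~\eqref{eqn:droplet} acting on the velocity difference, i.e.\ the same dissipativity exploited in Lemma~\ref{lem:moment-bound}; since $\xi_0(0)=\xitilde_0(0)$ the two trajectories start at the same point, so the only source of divergence is the past, which is exponentially discounted.

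Once $\int_0^\infty\Delta(r)^2\d r<\infty$ a.s.\ is established, I would invoke a local (Novikov-type) version of Girsanov's theorem—or, to avoid uniform-integrability subtleties of the density on $[0,\infty)$, apply Girsanov on each finite horizon $[0,N]$ to conclude $Q_{[0,N]}(\xi_0,\cdot)\sim Q_{[0,N]}(\xitilde_0,\cdot)$ and then pass to the limit using that a measure on $\C[0,\infty)$ is determined by its restrictions to $\C[0,N]$ and that equivalence is preserved under the projective limit when the densities form a uniformly integrable (or a.s.\ convergent, nonvanishing) martingale. Running the same argument with the roles of $\xi_0$ and $\xitilde_0$ swapped gives the reverse absolute continuity, hence equivalence of $Q_{[0,\infty)}(\xi_0,\cdot)$ and $Q_{[0,\infty)}(\xitilde_0,\cdot)$.

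The main obstacle I anticipate is the a.s.\ finiteness of $\int_0^\infty\Delta(r)^2\d r$: one must show the velocity/position differences decay despite the nonlinear feedback through $U'$ and $H$, and despite the memory integral coupling the current difference to the whole past difference. This is where Assumption~\ref{cond:U} (the potential dominating $H$) and Assumption~\ref{cond:K} (exponential kernel decay) are essential, exactly as flagged in Remark~\ref{remark:U}; and this is also where the restriction to $\C_\varrho(-\infty,0]$—ensuring the pasts grow at most slowly—is used, so that the $e^{-\delta r}$ weight beats the polynomial-in-$s$ size of $x_0(s)-\xtil_0(s)$. A secondary technical point is justifying the passage from finite-horizon Girsanov to the infinite horizon, which I would handle by the standard argument that equivalence on an increasing family of $\sigma$-algebras generating the whole $\sigma$-algebra, together with an a.s.-convergent strictly-positive density martingale, yields equivalence of the limiting measures.
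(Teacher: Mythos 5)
Your high-level plan — comparing the two future laws via Girsanov on $[0,\infty)$, using the exponential decay of $K$ to beat the polynomial growth of the initial pasts — is the right idea and is what the paper does. But you have misidentified the drift discrepancy that enters the Radon--Nikodym density, and the step you then propose (synchronization of two coupled solutions) is neither needed nor available. For Girsanov one writes the SDE for the solution started from $\xi_0$ and recognizes it as the SDE started from $\xitilde_0$ plus a drift correction; the correction is the difference of the two drift coefficients evaluated \emph{along the same process} $(x,v)$. Since at time $r\ge 0$ the two drifts differ only through the $(-\infty,0]$ part of the memory integral, the correct discrepancy is
\begin{equation*}
\int_{-\infty}^0\close\big[H(x(r)-\xtil_0(s))-H(x(r)-x_0(s))\big]K(r-s)\,\d s,
\end{equation*}
involving the single solution $x(r)$ and the two given, deterministic pasts; the $-v$ term, the $U'$ term, and the $\int_0^r$ memory term cancel identically because they are the same function of $(x,v)$ for both pasts. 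Your $\Delta(r)$, in contrast, is the difference of the drifts evaluated along \emph{two different solution processes} $(x,v)$ and $(\xtil,\vtil)$, so it contains $v-\vtil$, $U'(x)-U'(\xtil)$, and $H(x(r)-x(s))-H(\xtil(r)-\xtil(s))$, none of which belongs in the density: after the change of measure there is only one process in play. As a consequence, your verification step becomes a trajectorial synchronization claim (``$\Delta(r)\to 0$ geometrically in $L^2$''), which does not follow from Assumptions~\ref{cond:K}--\ref{cond:U}: those are coercivity and growth conditions on $U$, not a one-sided Lipschitz (dissipativity) condition on $U'$, and the nonlinear feedback through $U'$ and $H$ gives no contraction for the difference equation. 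The paper neither proves nor needs such synchronization; its uniqueness argument goes through Birkhoff and ergodic decomposition precisely so that trajectories need never merge.

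A secondary point you leave open: even with the correct discrepancy, its integrand still contains $|x(r)|^{p_1}$ through the growth bound on $H$, and $x(r)$ is a random, a priori unbounded process, so Novikov's condition on $[0,\infty)$ is not immediate from moment bounds alone. The paper's remedy is a cutoff: it proves $Q_{[0,\infty)}(\xi_0,\C_\varrho[0,\infty))=1$ (Lemma~\ref{lem:Q_infty(C_rho)=1}), decomposes $\C_\varrho[0,\infty)=\bigcup_n\C_{\varrho,n}[0,\infty)$, and passes to the modified SDE~\eqref{eqn:droplet:truncate:Q_infty:equivalence} in which the $(-\infty,0]$ memory term carries the factor $\theta_n\big(x(t)/(1+t^\varrho)\big)$. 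That system coincides with~\eqref{eqn:droplet} on $\C_{\varrho,n}[0,\infty)$, and the cutoff makes the Girsanov integrand \emph{deterministically} square-integrable over $[0,\infty)$, so Novikov holds trivially and the equivalence of the truncated laws follows; equivalence of the full laws then follows because $\C_\varrho[0,\infty)$ has full measure under both. Replacing your $\Delta$ with the memory-only discrepancy above and inserting this truncation step turns your outline into the paper's proof.
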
 

For the sake of clarity, we defer the proof of Lemmas~\ref{lem:Lebesgue} and \ref{lem:Q_infty:equivalence} to the end of this section. We are now in a position to conclude Theorem~\ref{thm:unique}, whose argument is adapted from~\cite[Theorem 2.2]{bakhtin2005stationary} tailored to our setting. See also \cite{weinan2002gibbsian,weinan2001gibbsian,
herzog2023gibbsian,mattingly2002exponential}.
\begin{proof}[Proof of Theorem~\ref{thm:unique}]
Let $\mu_1$ and $\mu_2$ be two invariant probability measures. Without loss of generality, we may assume that they are ergodic by ergodic decomposition (see e.g., \cite[Chapter 1.3]{cornfeld2012ergodic} and \cite[Theorem 5.1.3]{viana2016foundations}).

Fixing an arbitrary bounded function $\f: C((-\infty,0];\rbb^2)\to \rbb$ whose argument only depends on some compact set of time, Birkhoff's Ergodic Theorem implies that there exist sets $A_i\subset \C_\varrho(-\infty,0]$, $i=1,2,$ such that $\mu_i(A_i)=1$ and that
\begin{align} \label{lim:ergodic}
\lim_{T\rightarrow\infty}\frac{1}{T}\int_0^T\close \f\left( \theta_t\xi_{(-\infty,t]} \right)\d t= \int_{\C_\varrho(-\infty,0]}\close\close\close\f(\zeta)\mu_i(\d\zeta)=:\f_i
\end{align}
holds a.s. for all initial pasts $\xi_0\in A_i$. To establish that $\mu_1=\mu_2$, it suffices to prove that $\f_1=\f_2$.

Let $\pi_0\mu_i$ be the marginal of $\mu_i$ at time 0. Observe that 
\begin{align*}
A_i\subset \pi_0^{-1} \pi_0 A_i =\{\xi\in \C(-\infty,0]:\pi_0\xi\in \pi_0A_i\}.
\end{align*}
By definition of the marginal law, 
\begin{align*}
\pi_0\mu_i(\pi_0A_i)=\mu_i(\pi_0^{-1} \pi_0 A_i)\ge \mu_i(A_i)=1.
\end{align*}
In light of Lemma~\ref{lem:Lebesgue}, $\pi_0\mu_1\sim\pi_0\mu_2$, implying that $\pi_0\mu_i(\pi_0A_1\cap\pi_0A_2)=1$. In particular, $$\pi_0A_1\cap\pi_0A_2\neq \emptyset.$$ As a consequence, we may pick $\zeta_i\in A_i$ such that $\zeta_1(0)=\zeta_2(0)$. Furthermore, $\zeta_i$ is an initial data, whose empirical measures will converge to $\mu_i$ in the sense of \eqref{lim:ergodic}.

Recalling from~\eqref{form:Q_[0,infty)} that $Q_{[0,\infty)}(\zeta_i,\cdot)$ is the law of future paths associated with initial condition $\zeta_i$, let $B_i\subset \C[0,\infty)$ be such that~\eqref{lim:ergodic} holds for all paths in $B_i$. In particular, it holds that $Q_{[0,\infty)}(\zeta_i,B_i)=1$. Since $\zeta_1(0)=\zeta_2(0)$, we employ Lemma~\ref{lem:Q_infty:equivalence} to see that $Q_{[0,\infty)}(\zeta_1,\cdot)$ is equivalent to $Q_{[0,\infty)}(\zeta_2,\cdot)$, deducing that $Q_{[0,\infty)}(\zeta_i,B_1\cap B_2)=1$ for $i=1,2$. In particular, this implies that $B_1\cap B_2\neq \emptyset$. We thus may pick two different paths $\xi_i\in\C(-\infty,\infty)$ such that $\pi_{(-\infty,0]}\xi_i=\zeta_i$ and  $\pi_{[0,\infty)}\xi_1=\pi_{[0,\infty)}\xi_2\in B_1\cap B_2$. Applying~\eqref{lim:ergodic} to $\xi_i$, we obtain
\begin{align*}
\f_1=\lim_{T\to\infty}\frac{1}{T}\int_0^T \f( \theta_t\pi_{(-\infty,t]}\xi_1)\d t =\lim_{T\to\infty}\frac{1}{T}\int_0^T \f( \theta_t\pi_{(-\infty,t]}\xi_2)\d t=\f_2.
\end{align*}
As $\f$ is from a class of functions rich enough to determine the laws of $\mu_i$ for $i=1,2$, we conclude that their laws are the same. The proof is thus finished.
\end{proof}

We now turn to the proof of Lemma~\ref{lem:Lebesgue}. Due to the difficulty caused by the nonlinearities, we first modify~\eqref{eqn:droplet} as follows: let $\theta_R\in \C^\infty(\rbb)$ be a smooth cut-off function given by
\begin{equation} \label{form:theta_R(x)}
\theta_R(x)= \begin{cases}
1, & |x|\le R,\\
\text{monotonic},&R\le |x|\le R+1,\\
0,&|x|\ge R+1.
\end{cases}
\end{equation}
We consider the truncated system
\begin{align}
\d x(t)&=v(t)\d t,\notag\\
\d v(t)& = -v(t)\d t-U'(x(t))\d t-\theta_n(x(t))\int_0^t \theta_n(x(s)) H(x(t)-x(s))K(t-s)\d s\d t\label{eqn:droplet:truncate:Lebesgue}\\
&\qquad+\theta_n(x(t))\int_{-\infty}^0\close H(x(t)-x(s))K(t-s)\d s\d t+\d W(t).\notag
\end{align}
Observe that on the right-hand side of $v-$equation of~\eqref{eqn:droplet:truncate:Lebesgue}, the nonlinear terms involving $H$ are now Lipschitz functions. In Lemma~\ref{lem:droplet:truncate:Lebesgue} below, we assert that the solution $(x_n,v_n)$ of~\eqref{eqn:droplet:truncate:Lebesgue} indeed converges to that of the original equation~\eqref{eqn:droplet} as $n\rightarrow\infty$. 
\begin{lemma} \label{lem:droplet:truncate:Lebesgue}
Let $(x_n(t),v_n(t))$ be the solution of~\eqref{eqn:droplet:truncate:Lebesgue} with initial condition $\xi_0\in \C_{K,p_1}(-\infty,0]$. Then for all $T>0$,
\begin{align}\label{ineq:Lebesgue:|x_n-x|}
\lim_{n\rightarrow\infty}\E\sup_{t\in[0,T]}|x_n(t)-x(t)|+|v_n(t)-v(t)|=0,
\end{align}
where $(x_n(t),v_n(t))$ is the solution of~\eqref{eqn:droplet} with initial past $\xi_0$.
\end{lemma}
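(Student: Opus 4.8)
The plan is to prove \eqref{ineq:Lebesgue:|x_n-x|} by a Gronwall-type argument on the difference process, combined with the moment bounds from Lemma~\ref{lem:moment-bound} and a stopping-time localization that allows us to remove the cut-off. First I would fix $T>0$ and introduce, for each $R>0$, the stopping time
\begin{align*}
\tau_R = \inf\Big\{ t\ge 0 : \sup_{r\le t}|x(r)| \ge R \text{ or } \sup_{r\le t}|x_n(r)| \ge R \Big\},
\end{align*}
and note that on the event $\{t < \tau_n\}$ we have $\theta_n(x(r)) = \theta_n(x_n(r)) = 1$ for all $r\le t$, so that up to time $\tau_n$ the truncated equation \eqref{eqn:droplet:truncate:Lebesgue} and the original equation \eqref{eqn:droplet} have formally identical integrands (the memory integral splits into the $(-\infty,0]$ part, which is unchanged since the initial past is fixed, and the $[0,t]$ part, which carries the factor $\theta_n(x_n(r))\theta_n(x_n(s))$ or $\theta_n(x(r))\theta_n(x(s))$, both equal to $1$ before $\tau_n$).

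Next I would estimate the difference. Writing $e_x(t) = x_n(t)-x(t)$, $e_v(t)=v_n(t)-v(t)$, both vanish for $t\le 0$, and for $t\le \tau_n$ one has, by subtracting the integral forms in Definition~\ref{def:solution},
\begin{align*}
e_x(t) &= \int_0^t e_v(r)\d r,\\
e_v(t) &= \int_0^t \Big[ -e_v(r) - \big(U'(x_n(r))-U'(x(r))\big) - D_H(r) \Big]\d r,
\end{align*}
where $D_H(r)$ is the difference of the two (now Lipschitz, before $\tau_n$) pilot-wave terms. The Brownian terms cancel exactly, which is the key point that makes this a pathwise, rather than merely an $L^2$, estimate. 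To control $U'(x_n)-U'(x)$ and $D_H(r)$ one uses that on $[0,\tau_n]$ all trajectories stay in $[-R,R]$ with $R$ possibly large but finite: on such a set $U'$ is Lipschitz (it is $C^1$), and $H$ composed with the differences together with the $L^1$ kernel gives a bound of the form $C_R \int_0^r (|e_x(r)| + |e_x(s)|)K(r-s)\d s \le C_R \sup_{0\le u\le r}|e_x(u)|$. Feeding all this into a supremum over $[0,t\wedge\tau_n]$ and applying Gr\"onwall's inequality yields $\E \sup_{t\le T\wedge\tau_n}\big(|e_x(t)|+|e_v(t)|\big) \le$ (something involving only the deterministic structure) $\times\, \E[\,\mathbf{1}_{\{\tau_n < T\}}\cdot(\text{bounded overshoot})\,]$; more carefully, since the two processes agree on $[0,\tau_n]$, the difference is actually zero on $\{\tau_n\ge T\}$, so
\begin{align*}
\E\sup_{t\in[0,T]}\big(|e_x(t)|+|e_v(t)|\big) = \E\Big[\mathbf{1}_{\{\tau_n < T\}}\sup_{t\in[0,T]}\big(|e_x(t)|+|e_v(t)|\big)\Big].
\end{align*}

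The remaining step is to show the right-hand side tends to $0$ as $n\to\infty$. I would apply Cauchy--Schwarz to split it into $\P(\tau_n < T)^{1/2}$ times the $L^2$-norm of $\sup_{t\in[0,T]}(|e_x(t)|+|e_v(t)|)$; the latter is bounded uniformly in $n$ by \eqref{ineq:moment-bound:sup_[t,t+1]} applied to both $(x,v)$ and $(x_n,v_n)$ (the truncated system also satisfies the Lemma~\ref{lem:moment-bound} estimates, or at least the crude second-moment bound needed here, with constants uniform in $n$ since the truncation only decreases the memory force); and $\P(\tau_n < T) \to 0$ because $\P(\sup_{t\in[0,T]}|x(t)|\ge n) \le n^{-p}\,\E\sup_{t\in[0,T]}|x(t)|^p \to 0$ by \eqref{ineq:moment-bound:sup_[t,t+1]}, and likewise for $x_n$. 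Combining these gives \eqref{ineq:Lebesgue:|x_n-x|}. The main obstacle is obtaining the moment bound for the truncated process $(x_n,v_n)$ with constants independent of $n$ that is strong enough to make $\E\sup_{[0,T]}(|e_x|+|e_v|)^2$ uniformly bounded; I expect this follows by repeating the energy estimate of Lemma~\ref{lem:moment-bound}, using that the cut-offs $\theta_n$ are bounded by $1$ so every term involving $H$ in the It\^o computation is dominated exactly as before, hence the same Lyapunov inequality holds uniformly in $n$. A secondary technical point is that $\tau_R$ must be shown to be a genuine stopping time and a.s.\ finite / increasing to $+\infty$, which is immediate from path continuity and the non-explosion guaranteed by Proposition~\ref{prop:well-posed}.
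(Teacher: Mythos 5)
You take essentially the same route as the paper. The key observations are identical: before the stopping time at which either trajectory exits the ball where $\theta_n\equiv 1$, the two equations coincide, so by pathwise uniqueness the solutions agree and the difference is supported on $\{\tau_n<T\}$; Cauchy--Schwarz (H\"older) then yields a factor $\P(\tau_n<T)^{1/2}$ times an $L^2$-norm, and Markov's inequality plus uniform-in-$n$ moment bounds close the argument. The Gr\"onwall excursion on the difference process is redundant, as you yourself recognize mid-stream; the paper skips it entirely.

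There is, however, one genuine gap. You invoke \eqref{ineq:moment-bound:sup_[t,t+1]}, i.e.\ part 3 of Lemma~\ref{lem:moment-bound}, to bound $\E\sup_{[0,T]}(|e_x|+|e_v|)^2$ and to estimate $\P(\tau_n<T)$. But Lemma~\ref{lem:moment-bound} is stated under the hypothesis $\xi_0\in\C\Kptwo(-\infty,0]$ with $p_2>2p_1$, whereas the statement you are proving only assumes $\xi_0\in\C_{K,p_1}(-\infty,0]$. Since $p_2>p_1$, the condition $\|x_0\|\Kptwo<\infty$ is genuinely stronger for pasts that grow (one can pick $x_0$ growing fast enough that $\|x_0\|\Kp<\infty$ but $\|x_0\|\Kptwo=\infty$; the case $p_1=0$, relevant for the Bessel pilot-wave, makes this particularly stark, as then $\C\Kp(-\infty,0]=\C(-\infty,0]$). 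So Lemma~\ref{lem:moment-bound} cannot be used off the shelf. Your instinct that the needed estimate ``follows by repeating the energy estimate'' is correct, but what must be repeated is a \emph{cruder finite-time} version: apply It\^o to $\Phi(x_n,v_n)$ using $\theta_n\in[0,1]$, control the martingale by Burkholder, bound the memory integral via \eqref{cond:H:1} and $\|x_0\|\Kp$ alone, and run Gr\"onwall on $[0,T]$ to get
\begin{align*}
\E\sup_{r\in[0,T]}\Phi(x_n(r),v_n(r))\le\bigl(\Phi(\xi_0(0))+\|x_0\|_\Kp^2+1\bigr)e^{cT},
\end{align*}
with $c$ independent of $n$, and the analogous bound for $(x,v)$. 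This is precisely what the paper does; the constant grows like $e^{cT}$ rather than being uniform in time, but that is all that is needed for fixed $T$, and it avoids the $p_2$ hypothesis that the full Lyapunov machinery of Lemma~\ref{lem:moment-bound} requires.
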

The proof of Lemma~\ref{lem:droplet:truncate:Lebesgue} is somewhat standard and will be provided at the end of this section. We now give the proof of Lemma~\ref{lem:Lebesgue}, whose argument is similar to that of \cite[Proposition 4.5]{herzog2023gibbsian}. See also \cite{bakhtin2005stationary,weinan2001gibbsian,weinan2002gibbsian}.
\begin{proof}[Proof of Lemma~\ref{lem:Lebesgue}]
Let $(x(t),v(t))$ and $(x_n(t),v_n(t))$ be solutions to Eqs.~\eqref{eqn:droplet} and~\eqref{eqn:droplet:truncate:Lebesgue}, respectively, with initial condition $\xi_0$, and let $Q_{t}(\xi_0,\cdot)$ and $Q_{t}^n(\xi_0,\cdot)$ be their laws in $\rbb^2$. By the invariant property, for $A\subset \rbb^2$
\begin{align*}
\pi_0\mu(A) = \int_{\C(-\infty,0]}\close\close Q_t(\zeta,A)\mu(\d\zeta).
\end{align*}
It therefore suffices to prove that $Q_t(\xi_0,\cdot)$ is equivalent to the Lebesgue measure on $\rbb^2$.

For all $r\in[0,t]$, we invoke~\eqref{cond:H:1} together with the facts that $\theta_n(x)\in[0,1]$ and $\theta_n(x)|x|^p\le (n+1)^p$ to estimate
\begin{align*}
&\Big|\theta_n(x_n(r))\int_0^r \theta_n(x_n(s)) H(x_n(r)-x_n(s))K(r-s)\d s\Big|\\
&\le c\,\theta_n(x(r))\int_0^r \theta_n(x_n(s))  (|x_n(r)|^p+|x_n(s)|^p+1)K(r-s)\d s\\
&\le c\,\big((n+1)^p+1\big)\int_0^r K(r-s)\d s  \\
&\le c\,\big((n+1)^p+1\big)\|K\|_{L^1(\rbb^+)},
\end{align*}
which is a deterministic constant. Likewise,
\begin{align*}
&\Big|\theta_n(x_n(r))\int_{-\infty}^0\close H(x_n(r)-x_n(s))K(r-s)\d s\Big|\\
&\le c\,\theta_n(x_n(r))\int_{-\infty}^0\close (|x_n(r)|^p+|x_n(s)|^p+1)K(r-s)\d s\\
&\le c\,\Big[((n+1)^p+1)\|K\|_{L^1(\rbb^+)}+ \int_{-\infty}^0\close |x_0(s)|^pK(-s)\d s\Big].
\end{align*}
We therefore deduce that the following Novikov's condition is verified:
\begin{align*}
&\E\Big[\exp\Big\{\tfrac{1}{2}\int_0^t \Big(\theta_n(x_n(r))\int_0^r \theta_n(x_n(s)) H(x_n(r)-x_n(s))K(r-s)\d s\\
&\qquad\qquad+\theta_n(x_n(r))\int_{-\infty}^0\close H(x_n(r)-x_n(s))K(r-s)\d s\Big)^2\d r\Big\}\Big]\le c(n,\xi_0).
\end{align*}
In light of Girsanov's Theorem, $Q_{[0,t]}^n(\xi_0,\cdot)$, the law in $C([0,t];\rbb^2)$ of $(x_n(t),v_n(t))$, is equivalent to the law $\Qtilde_{[0,t]}(\xi_0(0),\cdot)$ induced by the solution of the Langevin equation
\begin{alignat*}{2}
\d x(t)&= v(t)\d t, & x(0)&=x_0(0),\\
\d v(t)&= -v(t)\d t-U'(t)\d t+\d W(t),\quad& v(0)&=v_0(0).
\end{alignat*}
By verifying H\"{o}rmander's condition, it is well-known that the law $\Qtilde_{t}(\xi_0(0),\cdot)$ as the marginal at time $t$ of $\Qtilde_{[0,t]}(\xi_0(0),\cdot)$ is equivalent to the Lebesgue measure in $\rbb^2$ \cite{mattingly2002ergodicity,pavliotis2014stochastic}. This together with the Girsanov argument above implies that $Q_{t}^n(\xi_0,\cdot)$ is too. By taking $n$ to infinity, in view of Lemma~\ref{lem:droplet:truncate:Lebesgue}, we obtain the convergence in law $Q_{t}^n(\xi_0,\cdot)\to Q_{t}(\xi_0,\cdot)$, which preserves measure equivalence. The proof is thus complete.
\end{proof}

We now provide the proof of Lemma~\ref{lem:droplet:truncate:Lebesgue}, which completes the argument for Lemma~\ref{lem:Lebesgue}.

\begin{proof}[Proof of Lemma~\ref{lem:droplet:truncate:Lebesgue}]
Recalling the definition of $\Phi$ in~\eqref{form:Phi(x,v)}, we proceed using the strategy in the proof of Lemma~\ref{lem:moment-bound}. It\^o's formula from~\eqref{eqn:droplet:truncate:Lebesgue} gives
\begin{equation*}
\begin{aligned}
&\d \Phi\big(x_n(t),v_n(t)\big)\\
 &=-v_n(t)^2\d t -v_n(t)\theta_n(x_n(t))\int_{0}^t\theta_n(x_n(s)) H(x_n(t)-x_n(s))K(t-s)\d s\d t\\
&\qquad-v_n(t)\theta_n(x_n(t))\int_{-\infty}^0\close H(x_n(t)-x_n(s))K(t-s)\d s\d t\\
&\qquad+v_n(t)\d W(t)+\tfrac{1}{2}\d t.
\end{aligned}
\end{equation*}
Since $\theta_n(x)\in [0,1]$, we may infer the following bound using the Young inequality:
\begin{align}
\Phi\big(x_n(t),v_n(t)\big)&\le \Phi\big(x_0(0),v_0(0)\big)+\int_0^t v_n(r)^2\d r+\Big|\int_0^t v_n(r)\d W(r)\Big|+\tfrac{1}{2}t   \notag\\
&\qquad+ \int_0^t\Big|\int_{-\infty}^r H(x_n(r)-x_n(s))K(r-s)\d s\Big|^2\d r.\label{ineq:Ito:Phi:Lebesgue}
\end{align}
Burkholder's inequality gives
\begin{align*}
\E\sup_{r\in[0,t]}\Big|\int_0^r v(\ell)\d W(\ell)\Big|\le\Big| \int_0^t \E\, v(r)^2\d r\Big|^{1/2}\le \int_0^t \E\sup_{\ell\in[0,r]}v(\ell)^2\d r+1.
\end{align*}
To bound the integral involving $H$ on the right-hand side of~\eqref{ineq:Ito:Phi:Lebesgue}, we invoke~\eqref{cond:H:1} to see that
\begin{align*}
&\Big|\int_{-\infty}^r H(x_n(r)-x_n(s))K(r-s)\d s\Big|^2\\
&\le \Big|\int_{-\infty}^r (|x_n(r)|^{p_1}+|x_n(s)|^{p_1}+1)K(r-s)\d s\Big|^2\\
&\le c\Big|\int_0^r (|x_n(r)|^{p_1}+|x_n(s)|^{p_1}+1)K(r-s)\d s\Big|^2+c\Big|\int_{-\infty}^0 (|x_n(r)|^{p_1}+|x_n(s)|^{p_1}+1)K(r-s)\d s\Big|^2\\
&\le c\,\big(\sup_{\ell\in[0,r]}|x_n(\ell)|^{2p_1}+1+\|x_0\|^2_\Kp\big).
\end{align*}
It follows from~\eqref{ineq:Ito:Phi:Lebesgue} that
\begin{align*}
&\E\sup_{r\in[0,t]}\Phi\big(x_n(r),v_n(r)\big)\\
&\le \Phi\big(x_0(0),v_0(0)\big)+ \int_0^t \E\Big[\sup_{\ell\in[0,r]}|x_n(\ell)|^{2p_1}+v(\ell)^2\Big]\d r+c\,(\|x_0\|^2_\Kp+1)t.
\end{align*}
Since $U(x)$ dominates $x^{2p_1}$ by virtue of condition~\eqref{cond:U:1}, Gr\"{o}nwall's inequality then yields
\begin{align}\label{ineq:Ito:Phi:Lebesgue:sup_[0,t]}
\E\sup_{r\in[0,t]}\Phi\big(x_n(r),v_n(r)\big)\le \big(\Phi\big(x_0(0),v_0(0)\big)+\|x_0\|^2_\Kp+1\big)e^{cT},\quad t\in [0,T].
\end{align}
In the above, we emphasize that $c>0$ does not depend on $n$.
Using the same argument, we also deduce the estimate
\begin{align}\label{ineq:Phi:sup_[0,t]}
\E\sup_{r\in[0,t]}\Phi\big(x(r),v(r)\big)\le \big(\Phi\big(x_0(0),v_0(0)\big)+\|x_0\|^2_\Kp+1\big)e^{cT},\quad t\in [0,T].
\end{align}

Returning to~\eqref{ineq:Lebesgue:|x_n-x|}, we introduce the stopping times
\begin{align*}
\sigma_n=\inf\{t\ge 0: |x(t)|+|v(t)|>n\},\quad\sigmatilde_n=\inf\{t\ge 0: |x_n(t)|+|v_n(t)|>n\}.
\end{align*}
Observe that for $0\le t\le \sigma_n\mi\sigmatilde_n$, $x_n(t)=x(t)$ and $v_n(t)=v(t)$. We then have the following chain of implications:
\begin{align*}
&\E\sup_{t\in[0,T]}|x_n(t)-x(t)|+|v_n(t)-v(t)|\\
&=\E\Big[ \textbf{1}\{\sigma_n\mi\sigmatilde_n<T\}\sup_{t\in[0,T]}|x_n(t)-x(t)|+|v_n(t)-v(t)|\Big]\\
&\le \E\Big[ \Big(\textbf{1}\{\sigma_n<T\}+\textbf{1}\{\sigmatilde_n<T\}\Big)\sup_{t\in[0,T]}|x_n(t)-x(t)|+|v_n(t)-v(t)|\Big]\\
&\le c\Big(\P(\sigma_n<T)^{1/2}+\P(\sigmatilde_n<T)^{1/2}\Big)\Big[\E\sup_{t\in[0,T]}x_n(t)^2+x(t)^2+v_n(t)^2+v(t)^2\Big]^{1/2},
\end{align*}
where we employed H\"{o}lder's inequality in the last estimate above. Furthermore, the probabilities therein may be bounded using Markov's inequality:
\begin{align*}
\P(\sigmatilde_n<T)&=\P(\sup_{t\in[0,T]}|x_n(t)|+|v_n(t)|>n)\le\frac{2}{n^2} \E\Big[\sup_{t\in[0,T]}x_n(t)^2+v_n(t)^2\Big].
\end{align*}
Likewise,
\begin{align*}
\P(\sigma_n<T)&=\P(\sup_{t\in[0,T]}|x(t)|+|v(t)|>n)\le\frac{2}{n^2} \E\Big[\sup_{t\in[0,T]}x(t)^2+v(t)^2\Big].
\end{align*}
Altogether, in light of~\eqref{ineq:Ito:Phi:Lebesgue:sup_[0,t]}-\eqref{ineq:Phi:sup_[0,t]} and the fact that $$a^2+b^2\le c(U(a)+\tfrac{1}{2}b^2+1),$$
we arrive at the estimate
\begin{align*}
\E\sup_{t\in[0,T]}|x_n(t)-x(t)|+|v_n(t)-v(t)| \le \frac{c(T)}{n},
\end{align*}
which clearly converges to zero as $n$ tends to infinity. The proof is thus finished.
\end{proof}

We proceed by proving Lemma~\ref{lem:Q_infty:equivalence}, which concerns the equivalence in law of future paths in $[0,\infty)$. To this end, we aim to employ Girsanov's Theorem on the infinite time horizon to ensure the validity of any change of measures. Doing so relies on the following auxiliary lemma, which asserts that future laws consist of paths with moderate growth a.s.

\begin{lemma}\label{lem:Q_infty(C_rho)=1}
Suppose $\xi_0=(x_0,v_0)\in \C\Kptwo(-\infty,0]$ where $p_2$ is as in~\eqref{cond:p_2}. Let $\xi(t)=(x(t),v(t))$ be the solution of~\eqref{eqn:droplet} with the initial condition $\xi_0$. Then, for all $\varrho>0$
\begin{align} \label{eqn:Q_infty(C_rho)=1}
Q_{[0,\infty)}\big(\xi_0,\C_{\varrho}[0,\infty)\big)=1.
\end{align}
\end{lemma}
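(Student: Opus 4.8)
The plan is to show that, almost surely under $Q_{[0,\infty)}(\xi_0,\cdot)$, a typical path $\xi=(x,v)$ on $[0,\infty)$ satisfies the growth bound $\sup_{t\ge 0}\frac{|x(t)|+|v(t)|}{1+t^\varrho}<\infty$, for every fixed $\varrho>0$. As in the proof of \eqref{ineq:mu(C_rho)=1} in Theorem~\ref{thm:existence}, the natural tool is a Borel--Cantelli argument applied over the integer-indexed blocks $[n,n+1]$. Specifically, it suffices to prove
\begin{align*}
\sum_{n\ge 0}\P\Big(\sup_{r\in[n,n+1]}|x(r)|+|v(r)|>(1+n)^\varrho\ \Big|\ \xi_{(-\infty,0]}=\xi_0\Big)<\infty,
\end{align*}
since then, $\P$-a.s., only finitely many of these events occur, which forces the path to lie in $\C_\varrho[0,\infty)$; pushing this forward through the solution map gives $Q_{[0,\infty)}(\xi_0,\C_\varrho[0,\infty))=1$.

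First I would bound each summand by Markov's inequality: for any $p>0$,
\begin{align*}
\P\Big(\sup_{r\in[n,n+1]}|x(r)|+|v(r)|>(1+n)^\varrho\Big)\le \frac{c}{(1+n)^{\varrho p}}\,\E\sup_{r\in[n,n+1]}\big(|x(r)|^p+|v(r)|^p\big).
\end{align*}
The key input is then Lemma~\ref{lem:moment-bound}, part 3: estimate~\eqref{ineq:moment-bound:sup_[t,t+1]} gives $\E\sup_{r\in[n,n+1]}\Phi(\xi(r))^p\le c_3(\xi_0,p)$ uniformly in $n$, and since $\Phi(x,v)=U(x)+\tfrac12 v^2$ dominates $x^2+v^2$ by condition~\eqref{cond:U:1} (cf. Remark~\ref{remark:U}), this controls $\E\sup_{r\in[n,n+1]}(|x(r)|^p+|v(r)|^p)$ by a constant independent of $n$. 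Hence each summand is at most $c(\xi_0,p)(1+n)^{-\varrho p}$, and choosing $p>1/\varrho$ makes the series converge. This mirrors the final portion of the proof of Theorem~\ref{thm:existence} almost verbatim, the only difference being that here the "clock" runs forward in $t$ rather than backward, and we condition on the fixed initial past $\xi_0$ rather than integrating against the time-averaged measure $Q_T$.

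One technical point to handle carefully: Lemma~\ref{lem:moment-bound} is stated for initial conditions in $\C_{K,p_2}(-\infty,0]$, which is exactly the hypothesis of Lemma~\ref{lem:Q_infty(C_rho)=1}, so the moment bounds apply directly with no further work. The only mild obstacle is bookkeeping — making sure the constant $c_3$ in~\eqref{ineq:moment-bound:sup_[t,t+1]} is genuinely uniform in the block index $n$ (which it is, by the statement of the lemma) and that the conditional probabilities above are legitimately identified with $Q_{[0,\infty)}$-probabilities of the corresponding cylinder events via~\eqref{form:Q_[0,infty)}. Once these identifications are in place, the Borel--Cantelli step is immediate and the proof concludes.
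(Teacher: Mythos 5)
Your proof is correct and mirrors the paper's argument essentially line by line: both decompose $[0,\infty)$ into unit blocks $[k,k+1]$, apply Markov's inequality with a generic exponent $p$, invoke the uniform supremum moment bound~\eqref{ineq:moment-bound:sup_[t,t+1]} from Lemma~\ref{lem:moment-bound} (justified by the hypothesis $\xi_0\in\C\Kptwo(-\infty,0]$), choose $p>1/\varrho$ to make the series summable, and conclude by Borel--Cantelli. The only cosmetic difference is that you phrase the domination step as $\Phi$ dominating $x^2+v^2$, whereas the paper uses $|a|+|b|\le c(\Phi(a,b)+1)$ directly before raising to the $p$-th power; these are interchangeable.
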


Assuming the above result, whose proof is deferred to the end of this section, we are now in a position to conclude Lemma~\ref{lem:Q_infty:equivalence}.

\begin{proof}[Proof of Lemma~\ref{lem:Q_infty:equivalence}] Let $\xi_0$ and $\xitilde_0$ be two initial pasts in $\C_\varrho(-\infty,0]$ such that $\xi_0(0)=\xitilde_0(0)$. Given a set $B\in \C[0,\infty)$, we aim to prove that if $Q_{[0,\infty)}(\xi_0,B)=0$ then $Q_{[0,\infty)}(\xitilde_0,B)=0$. 

Observe that 
\begin{align*}
\C_\varrho[0,\infty)=\Big\{\zeta\in C([0,\infty);\rbb^2): \sup_{t\ge 0}\frac{|\pi_x\zeta(t)|+|\pi_v\zeta(t)|}{1+t^\varrho}<\infty\Big\}=\bigcup_{n\ge 1} \C_{\varrho,n}[0,\infty), 
\end{align*}
where
\begin{align*}
 \C_{\varrho,n}[0,\infty) = \Big\{\zeta\in C([0,\infty);\rbb^2): \sup_{t\ge 0}\frac{|\pi_x\zeta(t)|+|\pi_v\zeta(t)|}{1+t^\varrho}\le n\Big\}.
\end{align*}
Since
$$Q_{[0,\infty)}\big(\xi_0,\C_{\varrho}[0,\infty)\big)=1$$
for all $\varrho>0$, by virtue of Lemma~\ref{lem:Q_infty(C_rho)=1}, it suffices to prove that if $Q_{[0,\infty)}(\xi_0,B\cap \C_{\varrho,n}[0,\infty))=0$ then $Q_{[0,\infty)}(\xitilde_0,B\cap \C_{\varrho,n}[0,\infty))=0$.

Recalling the function $\theta_n$ in~\eqref{form:theta_R(x)}, we now consider the following truncated system:
\begin{align}
\d x(t)&=v(t)\d t,  \notag\\
\d v(t)& = -v(t)\d t-U'(x(t))\d t-\int_0^t H(x(t)-x(s))K(t-s)\d s\d t\label{eqn:droplet:truncate:Q_infty:equivalence}\\
&\qquad-\theta_n\Big(\tfrac{x(t)}{1+t^\varrho}\Big)\int_{-\infty}^0\close H(x(t)-x_0(s))K(t-s)\d s\d t+\d W(t).\notag
\end{align}
Denote by $\Qhat^n_{[0,\infty)}(\xi_0,\cdot)$ the law induced by the solution of~\eqref{eqn:droplet:truncate:Q_infty:equivalence} with initial condition $\xi_0=(x_0,v_0)$. Observe that the solutions of~\eqref{eqn:droplet} and~\eqref{eqn:droplet:truncate:Q_infty:equivalence} are the same in $\C_{\varrho,n}[0,\infty)$. We thus conclude that
$$Q_{[0,\infty)}(\xi_0,B\cap \C_{\varrho,n}[0,\infty))=\Qhat^n_{[0,\infty)}(\xi_0,B\cap \C_{\varrho,n}[0,\infty)).$$
It therefore remains to prove that
\begin{equation} \label{eqn:Qhat:equivalence}
\Qhat^n_{[0,\infty)}(\xi_0,\cdot)\sim \Qhat^n_{[0,\infty)}(\xitilde_0,\cdot).
\end{equation}
To this end, we recast~\eqref{eqn:droplet:truncate:Q_infty:equivalence} as 
\begin{align*}
\d x(t)&=v(t)\d t,\\
\d v(t)& = -v(t)\d t-U'(x(t))\d t-\int_0^t  H(x(t)-x(s))K(t-s)\d s\d t\\
&\qquad-\theta_n\Big(\tfrac{x(t)}{1+t^\varrho}\Big)\int_{-\infty}^0\close H(x(t)-\xtil_0(s))K(t-s)\d s\d t+\d W(t)\\
&\qquad +\theta_n\Big(\tfrac{x(t)}{1+t^\varrho}\Big)\int_{-\infty}^0 \Big[H(x(t)-\xtil_0(s)) -H(x(t)-x_0(s))\Big] K(t-s)\d s\d t.
\end{align*}
To verify Novikov's condition, we invoke~\eqref{cond:H:1} together with the assumption $\xi_0,\xitilde_0\in \C_{\varrho}(-\infty,0]$ to bound the last term above:
\begin{align*}
&\theta_n\Big(\tfrac{x(t)}{1+t^\varrho}\Big)\int_{-\infty}^0 \Big[H(x(t)-\xtil_0(s)) -H(x(t)-x_0(s))\Big] K(t-s)\d s\\
&\le c\,\theta_n\Big(\tfrac{x(t)}{1+t^\varrho}\Big)\int_{-\infty}^0\big( |x(t)|^{p_1}+|\xtil_0(s)|^{p_1}+|x_0(s)|^{p_1}+1\big) K(t-s)\d s \\
&\le c\int_{-\infty}^0\big( t^{p_1\varrho}+|s|^{p_1\varrho}+1\big) K(t-s)\d s \\
&=c\int_0^{\infty}\big( t^{p_1\varrho}+r^{p_1\varrho}+1\big) K(t+r)\d r.
\end{align*}
In the above right-hand side, we emphasize that the deterministic constant $c>0$ is independent of $t$ and $s$. It follows that
\begin{align}
&\int_0^\infty \Big| \theta_n\Big(\tfrac{x(t)}{1+t^\varrho}\Big)\int_{-\infty}^0 \Big[H(x(t)-\xtil_0(s)) -H(x(t)-x_0(s))\Big] K(t-s)\d s\Big|^2\d t\notag\\
&\le  c \int_0^\infty\Big|\int_{0}^\infty \big( t^{p_1\varrho}+r^{p_1\varrho}+1\big) K(t+r)\d r\Big|^2\d t.\label{form:integral-K:Girsanov}
\end{align}
Since $K$ has exponential decay as in Assumption~\ref{cond:K}, namely, $K(t)\le c\,e^{-\delta t}$, it holds that
\begin{align*}
&\int_0^\infty\Big|\int_{0}^\infty \big( t^{p_1\varrho}+r^{p_1\varrho}+1\big) K(t+r)\d r\Big|^2\d t\\
&\le c \int_0^\infty\Big|\int_{0}^\infty \big( t^{p_1\varrho}+r^{p_1\varrho}+1\big)e^{-\delta (t+r)}\d r\Big|^2\d t<c<\infty.
\end{align*}
We thus deduce that
\begin{align*}
&\E\Big[\exp\Big\{\tfrac{1}{2}\int_0^\infty \Big| \theta_n\Big(\tfrac{x(t)}{1+t^\varrho}\Big)\int_{-\infty}^0 \Big[H(x(t)-\xtil_0(s)) -H(x(t)-x_0(s))\Big] K(t-s)\d s\Big|^2\d t\Big\}\Big]\\
&< c<\infty,
\end{align*}
so Novikov's condition is verified. As a consequence of Girsanov's Theorem, we obtain the law equivalence~\eqref{eqn:Qhat:equivalence}, thereby finishing the proof.
\end{proof}

\begin{remark} \label{remark:K:exponential} In the proof of Lemma~\ref{lem:Q_infty:equivalence}, we note that $K$ need not have an exponential decay. In fact, the argument is valid as long as the integral on the right-hand side of~\eqref{form:integral-K:Girsanov} is finite. This in turn still holds true if $K$ admits power-law decay, i.e., 
\begin{align*}
K(t)\sim t^{-\alpha}, \quad t\to \infty,
\end{align*}
for some $\alpha>p_1\varrho$. However, as shown in Sections~\ref{sec:moment-bound} and \ref{sec:existence-uniqueness:existence}, the existence of an invariant measure $\mu$ relies on suitable estimates on $\eta(t)$, cf.~\eqref{ineq:|eta|_K}, which follows from the exponential decay of $K$ implied by Assumption~\ref{cond:K}.
\end{remark}

Finally, we provide the proof of Lemma~\ref{lem:Q_infty(C_rho)=1}, which together with the above lemmas concludes the proof of the uniqueness of the invariant probability measure $\mu$.

\begin{proof}[Proof of Lemma~\ref{lem:Q_infty(C_rho)=1}]
Fixing $\varrho>0$, by the Borel-Cantelli Lemma, it suffices to prove that
\begin{align*}
\sum_{k\ge 0} Q_{[0,\infty)}\Big(\xi_0,\sup_{r\in[k,k+1] } |x(r)|+|v(r)|> (1+k)^\varrho\Big)<\infty.
\end{align*}
For $p>0$ to be chosen later, we employ Markov's inequality to bound the above probability as 
\begin{align*}
Q_{[0,\infty)}\Big(\xi_0,\sup_{r\in[k,k+1] } |x(r)|+|v(r)|> (1+k)^\varrho\Big)&\le \frac{c}{(1+k)^{p\varrho}}\E\Big[\sup_{r\in[k,k+1] } |x(r)|^p+|v(r)|^p\Big]\\
&\le \frac{c}{(1+k)^{p\varrho}}\Big(\E\Big[\sup_{r\in[k,k+1] }\Phi(x(r),v(r))^p\Big]+1\Big),
\end{align*}
where, in the last implication above, we employed the fact that $$|a|+|b|\le c(U(a)+\tfrac{1}{2}b^2+1)=c(\Phi(a,b)+1).$$
Furthermore, in view of Lemma~\ref{lem:moment-bound}, cf.~\eqref{ineq:moment-bound:sup_[t,t+1]}, 
\begin{align*}
\E\Big[\sup_{r\in[k,k+1] }\Phi(x(r),v(r))^p\Big]\le c(\xi_0,p),
\end{align*}
whence
\begin{align*}
Q_{[0,\infty)}\Big(\xi_0,\sup_{r\in[k,k+1] } |x(r)|+|v(r)|> (1+k)^\varrho\Big)&\le \frac{c}{(1+k)^{p\varrho}},
\end{align*}
where the constant $c>0$ does not depend on $k$. By choosing $p$ sufficiently large, e.g., $p>1/\varrho$, we obtain
\begin{align*}
\sum_{k\ge 0} Q_{[0,\infty)}\Big(\xi_0,\sup_{r\in[k,k+1] } |x(r)|+|v(r)|> (1+k)^\varrho\Big)<\sum_{k\ge 0}\frac{c}{(1+k)^{p\varrho}}<\infty.
\end{align*}
The Borel--Cantelli Lemma then implies that
\begin{align*} 
Q_{[0,\infty)}\big(\xi_0,\C_{\varrho}[0,\infty)\big)=1,
\end{align*}
as claimed.
\end{proof}

\section{Numerical example of the invariant measure}\label{Sec:Numerics}

While Theorems~\ref{thm:existence} and~\ref{thm:unique} prove that Eq.~\eqref{StrobEq} has a unique invariant measure, they do not give information about the measure's qualitative structure. We conclude the paper by presenting a numerical simulation of Eq.~\eqref{StrobEq}. While the theoretical results in this paper are presented in one dimension for the sake of simplicity, we expect analogous results to hold in higher dimensions. We opt to present numerical results of a walking droplet in a harmonic potential in two dimensions, where $\boldsymbol{x}(t),\boldsymbol{v}(t)\in\mathbb{R}^2\times\mathbb{R}^2$, in order to make contact with the prior experimental~\cite{Perrard2014,Perrard2014a,Perrard2018} and numerical~\cite{Labousse2016a,Kurianski2017} literature on that system.

To that end, we employ the functional forms of the pilot-wave force $H(\boldsymbol{x}) = \mathrm{J}_1(2\pi |\boldsymbol{x}|)\frac{\boldsymbol{x}}{|\boldsymbol{x}|}$, memory kernel $K(t) = \mathrm{e}^{-t}$ (c.f. Remark~\ref{remark:H}) and potential $U(\boldsymbol{x}) = \frac{1}{2}k|\boldsymbol{x}|^2$, fixing the dimensionless droplet mass $\kappa = 0.42$, wave force coefficient $\alpha = 4.47$, constant $k= 0.35$ and noise strength $\sigma = 0.08$. The values of $\kappa$ and $\alpha$ are chosen to be within the parameter regime explored in typical experiments~\cite{Molacek2013b,Oza2013}. Equation~\eqref{StrobEq} is solved using an Euler-Maruyama time-stepping scheme with time step $\Delta t = 2^{-6}$ up to a final time $t_{\text{max}} = 10^5$, with the integral computed using the trapezoidal rule. As a check, we ran a simulation with a smaller time step, $\Delta t = 2^{-7}$, and found that the computed invariant measure did not change appreciably.

Prior work~\cite{Labousse2016a} showed that, in the absence of noise ($\sigma=0$), Eq.~\eqref{StrobEq} admits circular orbit solutions of the form $\boldsymbol{x}(t) \equiv (x(t),y(t))=r_0(\cos\omega t,\sin\omega t)$ and $\boldsymbol{v}(t) = \dot{\boldsymbol{x}}$, where the constants $r_0$ (orbital radius) and $\omega$ (angular frequency) solve a system of two algebraic equations:
\begin{align}
-\kappa r_0\omega^2=\alpha\int_0^{\infty}\text{J}_1\left(4\pi r_0\sin\frac{\omega z}{2}\right)\sin\frac{\omega z}{2}\mathrm{e}^{-z}\,\d z-k r_0,\quad
r_0\omega=\alpha \int_0^{\infty}\text{J}_1\left(4\pi r_0\sin\frac{\omega z}{2}\right)\cos\frac{\omega z}{2}\mathrm{e}^{-z}\,\d z.\nonumber
\end{align}
For the values of $\kappa$,  $\alpha$ and $k$ that we chose, there is a single orbital solution $(r_0,\omega)$ which is linearly stable to perturbations, as can be deduced from the results presented in~\cite{Labousse2016a}. To initialize the simulations, we set the {\it initial past} corresponding to the bouncing state $\boldsymbol{x}(t)\equiv 0$ for $t < 0$.
Figure~\ref{Fig:Numerics}(a--c) shows the numerical solution, and Fig.~\ref{Fig:Numerics}(d) (black curve) the radial position probability density function $p(r)$ of the droplet, where $r = |\boldsymbol{x}|$. The probability density function evidently reaches a steady state, as its form does not change appreciably if the simulation time $t_{\text{max}}$ is doubled. 
 \begin{figure}[ht]
\begin{center}
\includegraphics[width=1\textwidth]{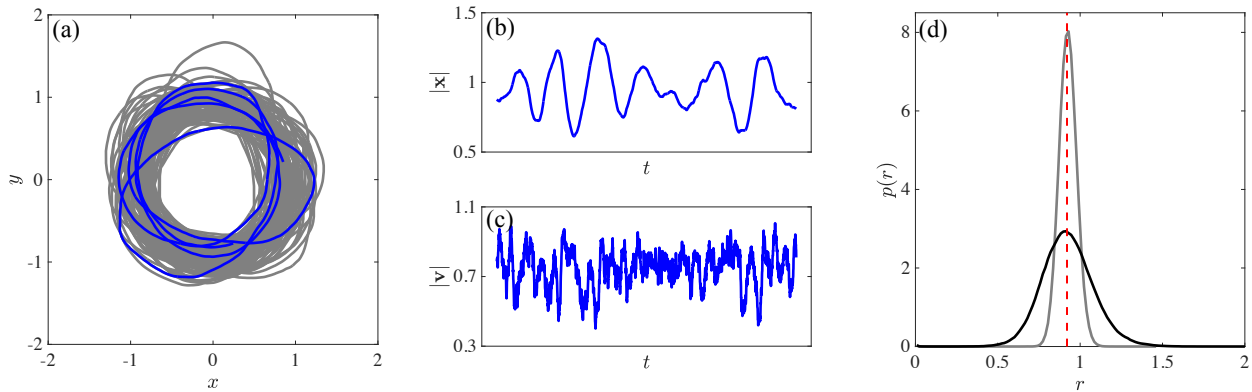}
 \end{center}
 \caption{(a) Numerical simulations of Eq.~\eqref{StrobEq} for $\kappa = 0.42$, $\alpha = 4.47$, $k= 0.35$ and $\sigma=0.08$. The gray curve shows the two-dimensional droplet position $\boldsymbol{x}(t)=(x(t),y(t))$ over the time interval $t_{\text{max}} -100T < t < t_{\text{max}}$, where $T = 2\pi/\omega$ is the period of the orbital solution of Eq.~\eqref{StrobEq} in the absence of noise ($\sigma=0$). The blue curve highlights the last (approximately) five orbital periods, $t_{\text{max}} - 5T < t < t_{\text{max}}$, and the corresponding time series for $|\boldsymbol{x}|$ and $|\boldsymbol{v}|$ are shown in panels (b) and (c), respectively. (d) The black curve shows the droplet's radial position probability density function $p(r)$, where $r = |\boldsymbol{x}|$. The gray curve shows the corresponding probability density for a smaller value of the noise, $\sigma = 0.03$. The dashed red line indicates the orbital radius $r_0$ defined in Section~\ref{Sec:Numerics}, which is linearly stable to perturbations~\cite{Labousse2016a}.
 }
 \label{Fig:Numerics}
 \end{figure}
 
From Fig.~\ref{Fig:Numerics}(d), it is evident that, after a sufficiently long time, the droplet's position probability distribution converges to a form with a peak centered at $r=r_0$. By comparing the black and gray curves, we deduce that the peak is sharper if the noise parameter is smaller, as is to be expected: for relatively weak noise, the trajectory remains close to the stable circular orbit that exists in the noiseless regime. For the values of $\kappa$, $\alpha$ and $k$ shown here, the invariant measure has a relatively simple form, but preliminary simulations (not shown) suggest that its qualitative shape can change as these parameters are varied.
 We leave the detailed characterization of the invariant measure of~\eqref{StrobEq} for future work. Moreover, we expect that results analogous to Theorem~\ref{thm:existence} and Theorem~\ref{thm:unique} can be obtained for other hydrodynamic pilot-wave models that have been proposed in the literature, such as discrete-time (iterated map) models for droplets in the presence~\cite{Gilet2014,Gilet2016,Durey2020_2} or absence~\cite{Fort2010,Durey2017} of boundaries, and models for multiple interacting droplets~\cite{Barnes2020,Couchman2020,Thomson2019,Thomson2020,Thomson2021,Couchman2022}.

\section*{acknowledgment} The authors would like to thank David Herzog and Jonathan Mattingly for helpful discussions on the topics of this work. The authors also would like to thank the anonymous reviewers for their valuable
comments and suggestions. AO acknowledges support from 
NSF DMS-2108839.

\appendix

\section{well-posedness of~\eqref{eqn:droplet}} \label{sec:well-posed}
In this section, we discuss the well-posedness and construct strong solutions of~\eqref{eqn:droplet}, provided the dynamics starts from suitable initial states. The argument is classical and is adapted from previous work \cite{bass1998diffusions,herzog2023gibbsian,oksendal2013stochastic} tailored to our setting. To begin the procedure, for each $N>0$, we consider the following truncated system:
\begin{align}
\d x(t)&=v(t)\d t,\notag\\
\d v(t)& = -v(t)\d t-U'(x(t))\d t-\int_{-N}^t\close H(x(t)-x(s))K(t-s)\d s\d t+\d W(t).\label{eqn:droplet:int_(-N)^t}
\end{align}
We note that system~\eqref{eqn:droplet:int_(-N)^t} only differs from~\eqref{eqn:droplet} by the truncation at $-N$ in the memory integral. The well-posedness argument now consists of two steps: we first establish the existence of a strong solution of~\eqref{eqn:droplet:int_(-N)^t}, denoted by $(x_N,v_N)
$. Then, for initial conditions $(x_0,v_0)\in\C_{K,p_1}(-\infty,0]$ as in~\eqref{form:C_K}, where $p_1$ is the constant in Assumption~\ref{cond:H}, we show that $(x_N,v_N)$ is a Cauchy sequence in $C([0,T];\rbb^2)$, which allows us to slide the truncating integral in~\eqref{eqn:droplet:int_(-N)^t} to negative infinity, thereby concluding the proof of the well-posedness of~\eqref{eqn:droplet}. 

We proceed by establishing the well-posedness of~\eqref{eqn:droplet:int_(-N)^t}: recalling the smooth cut-off function $\theta_R$ as in~\eqref{form:theta_R(x)} and setting
\begin{align*}
U_R(x)=U(x)\theta_R(x),
\end{align*}
we consider the approximating system
\begin{equation}\label{eqn:droplet:int_(-N)^t:U'_R}
\begin{aligned}
\d x(t)&=v(t)\d t,\\
\d v(t)& = -v(t)\d t-U'_R(x(t))\d t+\d W(t)\\
&\qquad-\theta_R(x(t))\int_0^t \theta_R(x(s)) H(x(t)-x(s))K(t-s)\d s\d t\\
&\qquad+\theta_R(x(t))\int_{-N}^0\close H(x(t)-x_0(s))K(t-s)\d s\d t.
\end{aligned}
\end{equation}
Observe that the above system~\eqref{eqn:droplet:int_(-N)^t:U'_R} is Lipschitz. The well-posedness of~\eqref{eqn:droplet:int_(-N)^t:U'_R} can be established by using standard procedures for Lipschitz systems~\cite{bass1998diffusions,oksendal2013stochastic}. We summarize the result in the following lemma whose proof is omitted.

\begin{lemma} \label{lem:well-posed:truncate:Lipschitz}
Under the hypotheses of Proposition~\ref{prop:well-posed}, for all $\xi_0\in \C(-\infty,0]$, there exists a unique strong path-wise solution of~\eqref{eqn:droplet:int_(-N)^t:U'_R}.
\end{lemma}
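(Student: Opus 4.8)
The plan is to solve \eqref{eqn:droplet:int_(-N)^t:U'_R} by a Picard iteration on the path space $C([0,T];\rbb^2)$, exploiting the fact that the cut-offs $\theta_R$ and the truncation $U_R=U\theta_R$ render every coefficient bounded and \emph{globally} Lipschitz. Since the noise enters additively, I would first reduce to the random ordinary integro-differential equation obtained by setting $u(t)=v(t)-W(t)$: for each fixed realization of the continuous path $W(\cdot)$ this becomes a deterministic Volterra equation, so existence, uniqueness and adaptedness can all be treated pathwise. Note also that, because the memory integral is cut off at $-N$, only $x_0|_{[-N,0]}$ enters, and this is automatically continuous and bounded; this is precisely why the lemma allows arbitrary $\xi_0\in\C(-\infty,0]$ with no growth restriction.

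Concretely, writing $\xi_0=(x_0,v_0)$ and fixing $T>0$, I would recast \eqref{eqn:droplet:int_(-N)^t:U'_R} in integral (Volterra) form: a path $(x,v)\in C([0,T];\rbb^2)$ solves it iff it is a fixed point of the map $\Gamma=(\Gamma_1,\Gamma_2)$ with $\Gamma_1(x,v)(t)=x_0(0)+\int_0^t v(r)\,\d r$ and
\begin{align*}
\Gamma_2(x,v)(t)&=v_0(0)+W(t)+\int_0^t\Big(-v(r)-U_R'(x(r))-\theta_R(x(r))\int_0^r\theta_R(x(s))H(x(r)-x(s))K(r-s)\,\d s\\
&\qquad\qquad\qquad+\theta_R(x(r))\int_{-N}^0 H(x(r)-x_0(s))K(r-s)\,\d s\Big)\d r.
\end{align*}
I would then check that $\Gamma$ maps $C([0,T];\rbb^2)$ into itself (continuity in $t$ of each memory integral follows from dominated convergence, all integrands being bounded once the cut-offs are in force, and using that $K$ is locally integrable by Assumption~\ref{cond:K}) and is globally Lipschitz in the supremum norm. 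The three drift contributions are handled separately: (i) $U_R'=(U\theta_R)'$ is a compactly supported continuous function, hence bounded and globally Lipschitz; (ii) the ``frozen past'' term $\theta_R(y)\int_{-N}^0 H(y-x_0(s))K(r-s)\,\d s$ is, as a function of $y\in\rbb$, bounded and Lipschitz uniformly in $r\in[0,T]$, since $x_0$ is continuous on the compact interval $[-N,0]$ and on the support $\{|y|\le R+1\}$ of $\theta_R$ the polynomial bound~\eqref{cond:H:1} on $H$ and $H'$ becomes a uniform bound; and (iii) the genuinely memory-dependent term equals $\int_0^r\Psi_R(x(r),x(s))K(r-s)\,\d s$ with $\Psi_R(a,b):=\theta_R(a)\theta_R(b)H(a-b)$ a bounded, globally Lipschitz function of $(a,b)\in\rbb^2$ (again by~\eqref{cond:H:1} restricted to the bounded set where $\theta_R\neq 0$), so that
\[
\Big|\int_0^r\big(\Psi_R(x(r),x(s))-\Psi_R(\tilde x(r),\tilde x(s))\big)K(r-s)\,\d s\Big|\le 2\,\mathrm{Lip}(\Psi_R)\,\|K\|_{L^1[0,T]}\sup_{s\le r}|x(s)-\tilde x(s)|.
\]

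With a uniform global Lipschitz bound in hand, I would conclude by a standard Banach fixed point argument: either iterate on a short interval $[0,T_0]$ whose length depends only on the Lipschitz constant — and hence, the constant being global, concatenate to cover all of $[0,\infty)$ with no finite-time blow-up — or, equivalently, equip $C([0,T];\rbb^2)$ with the weighted norm $\|f\|_\lambda=\sup_{t\le T}e^{-\lambda t}|f(t)|$ and choose $\lambda$ large so that $\Gamma$ is a contraction for every $T$. This produces a unique continuous fixed point; since each Picard iterate is a deterministic functional of $x_0,v_0$ and $W|_{[0,t]}$, hence adapted, and the iterates converge uniformly on compact time intervals almost surely, the limit is an adapted, therefore strong, solution. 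Pathwise uniqueness follows independently by Gr\"onwall's inequality applied to the difference of two solutions with the same initial condition, using the same Lipschitz estimates.

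The argument is entirely routine, which is why the authors omit it; the only point that requires any care is items (ii)–(iii). Because Assumption~\ref{cond:H} supplies only polynomial, not global Lipschitz, control on $H$ and $H'$, one must use the cut-offs $\theta_R$ to confine every argument of $H$ to a fixed bounded set before the Lipschitz estimates become available, and one must verify that the resulting constants are uniform over $r\in[0,T]$ — which uses only the local integrability of $K$ guaranteed by Assumption~\ref{cond:K}. There is no genuine obstacle.
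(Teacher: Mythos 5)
Your approach — reduce to a Volterra integral equation, observe that the cut-offs and truncations render the coefficients globally Lipschitz and bounded, and run a Picard iteration on $C([0,T];\rbb^2)$ (with a small-time or weighted-norm contraction) — is exactly the ``standard procedure for Lipschitz systems'' that the paper cites and explicitly declines to write out. Items (ii) and (iii) are verified correctly: the frozen-past term is Lipschitz in $y$ uniformly in $r$ because $x_0$ is continuous on the compact set $[-N,0]$ and $H,H'$ are continuous hence bounded on $\{|y|\le R+1\}$, and the function $\Psi_R(a,b)=\theta_R(a)\theta_R(b)H(a-b)$ is $C^1$ on $[-R-1,R+1]^2$ and vanishes identically off it, hence globally Lipschitz. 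The adaptedness and uniqueness remarks are also fine.

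There is, however, a genuine gap in item (i). You assert that $U_R'=(U\theta_R)'$ is ``a compactly supported continuous function, hence bounded and globally Lipschitz,'' but the implication from compactly supported continuous to Lipschitz is false in general (e.g.\ $x\mapsto |x|^{1/2}\theta_R(x)$ is compactly supported and continuous but not Lipschitz at the origin). Since Assumption~\ref{cond:U} only requires $U\in C^1$, the drift $U_R'=U'\theta_R+U\theta_R'$ is guaranteed continuous with compact support (hence bounded), but $U'$ itself need not be locally Lipschitz, and so $U_R'$ need not be. For the degenerate kinetic system at hand (noise enters only in $v$), a merely bounded continuous drift is not enough to make the Picard/Banach fixed-point argument go through, nor does pathwise uniqueness follow for free. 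To be fair, the paper makes the same unspoken jump (``Observe that the above system \ldots\ is Lipschitz'' is not literally warranted by $U\in C^1$), and in every application considered (quadratic, quartic, Bessel potentials) $U$ is smooth, so the issue is cosmetic — but as stated your item (i) is a non-sequitur. It should either invoke the (tacitly) stronger hypothesis that $U'$ is locally Lipschitz (e.g., $U\in C^2$), or replace the Picard treatment of the $U_R'$ term with a Peano/Girsanov existence argument together with a separate uniqueness argument that does not rely on Lipschitz continuity of $U'$.
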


We now remove the Lipschitz constraint by using a suitable energy estimate on \eqref{eqn:droplet:int_(-N)^t}.

\begin{lemma} \label{lem:well-posed:truncate}
Under the hypotheses of Proposition~\ref{prop:well-posed}, for all $\xi_0\in \C(-\infty,0]$, there exists a unique strong path-wise solution of~\eqref{eqn:droplet:int_(-N)^t}.
\end{lemma}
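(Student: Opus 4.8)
The plan is to bootstrap from Lemma~\ref{lem:well-posed:truncate:Lipschitz}, which gives a unique strong solution of the $R$-truncated system~\eqref{eqn:droplet:int_(-N)^t:U'_R} for every $R>0$, and to remove the cut-off $\theta_R$ by a localization argument driven by a uniform-in-$R$ energy estimate. First I would fix $N$ and $\xi_0=(x_0,v_0)\in\C(-\infty,0]$, let $(x_R,v_R)$ be the solution of~\eqref{eqn:droplet:int_(-N)^t:U'_R}, and introduce the stopping time $\tau_R=\inf\{t\ge 0:|x_R(t)|>R\}$. Since $\theta_R\equiv 1$ on $[-R,R]$ and $U_R=U$ there, the process $(x_R,v_R)$ coincides on $[0,\tau_R]$ with a solution of the un-truncated equation~\eqref{eqn:droplet:int_(-N)^t}; by pathwise uniqueness the family $\{(x_R,v_R)\}_R$ is consistent, i.e.\ $(x_{R'},v_{R'})=(x_R,v_R)$ on $[0,\tau_R\wedge\tau_{R'}]$ for $R'>R$. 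Hence the $\tau_R$ are nondecreasing and we may define a candidate solution $(x,v)$ on $[0,\tau_\infty)$ with $\tau_\infty=\lim_{R\to\infty}\tau_R$; it remains only to prove non-explosion, $\tau_\infty=\infty$ a.s.

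For non-explosion I would apply It\^o's formula to $\Phi(x_R,v_R)+\lambda x_R v_R$ exactly as in the proof of Lemma~\ref{lem:moment-bound}, but now with the finite memory integral $\int_{-N}^t$. The key simplification is that on $[0,\tau_R]$ the memory term splits as $\int_{-N}^0 H(x(t)-x_0(s))K(t-s)\,\d s$, which is bounded deterministically in terms of $\|x_0\|_{L^\infty[-N,0]}$ and $\|K\|_{L^1}$ using Assumption~\ref{cond:H}, plus $\int_0^t H(x(t)-x(s))K(t-s)\,\d s$, which is handled by the same $\varepsilon$-Young inequalities as in Section~\ref{sec:moment-bound} (no $\eta$-variable is needed here since the integral over $[0,t]$ can be crudely bounded by $\sup_{0\le s\le t}(|x(s)|^{p_1}+1)\|K\|_{L^1}$ and absorbed into $U$ via~\eqref{cond:U:1}). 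Using~\eqref{cond:U:2} to dominate $x_R U'(x_R)$ from below and~\eqref{cond:lambda} to keep $\Phi+\lambda xv$ comparable to $\Phi$, one obtains
\begin{align*}
\E\big[\Phi(x_R(t\wedge\tau_R),v_R(t\wedge\tau_R))\big]\le C\big(1+\Phi(x_0(0),v_0(0))+\|x_0\|_{L^\infty[-N,0]}^{\,q}\big)e^{Ct},\quad t\ge 0,
\end{align*}
with $C$ independent of $R$. Since $U(x)\ge a_3|x|^{2}$ forces $\Phi(x_R(t\wedge\tau_R),v_R(t\wedge\tau_R))\ge a_3 R^2$ on $\{\tau_R\le t\}$, Markov's inequality gives $\P(\tau_R\le t)\le C(t,\xi_0)/R^2\to 0$, so $\tau_\infty=\infty$ a.s.\ and $(x,v)$ is a global strong solution of~\eqref{eqn:droplet:int_(-N)^t}. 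Uniqueness is inherited directly: two solutions of~\eqref{eqn:droplet:int_(-N)^t} agree with solutions of~\eqref{eqn:droplet:int_(-N)^t:U'_R} up to their exit times from $[-R,R]$, and those exit times tend to infinity by the same estimate, so the two solutions coincide on every $[0,T]$.

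The main obstacle is obtaining the energy bound \emph{uniformly in} $R$, i.e.\ making sure the cut-off $\theta_R$ appearing inside the memory convolution in~\eqref{eqn:droplet:int_(-N)^t:U'_R} does not generate $R$-dependent constants. The point to check carefully is that $|\theta_R(x)|\le 1$ and $\theta_R(x)|x|^{p_1}\le (R+1)^{p_1}$ are the \emph{only} properties used, but we must avoid invoking the second bound in a way that feeds $R$ back into $C$; instead one keeps the factor $\theta_R(x_R(t))\,(\cdot)$ intact and estimates the integrand pointwise by $c(|x_R(t)|^{p_1}+|x_R(s)|^{p_1}+1)$ \emph{before} discarding $\theta_R$, exactly as in the displayed computation following~\eqref{cond:H:1}. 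With that care the constants depend only on $N$, $\xi_0$ and the structural constants of Assumptions~\ref{cond:K}--\ref{cond:U}, and the localization closes.
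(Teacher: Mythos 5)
Your proposal is correct and follows essentially the same strategy as the paper's proof: localization via the $\theta_R$ cut-off and the stopping times $\tau_R$, a uniform-in-$R$ energy estimate obtained by It\^o's formula together with Gr\"onwall's inequality (using that the memory integral splits into a deterministic piece over $[-N,0]$ and a piece over $[0,t]$ controlled by $\sup_{[0,t]}|x|^{p_1}$, and that $\theta_R$ is only ever used through $0\le\theta_R\le1$ so no $R$-dependence enters the constants), and Markov's inequality to deduce $\P(\tau_\infty=\infty)=1$. One small simplification used in the paper is to apply It\^o's formula to $\Phi$ alone rather than to $\Phi+\lambda xv$: for the a priori non-explosion bound one can discard the good drift via $-v^2-vM\le\tfrac14M^2$ and run Gr\"onwall without any decay, so the $\lambda xv$ correction (whose role in Lemma~\ref{lem:moment-bound} is to produce dissipation in $U(x)$) is unnecessary here, though including it does no harm.
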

\begin{proof} 

Denote by $(x^{R}_N,v_N^R)$ the solution of~\eqref{eqn:droplet:int_(-N)^t:U'_R} with initial condition $\xi_0\in \C(-\infty,0]$. We note that $(x^{R}_N,v_N^R)$ agrees with $(x_N,v_N)$, the solution of~\eqref{eqn:droplet:int_(-N)^t}, for all $t\le \sigma_N^R$ where $\sigma_N^R$ is the stopping time given by
\begin{equation} \label{form:sigma^R_N}
\sigma_N^R=\inf\{t\ge 0:|x_N(t)|>R\}.
\end{equation}
As a consequence, we may define the strong solution of~\eqref{eqn:droplet:int_(-N)^t} for all $t\le \sigma_N^\infty:=\lim_{R\to\infty}\sigma_N^R$ with $
\sigma_N^\infty$ possibly being finite. It therefore remains to prove that $\P(\sigma^\infty_N=\infty)=1$.

To this end, we apply It\^o's formula from~\eqref{eqn:droplet:int_(-N)^t} to $\Phi$ as defined in~\eqref{form:Phi(x,v)}: 
\begin{align*}
&\d \Phi(x_N(t),v_N(t))\\
&=\d \big(U(x_N(t))+\tfrac{1}{2}v_N(t)^2\big) \\
&= -v_N(t)^2\d t-v_N(t)\int_{-N}^t\close H(x_N(t)-x_N(s))K(t-s)\d s\d t+v_N(t)\d W(t)+\tfrac{1}{2}\d t\\
&\le \tfrac{1}{4}\Big(\int_{-N}^t\close H(x_N(t)-x_N(s))K(t-s)\d s\Big)^2\d t+v_N(t)\d W(t)+\tfrac{1}{2}\d t.
\end{align*}
Integrating the above equation with respect to time and taking the expectation, we obtain the following estimate in sup norm: 
\begin{align*}
\E\sup_{0\le r\le t}\Phi(x_N(r),v_N(r)) &\le \Phi(\xi(0))+\E \sup_{0\le r\le t}\int_0^r \Big(\int_{-N}^\ell \close H(x_N(\ell)-x_N(s))K(\ell-s)\d s\Big)^2\d\ell\\
&\qquad +\E\sup_{0\le r\le t}\Big|\int_0^r v_N(r)\d W(r)\Big|+\tfrac{1}{2}t.
\end{align*}
With regard to the martingale term, we employ Burkholder's inequality to see that
\begin{align*}
\E\sup_{0\le r\le t}\Big|\int_0^r v_N(r)\d W(r)\Big|\le c\Big(1+\E \int_0^t v_N(r)^2\d r\Big)\le  c\Big(1+\int_0^t \E \sup_{0\le \ell\le r}v_N(\ell)^2\d r\Big).
\end{align*}
Concerning the memory term, we use the elementary inequality $(a+b)^2\le 2(a^2+b^2)$ to see that
\begin{align*}
& \Big(\int_{-N}^\ell \close H(x_N(\ell)-x_N(s))K(\ell-s)\d s\Big)^2\\
&\le 2\Big(\int_{-N}^0 \close H(x_N(\ell)-x_N(s))K(\ell-s)\d s\Big)^2+2\Big(\int_0^\ell \close H(x_N(\ell)-x_N(s))K(\ell-s)\d s\Big)^2\\
&\equiv 2(I_1+I_2).
\end{align*}
To estimate $I_1$, we invoke Assumption~\ref{cond:H} to obtain the bound 
\begin{align*}
I_1&=\Big(\int_{-N}^0 \close H(x_N(\ell)-x_N(s))K(\ell-s)\d s\Big)^2\\
&\le c\Big(1+ |x_N(\ell)|^{p_1}+\int_{-N}^0|x_0(s)|^{p_1}K(\ell-s)\d s\Big)^2\\
&\le c\Big(1+|x_N(\ell)|^{2p_1}+\|x_0\|_{K,p_1}^2  \Big),
\end{align*}
where $c>0$ is independent of $N$.
Similarly, 
\begin{align*}
I_2&\le  \Big(\int_0^\ell c(1+|x_N(\ell)^{p_1}|+|x_N(s)|^{p_1})K(\ell-s)\d s\Big)^2\\
&\le c\Big(1+\sup_{0\le s\le \ell}x_N(s)^{2p_1}\Big) \Big|\int_0^\ell K(\ell-s)\d s\Big|^2\\
&\le c\|K\|^2_{L(\rbb^+)}\Big(1+\sup_{0\le s\le \ell}x_N(s)^{2p_1}\Big) .
\end{align*}
We thus infer the existence of a positive constant $c$ independent of $N$ such that
\begin{align*}
\Big(\int_{-N}^\ell \close H(x_N(\ell)-x_N(s))K(\ell-s)\d s\Big)^2\le c\Big(1+\sup_{0\le s\le \ell}x_N(s)^{2p_1}+\|x_0\|_{K,p_1}^2\Big),
\end{align*}
whence
\begin{align*}
&\E \sup_{0\le r\le t}\int_0^r \Big(\int_{-N}^\ell \close H(x_N(\ell)-x_N(s))K(\ell-s)\d s\Big)^2\d\ell\\
&\le c\int_0^t \E\sup_{0\le \ell\le r}x_N(\ell)^{2p_1}\d r+ct\|x_0\|_{K,p_1}^2.
\end{align*}
Altogether, we arrive at the bound
\begin{align*}
&\E\sup_{0\le r\le t}\Phi(x_N(r),v_N(r))\\ &\le \Phi(\xi(0))+c\int_0^t \E\Big[\sup_{0\le \ell\le r}x_N(\ell)^{2p_1}+v(\ell)^2\Big]\d r+ct\|x_0\|_{K,p_1}^2.
\end{align*}
Using Gr\"{o}nwall's inequality and the assumption that $U(x)$ dominates $x^{2p_1}$ (Assumption~\ref{cond:U}), we immediately obtain
\begin{equation} \label{ineq:E.sup.U(x_N)+v_N^2}
\E\sup_{0\le r\le t}\Phi(x_N(r),v_N(r)) \le \Big(\Phi(\xi(0))+t\|x_0\|_{K,p_1}^2\Big)e^{ct}.
\end{equation}
In the above, we emphasize that $c=c(K,t)$ does not depend on $R$ and $N$. Returning to $\sigma_N^R$, we note that
\begin{align*}
\E\sup_{0\le r\le t}\Phi(x_N(r),v_N(r)) &\ge \E\big[\sup_{0\le r\le t}\Phi(x_N(r),v_N(r)) \mathbf{1}\{\sigma_N^R<t\} \big]\ge R\P(\sigma^R_N<t),
\end{align*}
whence
\begin{align} \label{ineq:sigma^R_N}
\P(\sigma^R_N<t) \le \frac{(\Phi(\xi(0))+1)e^{ct}}{R}.
\end{align}
Sending $R$ to infinity yields $\P(\sigma^\infty_N<t)=0$, which holds for all $t$, implying $\P(\sigma^\infty_N=\infty)=1$. This finishes the proof.
\end{proof}

\begin{remark}
Following closely the proof of~\eqref{ineq:E.sup.U(x_N)+v_N^2}, we also establish the following estimate for the solution  $(x^R_N,v^R_N)$ of the Lipschitz system~\eqref{eqn:droplet:int_(-N)^t:U'_R}:
\begin{align}
&\E\Big[\sup_{0\le r\le t}|x_N^R(r)|^p+|v_N^R(r)|^p\Big]\notag\\
& \le c_1\Big(|x_0(0)|^p+|v_0(0)|^p+T\|x_0\|_{K,p_1}^p\Big)e^{c_2t} ,\quad t\in[0,T], \label{ineq:E.sup.(x_N^R)^2+(v_N^R)^2}
\end{align}
where $c_1=c_1(R,p), c_2=c_2(R,p)>0$ do not depend on $N$ and $T$. 
\end{remark}

\begin{lemma} \label{lem:well-posed:Cauchy}
Under the hypotheses of Proposition~\ref{prop:well-posed}, let $\xi_N=(x_N,v_N)$ be the solution on $C((-\infty,T];\rbb^2)$ of~\eqref{eqn:droplet:int_(-N)^t} with initial condition $\xi_0=(x_0,v_0)\in \C_{K,p_1}(-\infty,0]$. Then, as $N\to\infty$, $\xi_N$ converges to $\xi$ in $C([0,T];\rbb^2)$. Furthermore, $\xi$ is the unique solution of~\eqref{eqn:droplet} in $C((-\infty,T];\rbb^2)$ with initial condition~$\xi_0$.
\end{lemma}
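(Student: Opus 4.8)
The plan is to show that $\{\xi_N\}_N$ is Cauchy in $L^2\big(\Omega;C([0,T];\rbb^2)\big)$, to pass to the limit in the integral form of~\eqref{eqn:droplet:int_(-N)^t} to identify the limit $\xi$ as a solution of~\eqref{eqn:droplet} on $(-\infty,T]$, and then to run essentially the same estimate one last time to obtain uniqueness. The only genuine difficulties are that $H$ and $H'$ are merely of polynomial growth, cf.~\eqref{cond:H:1}, rather than Lipschitz or bounded, and that the initial past $x_0$ is unbounded, with only $\|x_0\|_{K,p_1}<\infty$. Both are handled by localizing in a spatial variable and by splitting the memory integral into its portion over $(-\infty,0]$, where the competing trajectories both agree with $x_0$, and its portion over $[0,t]$, where the solutions are bounded on the localization event.

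\smallskip\noindent\emph{Cauchy estimate.} For $N<M$ put $\Delta x=x_N-x_M$, $\Delta v=v_N-v_M$; since $\xi_N,\xi_M$ are driven by the same $W$ and coincide with $\xi_0$ on $(-\infty,0]$, the pair $(\Delta x,\Delta v)$ solves a noise-free ODE with vanishing data on $(-\infty,0]$. Introduce $\sigma_R=\inf\{t\ge0:|x_N(t)|\vee|x_M(t)|>R\}$. On $[0,t\mi\sigma_R]$ the difference of the two memory terms is the sum of three pieces: $\int_{-N}^{0}\close[H(x_N(r)-x_0(s))-H(x_M(r)-x_0(s))]K(r-s)\,\d s$, the corresponding $\int_{0}^{r}\close[H(x_N(r)-x_N(s))-H(x_M(r)-x_M(s))]K(r-s)\,\d s$, and a truncation tail $-\int_{-M}^{-N}\close H(x_M(r)-x_0(s))K(r-s)\,\d s$. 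Using $|H(a)-H(b)|\le|a-b|\sup|H'|$, the growth bound~\eqref{cond:H:1}, the boundedness of all trajectories by $R$ on the event in question, and the monotone decay of $K$ from Assumption~\ref{cond:K} (which gives $K(r-s)\le K(-s)$ for $r\ge0\ge s$), the first piece is bounded by $C_R|\Delta x(r)|\int_{-\infty}^{0}(|x_0(s)|^{p_1}+1)K(-s)\,\d s$, the second by $C_R\int_0^r(|\Delta x(r)|+|\Delta x(s)|)K(r-s)\,\d s$, and the tail by $C_R\,\varepsilon_N$ with $\varepsilon_N:=\int_{-\infty}^{-N}(|x_0(s)|^{p_1}+1)K(-s)\,\d s\downarrow0$. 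Combining these with the local Lipschitz bound on $U'$ available on $\{t\le\sigma_R\}$, Young's inequality and Gr\"onwall's inequality yields the pathwise bound $\sup_{[0,T\mi\sigma_R]}\big(|\Delta x|^2+|\Delta v|^2\big)\le C(R,T)\,\varepsilon_N^2$.

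\smallskip\noindent\emph{Removing the localization.} On $\{\sigma_R\le T\}$ one estimates, by the Cauchy--Schwarz inequality,
\[
\E\Big[\mathbf{1}\{\sigma_R\le T\}\sup_{[0,T]}\big(|\Delta x|^2+|\Delta v|^2\big)\Big]\le\P(\sigma_R\le T)^{1/2}\Big(\E\sup_{[0,T]}\big(|\Delta x|^2+|\Delta v|^2\big)^2\Big)^{1/2}.
\]
The second factor is bounded uniformly in $N,M$: following the derivation of~\eqref{ineq:E.sup.U(x_N)+v_N^2} with $\Phi^p$ in place of $\Phi$ and using that $U$ dominates $x^2$ and $|x|^{2p_1}$, cf.~\eqref{cond:U:1}, one gets $\E\sup_{[0,T]}(|x_N|^q+|v_N|^q)\le C(q,T,\xi_0)$ for every $q$; while $\P(\sigma_R\le T)\le C(T,\xi_0)/R^2$ by Markov's inequality together with the coercivity~\eqref{cond:U:1} and the uniform energy bound~\eqref{ineq:E.sup.U(x_N)+v_N^2}. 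Hence $\E\sup_{[0,T]}(|\Delta x|^2+|\Delta v|^2)\le C(R,T)\,\varepsilon_N^2+C(T,\xi_0)/R$; choosing $R$ large and then $N,M$ large shows $\{\xi_N\}$ is Cauchy, so it converges to some $\xi\in L^2(\Omega;C([0,T];\rbb^2))$, with $\xi=\xi_0$ on $(-\infty,0]$ and $\xi$ adapted.

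\smallskip\noindent\emph{Identifying the limit and uniqueness.} Passing to the limit along an a.s.\ uniformly convergent subsequence in the integral identity for $v_N$, the drift terms converge by continuity of $U'$ and local boundedness, and the memory term converges by dominated convergence after the same split $\int_{-N}^r=\int_{-N}^0+\int_0^r$: the truncation error $\int_{-\infty}^{-N}$ vanishes because $\xi_0\in\C_{K,p_1}(-\infty,0]$ gives $\int_{-\infty}^0(|x_0(s)|^{p_1}+1)K(-s)\,\d s<\infty$, and this same finiteness makes the limiting integral $\int_{-\infty}^{r}\close H(x(r)-x(s))K(r-s)\,\d s$ absolutely convergent. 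Thus $\xi$ satisfies Definition~\ref{def:solution} on $(-\infty,T]$. For uniqueness, two solutions with the same initial past again have difference solving a noise-free ODE vanishing on $(-\infty,0]$; the identical localization-plus-Gr\"onwall argument (now with $\varepsilon_N$ replaced by $0$) forces the difference to vanish on $[0,T\mi\sigma_R]$ for every $R$, and since $\sigma_R\uparrow\infty$ (both solutions being continuous on $[0,T]$) one concludes $\xi=\tilde\xi$ on $[0,T]$. I expect the main obstacle to be precisely the interplay of the non-Lipschitz polynomial growth of $H,H'$ with the unboundedness of $x_0$: the localization $\sigma_R$ makes the nonlinearities effectively Lipschitz on the relevant event, the decomposition of the memory integral isolates the harmless $(-\infty,0]$ part whose size is governed by $\|x_0\|_{K,p_1}$ and the decay of $K$, and the passage $R\to\infty$ is powered by the uniform-in-$N$ moment bounds inherited from~\eqref{ineq:E.sup.U(x_N)+v_N^2}.
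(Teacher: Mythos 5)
Your proposal is correct and follows essentially the same route as the paper: a Cauchy estimate obtained by decomposing the memory integral into its $(-\infty,0]$ part (controlled by $\|x_0\|_{K,p_1}$), its $[0,r]$ part (handled via Gr\"onwall), and a vanishing truncation tail, all inside a spatial localization, followed by removal of the localization via moment bounds and H\"older, and finally letting the localization parameter $R\to\infty$ before sending $N\to\infty$. The differences from the paper are cosmetic but are worth noting. The paper localizes by working with the auxiliary doubly-truncated Lipschitz system~\eqref{eqn:droplet:int_(-N)^t:U'_R} (with the spatial cutoffs $\theta_R$ and $U'_R$) and relates its solution $\xi_N^R$ to $\xi_N$ via the stopping time $\sigma_N^R$; you instead run the same estimates directly on $\xi_N$ restricted to $[0,t\wedge\sigma_R]$, which avoids introducing the cutoff system but leans on the mean-value-theorem Lipschitz bound of $H$ on bounded sets (equivalent in substance). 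You also handle the uniqueness step more economically: rather than controlling the complementary events $\{\sigma^R\le T\}$ via H\"older and moment bounds as in~\eqref{ineq:E.sup.U(x)+v_N^2:well-posed}, you observe that for fixed continuous solutions $\sigma_R\uparrow\infty$ pathwise, so exhausting a countable family of $R$'s already gives $\xi=\tilde\xi$ on $[0,T]$ almost surely; this is simpler and correct. Finally, you spell out the passage-to-the-limit step (identifying the limit as a solution via a.s.\ uniform convergence along a subsequence and dominated convergence in the memory integral), which the paper leaves implicit; this is a reasonable thing to make explicit and your dominated-convergence argument, using $K(r-s)\le K(-s)$ and $\|x_0\|_{K,p_1}<\infty$ as the dominating integrable bound, is sound.
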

\begin{proof}
It suffices to prove that the projection of $\xi_N$ on $[0,T]$ is Cauchy in $C([0,T];\rbb^2)$. The argument follows the proof of \cite[Lemma A.3]{herzog2023gibbsian} tailored to our setting. 

Fixing $R>0$ and $-N_1<-N_2<0$, we let $\xi_{N_1}^R$ and $\xi_{N_2}^R$ be the solutions of~\eqref{eqn:droplet:int_(-N)^t:U'_R} with the integral truncations at $N_1$ and $N_2$, respectively, and $\xibar^R=\xi^R_{N_1}-\xi^R_{N_2}$. We observe that $\xibar^R$ obeys the following equation with $\pi_{(-\infty,0]}\xibar=0$:
\begin{align*}
\tfrac{\d}{\d t}\xbar^R(t)&=\vbar^R(t),\\
\tfrac{\d}{\d t} \vbar^R(t)& = -\vbar^R(t)-\big[U'_R(x^R_{N_1}(t))-U'_R(x^R_{N_2}(t))\big]\\
&\qquad -\int_0^t \Big[\theta_R(x^R_{N_1}(t)) \theta_R(x^R_{N_1}(s)) H\big(x^R_{N_1}(t)-x^R_{N_1}(s)\big)\\
&\qquad\qquad\qquad-\theta_R(x^R_{N_2}(t))\theta_R(x^R_{N_2}(s))  H\big(x^R_{N_2}(t)-x^R_{N_2}(s)\big)\Big]K(t-s)\d s\\
&\qquad-\int_{-N_2}^0 \Big[\theta_R(x^R_{N_1}(t))H\big( x^R_{N_1}(t)-x^R_{N_1}(s)  \big) -\theta_R(x^R_{N_2}(t))H\big( x^R_{N_2}(t)-x^R_{N_2}(s)  \big)   \Big]K(t-s)\d s \\
&\qquad -\theta_R(x^R_{N_1}(t))\int_{-N_1}^{-N_2}\close H(x^R_{N_1}(t)-x^R_{N_1}(s))K(t-s)\d s\\
&\equiv-\vbar^R(t)-\big[U'_R(x^R_{N_1}(t))-U'_R(x^R_{N_2}(t))\big]-I_1(t)-I_2(t)-I_3(t).
\end{align*}
Since $U'_R$ is Lipschitz, we infer the bound
\begin{align*}
\tfrac{\d}{\d t}\big(|\xbar^R(t)|+|\vbar^R(t)|\big)&\le c\big(|\xbar^R(t)|+|\vbar^R(t)|\big)+|I_1(t)|+|I_2(t)|+|I_3(t)|.
\end{align*}
To estimate $I_1$, using the condition~\eqref{cond:H:1} on $H'$ and the mean value theorem, we note that the function $\theta_R(x)\theta_R(y)H(x-y)$ is Lipschitz, i.e.,
\begin{align*}
&\big|\theta_R(x)\theta_R(y)H(x-y)-\theta_R(\tilde{x})\theta_R(\tilde{y})H(\tilde{x}-\tilde{y})\big| \le c(|x-\tilde{x}|+|y-\tilde{y}|),
\end{align*}
for some positive constant $c=c(R)$. As a consequence,
\begin{align*}
|I_1(t)|&\le c\int_{0}^t \close \big(|\xbar^R(t)|+|\xbar^R(s)|)K(t-s)\d s\\
&\le c\|K\|_{L(\rbb^+)}\sup_{0\le r\le t}|\xbar^R(r)|.
\end{align*}
To estimate $I_2$, we observe that, in view of condition~\eqref{cond:H:1},
\begin{align*}
\big|\theta_R(x)H(x-y)-\theta_R(\tilde{x})H(\tilde{x}-y)\big| \le c(1+|y|^{p_1})|x-\tilde{x}|,
\end{align*}
where $c=c(R)$ is a positive constant. We then estimate
\begin{align*}
|I_2(t)|
&\leq\int_{-N_2}^0 \Big|\theta_R(x^R_{N_1}(t))H\big( x^R_{N_1}(t)-x_0(s)  \big) -\theta_R(x^R_{N_2}(t))H\big( x^R_{N_2}(t)-x_0(s)  \big)   \Big|K(t-s)\d s  \\
&\le c|\xbar^R(t)|\int_{-N_2}^0|x_0(s)|^{p_1}K(t-s)\d s\\
&\le c|\xbar^R(t)|\cdot\|x_0\|_{K,p_1}.
\end{align*}
Concerning $I_3$, we invoke condition~\eqref{cond:H:1} on $H$ to deduce that
\begin{align*}
|I_3(t)|&\le \int_{-N_1}^{-N_2}\Big| H(x^R_{N_1}(t)-x^R_{N_1}(s) )\Big|K(t-s)\d s\\
&\le a_H \int_{-N_1}^{-N_2}\close K(t-s)\d s\big(1+ |x^R_{N_1}(t)|^{p_1}\big) + a_H\int_{-N_1}^{-N_2} \close |x_0(s)|^{p_1}|K(t-s)\d s\\
&\le a_H \int_{N_2}^{N_1}\close K(s)\d s\big(1+ |x^R_{N_1}(t)|^{p_1}\big) + a_H\int_{-N_1}^{-N_2} \close |x_0(s)|^{p_1}|K(-s)\d s.
\end{align*}
In the above, $a_H$ is the constant in~\eqref{cond:H:1}. Altogether, we arrive at the estimate
\begin{align*}
&\sup_{0\le r\le t} \big(|\xbar^R(r)|+|\vbar^R(r)|\big)\\
&\le c\int_0^t \sup_{0\le s\le r} \big(|\xbar^R(s)|+|\vbar^R(s)|\big)\d r\\
&\qquad+  a_Ht \int_{N_2}^{N_1}\close K(s)\d s\Big[1+\sup_{0\le r\le t} |x^R_{N_1}(r)|^{p_1}\Big] + a_Ht\int_{-N_1}^{-N_2} \close |x_0(s)|^{p_1}K(-s)\d s.
\end{align*}
Invoking Gr\"{o}nwall's inequality yields the bound in expectation
\begin{align*}
&\E\sup_{0\le r\le t} |\xbar^R(r)|+|\vbar^R(r)|\\
 &\le a_Ht\Big( \int_{N_2}^{N_1}\close K(s)\d s\Big[1+\E\sup_{0\le r\le t} |x^R_{N_1}(r)|^{p_1}\Big] + \int_{-N_1}^{-N_2} \close |x_0(s)|^{p_1}K(-s)\d s\Big) e^{ct},
\end{align*}
where $c=c(R)>0$ does not depend on $N_1,N_2$. In light of~\eqref{ineq:E.sup.(x_N^R)^2+(v_N^R)^2}, we further deduce that
\begin{align}
&\E\sup_{0\le r\le t} |\xbar^R(r)|+|\vbar^R(r)|\notag
 \\
 &\le c\Big( \int_{N_2}^{N_1}\close K(s)\d s + \int_{-N_1}^{-N_2} \close |x_0(s)|^{p_1}K(-s)\d s\Big) ,\label{ineq:xbar^R+vbar^R}
\end{align}
where $c=c(\xi_0(0),t,R)$. 

Next, setting $\xibar=\xi_{N_1}-\xi_{N_2}$, we have the following chain of implications:
\begin{equation} \label{ineq:a1}
\begin{aligned}
\E \sup_{0\le t\le T} |\xbar(t)|+|\vbar(t)|& \le \E\Big[ \mathbf{1}\{ \sigma^R_{N_1}\mi\sigma^R_{N_2}>T\} \sup_{0\le t\le T} |\xbar(t)|+|\vbar(t)| \Big]\\
&\quad +  \E\Big[ \mathbf{1}\{ \sigma^R_{N_1}\le T\} \sup_{0\le t\le T} |\xbar(t)|+|\vbar(t)| \Big]\\
&\quad + \E\Big[ \mathbf{1}\{ \sigma^R_{N_2}\le T\} \sup_{0\le t\le T} |\xbar(t)|+|\vbar(t)| \Big],
\end{aligned}
\end{equation}
where $\sigma^R_{N_1}$ and $\sigma^R_{N_2}$ are the stopping times defined in~\eqref{form:sigma^R_N}. In view of~\eqref{ineq:xbar^R+vbar^R}, we readily have
\begin{align}
&\E\Big[ \mathbf{1}\{ \sigma^R_{N_1}\mi\sigma^R_{N_2}>T\} \sup_{0\le t\le T} |\xbar(t)|+|\vbar(t)| \Big]\notag\\
&=\E\Big[ \mathbf{1}\{ \sigma^R_{N_1}\mi\sigma^R_{N_2}>T\} \sup_{0\le t\le T} |\xbar^R(t)|+|\vbar^R(t)| \Big]\notag\\
&\le \E\Big[\sup_{0\le t\le T} |\xbar^R(t)|+|\vbar^R(t)| \Big]\notag\\
&\le c\Big( \int_{N_2}^{N_1}\close K(s)\d s + \int_{-N_1}^{-N_2} \close |x_0(s)|^{p_1}K(-s)\d s\Big),\label{ineq:a2}
\end{align}
where $c=c(\xi(0),T,R)$ does not depend on $N_1,N_2$. We combine~\eqref{ineq:E.sup.U(x_N)+v_N^2}--\eqref{ineq:sigma^R_N} and use H\"{o}lder's inequality to estimate
\begin{align}
 &\E\Big[ \mathbf{1}\{ \sigma^R_{N_1}\le T\} \sup_{0\le t\le T} |\xbar(t)|+|\vbar(t)| \Big]\notag\\
 &\le \P(\sigma^R_{N_1}\le T)^{1/2}\Big(\E\sup_{0\le t\le T} x_{N_1}(t)^2+v_{N_1}(t)^2+x_{N_2}(t)^2+v_{N_2}(t)^2\Big)^{1/2}\le \frac{c(\xi(0),T)}{\sqrt{R}}.\label{ineq:a3}
\end{align}
Likewise,
\begin{align}
\E\Big[ \mathbf{1}\{ \sigma^R_{N_2}\le T\} \sup_{0\le t\le T} |\xbar(t)|+|\vbar(t)| \Big]\le \frac{c(\xi(0),T)}{\sqrt{R}}.\label{ineq:a4}
\end{align}
We now collect every estimate in~\eqref{ineq:a1}--\eqref{ineq:a4} to arrive at the bound
\begin{align*}
&\E \sup_{0\le t\le T} |\xbar(t)|+|\vbar(t)|\\
& \le \frac{c}{\sqrt{R}}+c(R)\Big( \int_{N_2}^{N_1}\close K(s)\d s + \int_{-N_1}^{-N_2} \close |x_0(s)|^{p_1}K(-s)\d s\Big).
\end{align*}
Recall that $K$ is integrable and that $\xi_0\in\C_{K,p_1}(-\infty,0]$ where $\C_{K,p_1}(-\infty,0]$ is as in~\eqref{form:C_K}. It is clear that $\{\xi_N\}$ is a Cauchy sequence in $C([0,T];\rbb^2)$ by first taking $R$ sufficiently large and then sending $N_1$ and $N_2$ to infinity. As a consequence, there exists a solution $\xi$ for~\eqref{eqn:droplet} with the initial condition $\xi_0\in\C_{K,p_1}(-\infty,0]$. 

We finish by establishing the uniqueness of $\xi$. To this end, we first note that $\xi$ satisfies an energy estimate similar to~\eqref{ineq:E.sup.U(x_N)+v_N^2}:
\begin{equation} \label{ineq:E.sup.U(x)+v_N^2:well-posed}
\E\sup_{0\le r\le t}\Phi(x(r),v(r)) \le \Big(\Phi(\xi(0))+t\|x_0\|^2_{K,p_1}\Big)e^{ct}, \quad t\ge 0.
\end{equation}
Let $\widetilde{\xi}$ solve~\eqref{eqn:droplet} with the same initial path $\xi_0$. We aim to prove that $\xi$ and $\xitilde$ must agree a.s. in $[0,t]$. To see this, consider the stopping times $\sigma^R$ and $\sigmatilde^R$ associated with $\xi$ and $\xitilde$, respectively, and denote $\xihat=\xi-\xitilde$. We observe that for $0\le t\le \sigma^R\wedge\sigmatilde^R$, $\xi$ and $\xitilde$ both solve equation~\eqref{eqn:droplet:int_(-N)^t:U'_R} so, by the uniqueness of the solution of~\eqref{eqn:droplet:int_(-N)^t:U'_R}, 
\begin{align*}
\P(0\le t\le \sigma^R\wedge\sigmatilde^R, \xi(t)=\xitilde(t) )=1.
\end{align*}
As a consequence, 
\begin{align*}
\E \bigg[\mathbf{1}\{\sigma^R\wedge\sigmatilde^R\ge t\}\sup_{0\le r\le t} |\xhat(r)|+|\vhat(r)|\bigg]=0.
\end{align*}
On the other hand, similar to estimates~\eqref{ineq:a3}-\eqref{ineq:a4}, we invoke~\eqref{ineq:E.sup.U(x)+v_N^2:well-posed} to see that
 \begin{align*}
 \E \bigg[\Big(\mathbf{1}\{\sigma^R\le  t\}+\mathbf{1}\{\sigmatilde^R\le  t\}\Big)\sup_{0\le r\le t} |\xhat(r)|+|\vhat(r)|\bigg]&\le \frac{c(T)}{\sqrt{R}},
 \end{align*}
 whence
\begin{align*}
\E \sup_{0\le r\le t} |\xhat(r)|+|\vhat(r)| \le  \frac{c(T)}{\sqrt{R}}.
\end{align*}
Since $R$ can be made arbitrarily large, we conclude that \begin{align*}
\E \sup_{0\le r\le t} |\xhat(r)|+|\vhat(r)|=0,
\end{align*} which completes the proof.
\end{proof}

\bibliographystyle{abbrv}
\bibliography{ARFMBib}

\begin{thebibliography}{10}

\bibitem{bakhtin2005stationary}
Y.~Bakhtin and J.~C. Mattingly.
\newblock Stationary solutions of stochastic differential equations with memory
  and stochastic partial differential equations.
\newblock {\em Commun. Contemp. Math}, 7(05):553--582, 2005.

\bibitem{Barnes2020}
L.~Barnes, G.~Pucci, and A.~U. Oza.
\newblock Resonant interactions in bouncing droplet chains.
\newblock {\em Comptes Rendus M\'{e}canique}, 348(6-7):573--589, 2020.

\bibitem{bass1998diffusions}
R.~F. Bass.
\newblock {\em Diffusions and Elliptic Operators}.
\newblock Springer Science \& Business Media, 1998.

\bibitem{bonaccorsi2012asymptotic}
S.~Bonaccorsi, G.~Da~Prato, and L.~Tubaro.
\newblock Asymptotic behavior of a class of nonlinear stochastic heat equations
  with memory effects.
\newblock {\em SIAM J. Math. Anal.}, 44(3):1562--1587, 2012.

\bibitem{bonaccorsi2004large}
S.~Bonaccorsi and M.~Fantozzi.
\newblock {Large deviation principle for semilinear stochastic Volterra
  equations}.
\newblock {\em Dyn. Syst. Appl.}, 13:203--220, 2004.

\bibitem{bonaccorsi2006infinite}
S.~Bonaccorsi and M.~Fantozzi.
\newblock {Infinite dimensional stochastic Volterra equations with dissipative
  nonlinearity}.
\newblock {\em Dyn. Syst. Appl.}, 15(3/4):465, 2006.

\bibitem{Budanur2019}
N.~B. Budanur and M.~Fleury.
\newblock State space geometry of the chaotic pilot-wave hydrodynamics.
\newblock {\em Chaos}, 29(013122), 2019.

\bibitem{Bush2015a}
J.~W.~M. Bush.
\newblock Pilot-wave hydrodynamics.
\newblock {\em Ann. Rev. Fluid Mech.}, 47, 2015.

\bibitem{Bush2018Chaos}
J.~W.~M. Bush, Y.~Couder, T.~Gilet, P.~A. Milewski, and A.~Nachbin.
\newblock Introduction to focus issue on hydrodynamic quantum analogues.
\newblock {\em Chaos}, 28(096001), 2018.

\bibitem{BushOzaROPP}
J.~W.~M. Bush and A.~U. Oza.
\newblock Hydrodynamic quantum analogs.
\newblock {\em Reports on Progress in Physics}, 84(017001), 2021.

\bibitem{cerrai2020convergence}
S.~Cerrai and N.~Glatt-Holtz.
\newblock On the convergence of stationary solutions in the
  smoluchowski-kramers approximation of infinite dimensional systems.
\newblock {\em J. Funct. Anal.}, 278(8):108421, 2020.

\bibitem{clement1996some}
P.~Cl{\'e}ment and G.~Da~Prato.
\newblock {Some results on stochastic convolutions arising in Volterra
  equations perturbed by noise}.
\newblock {\em Atti Accad. Naz. Lincei Cl. Sci. Fis. Mat. Natur. Rend. Lincei
  Mat. Appl.}, 7(3):147--153, 1996.

\bibitem{clement1997white}
P.~Cl{\'e}ment and G.~Da~Prato.
\newblock White noise perturbation of the heat equation in materials with
  memory.
\newblock {\em Dynam. Systems Appl.}, 6:441--460, 1997.

\bibitem{clement1998white}
P.~Cl{\'e}ment, G.~Da~Prato, and J.~Pr{\"u}ss.
\newblock White noise perturbation of the equations of linear parabolic
  viscoelasticity.
\newblock {\em Rend. Istit. Mat. Univ. Trieste}, 29:207--220, 1998.

\bibitem{cornfeld2012ergodic}
I.~P. Cornfeld, S.~V. Fomin, and Y.~G. Sinai.
\newblock {\em {Ergodic Theory}}, volume 245.
\newblock Springer Science \& Business Media, 2012.

\bibitem{Couchman2020}
M.~M.~P. Couchman and J.~W.~M. Bush.
\newblock Free rings of bouncing droplets: stability and dynamics.
\newblock {\em J. Fluid Mech.}, 903(A49), 2020.

\bibitem{Couchman2022}
M.~M.~P. Couchman, D.~J. Evans, and J.~W.~M. Bush.
\newblock The stability of a hydrodynamic bravais lattice.
\newblock {\em Symmetry}, 14(1524), 2022.

\bibitem{Couchman2019}
M.~M.~P. Couchman, S.~E. Turton, and J.~W.~M. Bush.
\newblock Bouncing phase variations in pilot-wave hydrodynamics and the
  stability of droplet pairs.
\newblock {\em J. Fluid Mech.}, 871:212--243, 2019.

\bibitem{Couder2006}
Y.~Couder and E.~Fort.
\newblock Single particle diffraction and interference at a macroscopic scale.
\newblock {\em Phys. Rev. Lett.}, 97(154101), 2006.

\bibitem{Couder2005a}
Y.~Couder, S.~Proti\`{e}re, E.~Fort, and A.~Boudaoud.
\newblock Walking and orbiting droplets.
\newblock {\em Nature}, 437(208), 2005.

\bibitem{Cristea2018}
T.~Cristea-Platon, P.~J. S\'{a}enz, and J.~W.~M. Bush.
\newblock Walking droplets in a circular corral: {Q}uantisation and chaos.
\newblock {\em Chaos}, 28(096116), 2018.

\bibitem{Durey2017}
M.~Durey and P.~A. Milewski.
\newblock Faraday wave-droplet dynamics: Discrete-time analysis.
\newblock {\em J. Fluid Mech.}, 821:296--329, 2017.

\bibitem{Durey2018}
M.~Durey, P.~A. Milewski, and J.~W.~M. Bush.
\newblock Dynamics, emergent statistics and the mean-pilot-wave potential of
  walking droplets.
\newblock {\em Chaos}, 28(096108), 2018.

\bibitem{Durey2020_2}
M.~Durey, P.~A. Milewski, and Z.~Wang.
\newblock Faraday pilot-wave dynamics in a circular corral.
\newblock {\em Journal of Fluid Mechanics}, 891(A3), 2020.

\bibitem{Durey2020}
M.~Durey, S.~E. Turton, and J.~W.~M. Bush.
\newblock Speed oscillations in classical pilot-wave dynamics.
\newblock {\em Proc. Roy. Soc. A}, 476(20190884), 2020.

\bibitem{weinan2002gibbsian}
W.~E and D.~Liu.
\newblock {Gibbsian dynamics and invariant measures for stochastic dissipative
  PDEs}.
\newblock {\em J. Stat. Phys.}, 108(5-6):1125--1156, 2002.

\bibitem{weinan2001gibbsian}
W.~E, J.~C. Mattingly, and Y.~Sinai.
\newblock {Gibbsian Dynamics and Ergodicity for the Stochastically Forced
  Navier--Stokes Equation}.
\newblock {\em Commun. Math. Phys.}, 224(1):83--106, 2001.

\bibitem{Eddi2009b}
A.~Eddi, E.~Fort, F.~Moisy, and Y.~Couder.
\newblock Unpredictable tunneling of a classical wave-particle association.
\newblock {\em Phys. Rev. Lett.}, 102(240401), 2009.

\bibitem{Eddi2012}
A.~Eddi, J.~Moukhtar, S.~Perrard, E.~Fort, and Y.~Couder.
\newblock Level splitting at a macroscopic scale.
\newblock {\em Phys. Rev. Lett.}, 108(264503), 2012.

\bibitem{Eddi2011a}
A.~Eddi, E.~Sultan, J.~Moukhtar, E.~Fort, M.~Rossi, and Y.~Couder.
\newblock Information stored in {F}araday waves: the origin of a path memory.
\newblock {\em J. Fluid Mech.}, 674:433--463, 2011.

\bibitem{Faraday1831}
M.~Faraday.
\newblock On the forms and states of fluids on vibrating elastic surfaces.
\newblock {\em Philos. Trans. R. Soc. London}, 121:319--340, 1831.

\bibitem{Fort2010}
E.~Fort, A.~Eddi, J.~Moukhtar, A.~Boudaoud, and Y.~Couder.
\newblock Path-memory induced quantization of classical orbits.
\newblock {\em Proc. Natl. Acad. Sci.}, 107(41):17515--17520, 2010.

\bibitem{Gilet2014}
T.~Gilet.
\newblock Dynamics and statistics of wave-particle interactions in a confined
  geometry.
\newblock {\em Phys. Rev. E}, 90(052917), 2014.

\bibitem{Gilet2016}
T.~Gilet.
\newblock Quantumlike statistics of deterministic wave-particle interactions in
  a circular cavity.
\newblock {\em Phys. Rev. E}, 93(042202), 2016.

\bibitem{glatt2017unique}
N.~Glatt-Holtz, J.~C. Mattingly, and G.~Richards.
\newblock On unique ergodicity in nonlinear stochastic partial differential
  equations.
\newblock {\em J. Stat. Phys.}, 166(3-4):618--649, 2017.

\bibitem{glatt2020generalized}
N.~E. Glatt-Holtz, D.~P. Herzog, S.~A. McKinley, and H.~D. Nguyen.
\newblock {The generalized Langevin equation with power-law memory in a
  nonlinear potential well}.
\newblock {\em Nonlinearity}, 33(6):2820, 2020.

\bibitem{hairer2011asymptotic}
M.~Hairer, J.~C. Mattingly, and M.~Scheutzow.
\newblock {Asymptotic coupling and a general form of Harris' theorem with
  applications to stochastic delay equations}.
\newblock {\em Probab. Theory Relat. Fields}, 149(1):223--259, 2011.

\bibitem{Harris2014a}
D.~M. Harris and J.~W.~M. Bush.
\newblock Drops walking in a rotating frame: From quantized orbits to
  multimodal statistics.
\newblock {\em J. Fluid Mech.}, 739:444--464, 2014.

\bibitem{Harris2013a}
D.~M. Harris, J.~Moukhtar, E.~Fort, Y.~Couder, and J.~W.~M. Bush.
\newblock Wavelike statistics from pilot-wave dynamics in a circular corral.
\newblock {\em Phys. Rev. E}, 88(011001):1--5, 2013.

\bibitem{herzog2017ergodicity}
D.~P. Herzog and J.~C. Mattingly.
\newblock {Ergodicity and Lyapunov functions for Langevin dynamics with
  singular potentials}.
\newblock {\em Commun. Pure Appl. Math.}, 72(10):2231--2255, 2019.

\bibitem{herzog2023gibbsian}
D.~P. Herzog, J.~C. Mattingly, and H.~D. Nguyen.
\newblock {Gibbsian dynamics and the generalized Langevin equation}.
\newblock {\em Electron. J. Probab.}, 28:1--29, 2023.

\bibitem{ito1954stationary}
K.~It{\^o}.
\newblock Stationary random distributions.
\newblock {\em Mem. Fac. Sci., Kyoto Univ. Ser. Math.}, 28(3):209--223, 1954.

\bibitem{ito1964stationary}
K.~It{\^o} and M.~Nisio.
\newblock On stationary solutions of a stochastic differential equation.
\newblock {\em J. Math. Kyoto Univ.}, 4(3):1--75, 1964.

\bibitem{karatzas2012brownian}
I.~Karatzas and S.~Shreve.
\newblock {\em {Brownian Motion and Stochastic Calculus}}, volume 113.
\newblock Springer Science \& Business Media, 2012.

\bibitem{Kurianski2017}
K.~M. Kurianski, A.~U. Oza, and J.~W.~M. Bush.
\newblock Simulations of pilot-wave dynamics in a simple harmonic potential.
\newblock {\em Phys. Rev. Fluids}, 2(113602), 2017.

\bibitem{Labousse2016a}
M.~Labousse, A.~U. Oza, S.~Perrard, and J.~W.~M. Bush.
\newblock Pilot-wave dynamics in a harmonic potential: Quantization and
  stability of circular orbits.
\newblock {\em Phys. Rev. E}, 93(033122), 2016.

\bibitem{Labousse2014a}
M.~Labousse, S.~Perrard, Y.~Couder, and E.~Fort.
\newblock Build-up of macroscopic eigenstates in a memory-based constrained
  system.
\newblock {\em New Journal of Physics}, 16(113027), 2014.

\bibitem{mattingly2002exponential}
J.~C. Mattingly.
\newblock {Exponential convergence for the stochastically forced Navier-Stokes
  equations and other partially dissipative dynamics}.
\newblock {\em Commun. Math. Phys.}, 230(3):421--462, 2002.

\bibitem{mattingly2012geometric}
J.~C. Mattingly, S.~A. McKinley, and N.~S. Pillai.
\newblock Geometric ergodicity of a bead--spring pair with stochastic {S}tokes
  forcing.
\newblock {\em Stoch. Process. Their Appl.}, 122(12):3953--3979, 2012.

\bibitem{mattingly2002ergodicity}
J.~C. Mattingly, A.~M. Stuart, and D.~J. Higham.
\newblock {Ergodicity for SDEs and approximations: locally Lipschitz vector
  fields and degenerate noise}.
\newblock {\em Stoch. Process. Their Appl.}, 101(2):185--232, 2002.

\bibitem{Molacek2013b}
J.~Mol\'{a}\v{c}ek and J.~W.~M. Bush.
\newblock Drops walking on a vibrating bath: towards a hydrodynamic pilot-wave
  theory.
\newblock {\em J. Fluid Mech.}, 727:612--647, 2013.

\bibitem{Montes2021}
J.~Montes, F.~Revuelta, and F.~Borondo.
\newblock Bohr-{S}ommerfeld-like quantization in the theory of walking
  droplets.
\newblock {\em Phys. Rev. E}, 103(053110), 2021.

\bibitem{mori1965continued}
H.~Mori.
\newblock A continued-fraction representation of the time-correlation
  functions.
\newblock {\em Prog. Theor. Phys.}, 34(3):399--416, 1965.

\bibitem{nguyen2022ergodicity}
H.~D. Nguyen.
\newblock Ergodicity of a nonlinear stochastic reaction-diffusion equation with
  memory.
\newblock {\em to appear in Stoch. Process. Their Appl., arXiv preprint
  arXiv:2203.03076}, 2022.

\bibitem{oksendal2013stochastic}
B.~{\O}ksendal.
\newblock {\em {Stochastic Differential Equations: an Introduction with
  Applications}}.
\newblock Springer Science \& Business Media, 2013.

\bibitem{ottobre2011asymptotic}
M.~Ottobre and G.~A. Pavliotis.
\newblock {Asymptotic analysis for the generalized Langevin equation}.
\newblock {\em Nonlinearity}, 24(5):1629, 2011.

\bibitem{Oza2014a}
A.~U. Oza, D.~M. Harris, R.~R. Rosales, and J.~W.~M. Bush.
\newblock Pilot-wave dynamics in a rotating frame: on the emergence of orbital
  quantization.
\newblock {\em J. Fluid Mech.}, 744:404--429, 2014.

\bibitem{Oza2013}
A.~U. Oza, R.~R. Rosales, and J.~W.~M. Bush.
\newblock A trajectory equation for walking droplets: hydrodynamic pilot-wave
  theory.
\newblock {\em J. Fluid Mech.}, 737:552--570, 2013.

\bibitem{Oza2014b}
A.~U. Oza, {\O}.~Wind-Willassen, D.~M. Harris, R.~R. Rosales, and J.~W.~M.
  Bush.
\newblock Pilot-wave hydrodynamics in a rotating frame: Exotic orbits.
\newblock {\em Phys. Fluids}, 26(082101), 2014.

\bibitem{pavliotis2014stochastic}
G.~A. Pavliotis.
\newblock {\em {Stochastic Processes and Applications: Diffusion Processes, the
  Fokker-Planck and Langevin Equations}}, volume~60.
\newblock Springer, 2014.

\bibitem{Perrard2018}
S.~Perrard and M.~Labousse.
\newblock Transition to chaos in wave memory dynamics in a harmonic well:
  Deterministic and noise-driven behavior.
\newblock {\em Chaos}, 28(096109), 2018.

\bibitem{Perrard2014a}
S.~Perrard, M.~Labousse, E.~Fort, and Y.~Couder.
\newblock Chaos driven by interfering memory.
\newblock {\em Phys. Rev. Lett.}, 113(104101), 2014.

\bibitem{Perrard2014}
S.~Perrard, M.~Labousse, M.~Miskin, E.~Fort, and Y.~Couder.
\newblock Self-organization into quantized eigenstates of a classical
  wave-driven particle.
\newblock {\em Nat. Commun.}, 5(3219), 2014.

\bibitem{Protiere2005}
S.~Proti\`{e}re, Y.~Couder, E.~Fort, and A.~Boudaoud.
\newblock The self-organization of capillary wave sources.
\newblock {\em J. Phys.: Condens. Matter}, 17(3529), 2005.

\bibitem{Rahman2018}
A.~Rahman.
\newblock Standard map-like models for single and multiple walkers in an
  annular cavity.
\newblock {\em Chaos}, 28(096102), 2018.

\bibitem{Rahman2020b}
A.~Rahman and D.~Blackmore.
\newblock Walking droplets through the lens of dynamical systems.
\newblock {\em Modern Physics Letters B}, 34(34, 2030009), 2020.

\bibitem{Saenz2018}
P.~J. S\'{a}enz, T.~Cristea-Platon, and J.~W.~M. Bush.
\newblock Statistical projection effects in a hydrodynamic pilot-wave system.
\newblock {\em Nature Physics}, 14:315--319, 2018.

\bibitem{Saenz2019a}
P.~J. S\'{a}enz, T.~Cristea-Platon, and J.~W.~M. Bush.
\newblock A hydrodynamic analog of {F}riedel oscillations.
\newblock {\em Science Advances}, 6(20), 2020.

\bibitem{Tadrist2019}
L.~Tadrist, T.~Gilet, P.~Schlagheck, and J.~W.~M. Bush.
\newblock Predictability in a hydrodynamic pilot-wave system: Resolution of
  walker tunneling.
\newblock {\em Phys. Rev. E}, 102(013104), 2020.

\bibitem{Tambasco2018a}
L.~D. Tambasco and J.~W.~M. Bush.
\newblock Exploring orbital dynamics and trapping with a generalized pilot-wave
  framework.
\newblock {\em Chaos}, 28(096115), 2018.

\bibitem{Thomson2019}
S.~J. Thomson, M.~M.~P. Couchman, and J.~W.~M. Bush.
\newblock Collective vibrations of confined levitating droplets.
\newblock {\em Phys. Rev. Fluids}, 5(083601), 2020.

\bibitem{Thomson2020}
S.~J. Thomson, M.~Durey, and R.~R. Rosales.
\newblock Collective vibrations of a hydrodynamic active lattice.
\newblock {\em Proc. Roy. Soc. A}, 476(20200155), 2020.

\bibitem{Thomson2021}
S.~J. Thomson, M.~Durey, and R.~R. Rosales.
\newblock Discrete and periodic complex {G}inzburg-{L}andau equation for a
  hydrodynamic active lattice.
\newblock {\em Phys. Rev. E}, 103(062215), 2021.

\bibitem{Turton2018}
S.~E. Turton, M.~M.~P. Couchman, and J.~W.~M. Bush.
\newblock A review of the theoretical modeling of walking droplets: towards a
  generalized pilot-wave framework.
\newblock {\em Chaos}, 28(096111), 2018.

\bibitem{Valani2022}
R.~Valani.
\newblock Anomalous transport of a classical wave-particle entity in a tilted
  potential.
\newblock {\em Phys. Rev. E}, 105(L012101), 2022.

\bibitem{Valani2018}
R.~Valani and A.~C. Slim.
\newblock Pilot-wave dynamics of two identical, in-phase bouncing droplets.
\newblock {\em Chaos}, 28(096114), 2018.

\bibitem{viana2016foundations}
M.~Viana and K.~Oliveira.
\newblock {\em Foundations of Ergodic Theory}.
\newblock Number 151. Cambridge University Press, 2016.

\bibitem{Wind2013}
{\O}.~Wind-Willassen, J.~Mol\'{a}\v{c}ek, D.~M. Harris, and J.~W.~M. Bush.
\newblock Exotic states of bouncing and walking droplets.
\newblock {\em Phys. Fluids}, 25(082002):1--11, 2013.

\bibitem{zwanzig2001nonequilibrium}
R.~Zwanzig.
\newblock {\em {Nonequilibrium Statistical Mechanics}}.
\newblock Oxford University Press, 2001.

\end{thebibliography}

\end{document}